\tikzset{snake it/.style={decorate, decoration=snake}}
\pgfplotsset{compat=1.18}
\newcommand*{\rom}[1]{\expandafter\@slowromancap\romannumeral #1@}
\numberwithin{equation}{section}
\theoremstyle{plain}
\newtheorem{theorem}{Theorem}
\numberwithin{theorem}{section}
\newtheorem{lemma}[theorem]{Lemma}
\newtheorem{corollary}[theorem]{Corollary}
\theoremstyle{definition}
\newtheorem{definition}[theorem]{Definition}
\theoremstyle{remark}
\newtheorem{remark}[theorem]{Remark}
\theoremstyle{remark}
\theoremstyle{remark}
\newcommand{\smo}{\setminus \mathbf{0}}
\newcommand{\norm}[1]{\left\lVert#1\right\rVert}      
\newcommand{\abs}[1]{\left|#1\right|}                 
\newcommand{\paren}[1]{\left(#1\right)}               
\newcommand{\sparen}[1]{\left\{#1\right\}}      
\renewcommand{\d}{\,\mathrm{d}}  
\newcommand{\dd}{\mathrm{d}}  
\newcommand{\Cc}{\mathcal{C}}
\newcommand{\Dc}{\mathcal{D}}
\newcommand{\Ec}{\mathcal{E}}
\newcommand{\Fc}{\mathcal{F}}
\newcommand{\Sc}{\mathcal{S}}
\newcommand{\WF}{\mathrm{WF}}                         
\newcommand{\wf}{\mathrm{WF}}                         
\newcommand{\vb}{\mathbf{b}}
\newcommand{\partyf}[2]{\frac{\partial #2}{\partial y_{#1}}}
\newcommand{\vv}{{\mathbf{v}}}
\newcommand{\bpm}{\begin{pmatrix}}
\newcommand{\epm}{\end{pmatrix}}
\newcommand{\vx}{{\mathbf{x}}}
\newcommand{\vxo}{\mathbf{x}_0}
\newcommand{\vy}{{\mathbf{y}}}
\newcommand{\vs}{\mathbf{s}}
\newcommand{\vd}{\mathbf{d}}
\newcommand{\vxi}{{\boldsymbol{\xi}}}
\newcommand{\vxio}{{\boldsymbol{\xi}_0}}
\newcommand{\vsig}{{\boldsymbol{\sigma}}} 
\newcommand{\rr}{{{\mathbb R}}}
\newcommand{\rn}{{{\mathbb R}^n}}
\newcommand{\drn}{\dot{\mathbb{R}^n}}
\newcommand{\be}{\begin{equation}}
\newcommand{\bea}{\begin{eqnarray}}
\newcommand{\eea}{\end{eqnarray}}
\newcommand{\bean}{\begin{eqnarray*}}
\newcommand{\eean}{\end{eqnarray*}}
\newcommand{\bel}[1]{\begin{equation}\label{#1}}
\newcommand{\ee}{\end{equation}}
\newcommand{\eel}[1]{{\label{#1}\end{equation}}}
\DeclareMathOperator*{\argmin}{arg\,min}
\DeclareMathOperator{\sinc}{sinc}
\title[short]{Microlocal analysis of non-linear operators arising in Compton CT\\{\footnotesize\ddmmyyyydate\today~\currenttime}}
\author{James W. Webber\textsuperscript{$\dagger$}}
\author{Sean Holman\textsuperscript{$\ddagger$}}
\address[James W. Webber (corresponding author)]{Department of Biomedical Engineering, Cleveland Clinic, USA}
\address[Sean Holman]{Department of Mathematics, The University of Manchester, Alan Turing Building, Oxford Road, Manchester M13 9PY, UK}
\email[A1,A2]{webberj5@ccf.org\textsuperscript{$\dagger$}, sean.holman@manchester.ac.uk\textsuperscript{$\ddagger$}}
\providecommand{\keywords}[1]
{
  \small	
  \textbf{\textit{Keywords---}} #1
}
\begin{document}

\begin{abstract}
We present a novel microlocal analysis of a non-linear ray transform, $\mathcal{R}$, arising in Compton Scattering Tomography (CST). Due to attenuation effects in CST, the integral weights depend on the reconstruction target, $f$, which has singularities. Thus, standard linear Fourier Integral Operator (FIO) theory does not apply as the weights are non-smooth. The V-line (or broken ray) transform, $\mathcal{V}$, can be used to model the attenuation of incoming and outgoing rays. Through novel analysis of $\mathcal{V}$, we characterize the location and strength of the singularities of the ray transform weights. In conjunction, we provide new results which quantify the strength of the singularities of distributional products based on the Sobolev order of the individual components. By combining this new theory, our analysis of $\mathcal{V}$, and classical linear FIO theory, we determine the Sobolev order of the singularities of $\mathcal{R}f$. The strongest (lowest Sobolev order) singularities of $\mathcal{R}f$ are shown to correspond to the wavefront set elements of the classical Radon transform applied to $f$, and we use this idea and known results on the Radon transform to prove injectivity results for $\mathcal{R}$. In addition, we present novel reconstruction methods based on our theory, and we validate our results using simulated image reconstructions.
\end{abstract}
\maketitle

\keywords{{\it{\textbf{Keywords}}}} - microlocal analysis, Sobolev estimates, distributional products

\section{Introduction}
In this paper, we present a novel analysis of a non-linear Radon transform, $\mathcal{R}$, which models the intesity of photons in Compton Scattering Tomography (CST). Specifically, we investigate how $\mathcal{R}$ propagates singularities of $f$, and we compare this to the classical linear Radon transform, $R$. In terms of Sobolev scale, the most irregular (i.e., lowest Sobolev order) singularities of $\mathcal{R}f$ are shown to have a 1-1 correspondance with the singularities of $Rf$. We use this result to recover the singularities of $f$ and derive injectivity results for $\mathcal{R}$ when $f$ is a characteristic function supported on a simply connected domain. Through simulated experiments, we validate our theory and also discuss some of the weaknesses of our method that occur in practice.

The literature considers several scanning modalities in CST \cite{norton,NT,pal,me2,rigaud20183d,rigaud20213d,rigaud2021reconstruction,farmer1971new,farmer1974further,holt1985compton,strecker1982scatter}.  A review of such works in provided in \cite{truong2012recent}. The majority of these works in recent years focus on the case when the gamma ray source and detector are uncollimated. The mathematical model for this data is a circular arc transform in 2-D \cite{norton,pal}, and a spindle torus transform in three-dimensions \cite{me2,rigaud20183d}. For example, in \cite{norton}, the authors present a Filtered Backprojection (FBP) type formula for a linear circular arc transform, $\mathcal{I}$. This transform was shown earlier in \cite{Co1963} to be invertible using Fourier series decomposition. However, the authors in \cite{norton} argue that their closed-form solution has better practical benefit as it is easier to implement and less sensitive to noise.

In \cite{NT,pal}, a different variant of circular Radon transform in considered, which we denote $C$. In \cite{NT}, the authors provide an inversion formula for $C$ using Fourier decomposition ideas and formulae provided by Cormack in \cite{Co1963}. An FBP formula for $C$, similar to the classical FBP formula for $R$, is derived in \cite{pal}. Later, in \cite{webberholman,truong2019compton}, $C$ was shown to be closely related to $R$ via a projection map. The authors in \cite{truong2019compton} combine this idea with algorithms developed for the exterior Radon transform \cite{quinto1988tomographic} to derive an inversion method for $C$ on annular domains.

In \cite{farmer1971new,farmer1974further,holt1985compton,strecker1982scatter}, various linear CST modalities are considered, where the photons are contrained to scatter on planes. In this case, the sources are typically collimated to pencil beams. For example, in \cite{farmer1971new}, the authors pair a pencil beam source with a wide-angle collimator on the detector to rapidly measure the electron density along a line profile. Then, multiple parallel lines are pieced together to form a 2-D image. In this case, given the extensive collimation, the density can be measured directly, and no inversion is required (the problem is well-posed). However, the amount of noise and scan time is likely amplified.

In the papers discussed so far, the inverse problem is linear as the effects of attenuation are omitted from the model. This assumption is not ideal, especially for larger and/or more dense material, and can cause artifacts in the reconstruction. We introduce a novel non-linear CST system which addresses attenuation effects, and we provide a rigorous analysis of the singularities of the data, and their order on Sobolev scale.

In \cite{rigaud20183d}, a 3-D CST scanner is proposed whereby $f$ is recovered from its integrals over spindle tori. Here, attenuation is addressed, and the authors derive methods to recover the edges of $f$ using classical microlocal analysis. In order for this to work, the integral weights have to be smooth, which cannot be the case in the non-linear formulation as the weights depend on $f$ and $f$ is non-smooth. To deal with this, the authors convolve $f$ with a smooth kernel. This removes any singularities and one can no longer consider edge recovery. In practice, since the kernel has small radius, the smoothed $f$ exhibits sharp changes in value, and the authors are able to recover the locations where these occur (i.e., an approximation to $\text{ssupp}(f)$) using FBP methods. Our analysis does not require the weights to be smooth in this way, and we derive new theory to address the non-linearity. For our uniqueness results to hold, we convolve with a kernel in a similar vein to \cite{rigaud20183d} and this is designed to model the finite size of the source and detector (i.e., they are not point sources in practice). Our kernel need not be smooth, however (e.g., we consider characteristic functions). This relaxes significantly the assumptions of \cite{rigaud20183d}.

In \cite{rigaud20213d}, the authors consider multiple scattering effects in CST. Specifically, using the theory of linear FIO, it is proven that the higher order (i.e., 2nd order) scatter is smoother on Sobolev scale than the first order scatter. Thus, when the 1st and 2nd order scatter are added together, the first order singularities are most pronounced in the sinogram. Using this theory, and FBP methods, the authors show that the edges of the density can be recovered from the CST data. As in \cite{rigaud20183d}, the authors rely on linear FIO theory to prove their theorems. We consider a non-linear formulation where the weights are non-smooth and depend on $f$.

The remainder of this paper is organized as follows. In section \ref{sect:defns} , we recall some definitions and theorems from the literature that will be needed to prove our results. In section \ref{model}, we present our geometry and physical model and explain how the non-linearity arises. In section \ref{main_thm_section}, we present our main microlocal theory where we address the singularities of $\mathcal{R}f$ and their order on Sobolev scale. We then use this result to prove uniqueness results for $\mathcal{R}$ when $f$ is a characteristic function. In section \ref{simulations}, we present simulated experiments to validate our theory and we show edge map reconstructions of image phantoms.

\section{Definitions}\label{sect:defns} 

In this section, we review some theory from microlocal analysis which will be used in our theorems. We first provide some
notation and definitions.  Let $X$ and $Y$ be open subsets of
{$\mathbb{R}^{n_X}$ and $\mathbb{R}^{n_Y}$, respectively.}  Let $\Dc(X)$ be the space of smooth functions compactly
supported on $X$ with the standard topology and let $\mathcal{D}'(X)$
denote its dual space, the vector space of distributions on $X$.  Let
$\Ec(X)$ be the space of all smooth functions on $X$ with the standard
topology and let $\mathcal{E}'(X)$ denote its dual space, the vector
space of distributions with compact support contained in $X$. Finally,
let $\Sc(\rn)$ be the space of Schwartz functions, that are rapidly
decreasing at $\infty$ along with all derivatives. See \cite{Rudin:FA}
for more information. 


We now list some notation conventions that will be used throughout this paper:
\begin{enumerate}
\item For a function $f$ in the Schwartz space $\Sc(\mathbb{R}^{n_X})$, we {write
\[
\mathcal{F}f(\xi) = \int_{\mathbb{R}^{n_X}} e^{-i x\cdot \xi}f(x)\ \dd x,\quad \mathcal{F}^{-1}f(x) = \frac{1}{(2 \pi)^{n_x}}\int_{\mathbb{R}^{n_X}} e^{i x\cdot \xi}f(\xi)\ \dd \xi
\]
for} the Fourier transform and inverse Fourier transform of $f$,
respectively, {and extend these in the usual way to tempered distributions $\Sc'(\mathbb{R}^{n_x})$} (see \cite[Definition 7.1.1]{hormanderI}). We have use the notation $\hat{f} = \mathcal{F}f$. 

\item We use the standard multi-index notation: if
$\alpha=(\alpha_1,\alpha_2,\dots,\alpha_n)\in \sparen{0,1,2,\dots}^{n_X}$
is a multi-index and $f$ is a function on $\mathbb{R}^{n_X}$, then
\[\partial^\alpha f=\paren{\frac{\partial}{\partial
x_1}}^{\alpha_1}\paren{\frac{\partial}{\partial
x_2}}^{\alpha_2}\cdots\paren{\frac{\partial}{\partial x_{n_X}}}^{\alpha_{n_X}}
f.\] If $f$ is a function of $(\vy,\vx,\vsig)$ then $\partial^\alpha_\vy
f$ and $\partial^\alpha_\vsig f$ are defined similarly.

\item \label{item:T*Xident} We identify the cotangent
spaces of Euclidean spaces with the underlying Euclidean spaces. For example, the cotangent space, 
$T^*(X)$, of $X$ is identified with $X\times \mathbb{R}^{n_X}$. If $\Phi$ is a function of $(\vy,\vx,\vsig)\in Y\times X\times \rr^N$,
then we define $\dd_{\vy} \Phi = \paren{\partyf{1}{\Phi},
\partyf{2}{\Phi}, \cdots, \partyf{{n_X}}{\Phi} }$, and $\dd_\vx\Phi$ and $
\dd_{\vsig} \Phi $ are defined similarly. Identifying the cotangent space with the Euclidean space as mentioned above, we let $\dd\Phi =
\paren{\dd_{\vy} \Phi, \dd_{\vx} \Phi,\dd_{\vsig} \Phi}$.


\end{enumerate}

\noindent The singularities of a function and the directions in which they occur
are described by the wavefront set \cite[page
16]{duistermaat1996fourier}, which we now define.
\begin{definition}
\label{WF} Let $X$ be an open subset of $\mathbb{R}^{n_X}$ and let $f$ be a
distribution in $\mathcal{D}'(X)$.  Let $(\vx_0,\vxi_0)\in X\times
\drn$.  Then $f$ is \emph{smooth at $\vx_0$ in direction $\vxio$} if
there exists a neighborhood $U$ of $\vx_0$ and $V$ of $\vxi_0$ such
that for every $\Phi\in \Dc(U)$ and $N\in\mathbb{R}$ there exists a
constant $C_N$ such that for all $\vxi\in V$ {and $\lambda >1$},
\begin{equation}
\left|\Fc(\Phi f)(\lambda\vxi)\right|\leq C_N(1+\abs{\lambda})^{-N}.
\end{equation}
The pair $(\vx_0,\vxio)$ is in the \emph{wavefront set,} $\wf(f)$, if
$f$ is not smooth at $\vx_0$ in direction $\vxio$. We define for fixed $\vx_0$, $\WF_{\vx_0}(f) = \{\xi : (\vx_0,\xi) \in \WF(f)\}$, and we define the singular support of $f$, $\text{ssupp}(f)$, as the natural projection of $\WF(f)$ onto $X$.
\end{definition}

Intuitively, the elements 
$(\vxo,\vxio)\in \WF(f)$ are the point-normal vector pairs at
which $f$ has singularities; $\vxo$ is the location of the 
  singularity, and  $\vxio$ is the direction in which the
  singularity occurs.
  A
 geometric example of the wavefront set is given by the characteristic function $f$ of a domain $\Omega \subset \mathbb{R}^{n_X}$ with smooth boundary, which is $1$ on $\Omega$ and $0$ off of $\Omega$. Then the wavefront set is
\[
\WF(f) = \{(\vx,t\vv) \ : t \neq 0, \ \vx \in \partial \Omega, \ \mbox{$\vv$ is orthogonal to $\partial \Omega$ at $\vx$}\}.
\]
In other words, the wavefront set is the set of points in the boundary of $\Omega$ together with the nonzero normal vectors to the boundary. {The set of normals of the surface $\partial \Omega$ is a subset of the cotangent bundle $T^*X$, and here we are using the identification of $T^*X$ with $X \times \mathbb{R}^{n_X}$ mentioned above in point (\ref{item:T*Xident}).} The wavefront set is an important consideration in imaging since elements of the wavefront set will correspond to sharp features of an image.




The wavefront set of a distribution on $X$ is normally defined as a
subset the cotangent bundle $T^*(X)$ so it is invariant under
diffeomorphisms, but we do not need this invariance, so we will
continue to identify $T^*(X) = X \times \rr^{n_X}$ and consider $\WF(f)$ as
a subset of $X\times (\rr^{n_X}\smo)$.


 \begin{definition}[{\cite[Definition 7.8.1]{hormanderI}}] \label{ellip}We define
 $S^m(Y \times X, \mathbb{R}^N)$ to be the
set of $a\in \Ec(Y\times X\times \mathbb{R}^N)$ such that for every
compact set $K\subset Y\times X$ and all multi--indices $\alpha,
\beta, \gamma$ the bound
\[
\left|\partial^{\gamma}_{\vy}\partial^{\beta}_{\vx}\partial^{\alpha}_{\vsig}a(\vy,\vx,\vsig)\right|\leq
C_{K,\alpha,\beta,\gamma}(1+\norm{\vsig})^{m-|\alpha|},\ \ \ (\vy,\vx)\in K,\
\vsig\in\mathbb{R}^N,
\]
holds for some constant $C_{K,\alpha,\beta,\gamma}>0$. 

 The elements of $S^m$ are called \emph{symbols} of order $m$.  Note
that this symbol class is  sometimes denoted $S^m_{1,0}$.  The symbol
$a\in S^m(Y \times X,\rr^N)$ is \emph{elliptic} if for each compact set
$K\subset Y\times X$, there is a $C_K>0$ and $M>0$ such that
\bel{def:elliptic} \abs{a(\vy,\vx,\vsig)}\geq C_K(1+\norm{\vsig})^m,\
\ \ (\vy,\vx)\in K,\ \norm{\vsig}\geq M.
\ee 
\end{definition}

\begin{definition}[{\cite[Definition
        21.2.15]{hormanderIII}}] \label{phasedef}
A function $\Phi=\Phi(\vy,\vx,\vsig)\in
\Ec(Y\times X\times(\mathbb{R}^N\smo))$ is a \emph{phase
function} if $\Phi(\vy,\vx,\lambda\vsig)=\lambda\Phi(\vy,\vx,\vsig)$, $\forall
\lambda>0$ and $\mathrm{d}\Phi$ is nowhere zero. The
\emph{critical set of $\Phi$} is
\[\Sigma_\Phi=\{(\vy,\vx,\vsig)\in Y\times X\times(\mathbb{R}^N\smo)
: \dd_{\vsig}\Phi=0\}.\] 
 A phase function is
\emph{clean} if the critical set $\Sigma_\Phi$ is a smooth manifold {with tangent space defined {by} the kernel of $\mathrm{d}\,(\mathrm{d}_\sigma\Phi)$ on $\Sigma_\Phi$. Here, the derivative $\mathrm{d}$ is applied component-wise to the vector-valued function $\mathrm{d}_\sigma\Phi$. So, $\mathrm{d}\,(\mathrm{d}_\sigma\Phi)$ is treated as a Jacobian matrix of dimensions $N\times ({n_Y + n_X}+N)$.}
\end{definition}
\noindent By the {Constant Rank Theorem} \cite[Theorem 4.12]{lee2012smooth} the requirement for a phase
function to be clean is satisfied if
$\mathrm{d}\paren{\mathrm{d}_\vsig
\Phi}$ has constant rank.

\begin{definition}[{\cite[Definition 21.2.15]{hormanderIII} and
      \cite[section 25.2]{hormander}}]\label{def:canon} Let $X$ and
$Y$ be open subsets of $\rn$. Let $\Phi\in \Ec\paren{Y \times X \times
{\rr}^N}$ be a clean phase function.  In addition, we assume that
$\Phi$ is \emph{nondegenerate} in the following sense:
\[\text{$\dd_{\vy}\Phi$ and $\dd_{\vx}\Phi$ are never zero on
$\Sigma_{\Phi}$.}\]
  The
\emph{canonical relation parametrized by $\Phi$} is defined as
\begin{equation}\label{def:Cgenl} \begin{aligned} \Cc=&\sparen{
\paren{\paren{\vy,\dd_{\vy}\Phi(\vy,\vx,\vsig)};\paren{\vx,-\dd_{\vx}\Phi(\vy,\vx,\vsig)}}:(\vy,\vx,\vsig)\in
\Sigma_{\Phi}}{.}
\end{aligned}
\end{equation}
\end{definition}

\begin{definition}\label{FIOdef}
Let $X$ and $Y$ be open subsets of {$\mathbb{R}^{n_X}$ and $\mathbb{R}^{n_Y}$, respectively.} {Let an operator $A :
\Dc(X)\to \mathcal{D}'(Y)$ be defined by the distribution kernel
$K_A\in \mathcal{D}'(Y\times X)$, in the sense that
$Af(\vy)=\int_{X}K_A(\vy,\vx)f(\vx)\mathrm{d}\vx$. Then we call $K_A$
the \emph{Schwartz kernel} of $A$}. A \emph{Fourier
integral operator (FIO)} of order $\mu = m + N/2 - (n_X+n_Y)/4$ is an operator
$A:\Dc(X)\to \mathcal{D}'(Y)$ with Schwartz kernel given by an
oscillatory integral of the form
\begin{equation} \label{oscint}
K_A(\vy,\vx)=\int_{\mathbb{R}^N}
e^{i\Phi(\vy,\vx,\vsig)}a(\vy,\vx,\vsig) \mathrm{d}\vsig,
\end{equation}
where $\Phi$ is a clean nondegenerate phase function and $a$ is a
symbol in $S^m(Y \times X , \mathbb{R}^N)$. The \emph{canonical
relation of $A$} is the canonical relation $\mathcal{C}$ of $\Phi$ defined in
\eqref{def:Cgenl}. $A$ is called an \emph{elliptic} FIO if its symbol is elliptic. An FIO is called a \emph{pseudodifferential operator} if {$X = Y$ and} its canonical relation $\Cc$ is contained in the diagonal, i.e.,
$\Cc \subset \Delta := \{ (\vx,\vxi;\vx,\vxi)\}$.
\end{definition}


Fourier integral operators are defined in \cite{hormander} more generally as operators with Schwartz kernel locally given by expressions of the form \eqref{oscint} where the phase functions each give rise to pieces of the same global cannonical relation. However, the local expression \eqref{oscint} is sufficient for our purposes.


We have the definition of Sobolev spaces from \cite[page 200]{natterer}.

\begin{definition}\label{sobo_spaces}
We define the Sobolev space order $\alpha$,
\begin{equation}
H^\alpha(\mathbb{R}^n) = \{f\in \mathcal{S}'(\mathbb{R}^n) : (1 + |\xi|^2)^{\alpha/2} \hat{f} \in L^2(\mathbb{R}^n)\}
\end{equation}
with norm 
\begin{equation}\label{HRnnorm}
\|f\|_{H^{\alpha}(\mathbb{R}^n)} = \|(1 + |\xi|^2)^{\alpha/2} \hat{f}\|_{L^2(\mathbb{R}^n)}.
\end{equation}
For $\Omega \subset \mathbb{R}^n$ and $\alpha = s + \sigma$ where $0\leq\sigma<1$ and $s \geq 0$ an integer, we define
\begin{equation} \label{HOrnom}
\begin{split}
\|f\|^2_{H^{\alpha}(\Omega)} &= \|f\|^2_{H^s(\Omega)} + \sum_{|k|= s}\int_{\Omega}\int_{\Omega} \frac{|\partial^kf(\vx) - \partial^kf(\vy)|^2}{|\vx - \vy|^{n+2\sigma}}\mathrm{d}\vx\mathrm{d}\vy,
\end{split}
\end{equation}
where $\|f\|^2_{H^s(\Omega)} = \sum_{|k|\leq s} \|\partial^kf\|_{L^2(\Omega)}$. We will also use the notation
\[
|f|^2_{H^{\alpha}(\Omega)} = \sum_{|k|= s}\int_{\Omega}\int_{\Omega} \frac{|\partial^kf(\vx) - \partial^kf(\vy)|^2}{|\vx - \vy|^{n+2\sigma}}\mathrm{d}\vx\mathrm{d}\vy
\]
for the semi-norm which appears on the left side of \eqref{HOrnom}. Further, we define $H^\alpha_c(\Omega) = \{f \in H^\alpha(\Omega) : \text{supp}(f)\ \text{compact, and}\ \text{supp}(f) \subset {\Omega}\}$.
\end{definition}

There is potential confusion in the above definitions when $\Omega = \mathbb{R}^n$ and $\alpha \geq 0$, but in fact the two norms \eqref{HRnnorm} and \eqref{HOrnom} are equivalent in that case.


From \cite{sobolev}, we have the theorem which addresses Sobolev regularity through smooth compositions.

\begin{theorem}
\label{sobo}
Let $\Omega$ be a compact subset of $\mathbb{R}^n$ with smooth boundary. Let $\alpha$ be rational, and let $\gamma \in C^{\infty}(\mathbb{R})$ be such that $\gamma^{(j)}\in L^{\infty}(\mathbb{R})$ for any $j\geq 0$. Let $u \in H^{\alpha}(\Omega)$. Then, $\gamma \circ u \in H^{\alpha}(\Omega)$. 
\end{theorem}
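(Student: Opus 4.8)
The plan is to split the Sobolev index as $\alpha = s + \sigma$ with $s\in\{0,1,2,\dots\}$ and $0\le\sigma<1$, exactly as in Definition \ref{sobo_spaces}, and to control the three ingredients of $\norm{\gamma\circ u}_{H^\alpha(\Omega)}$ separately: the $L^2(\Omega)$ bound on $\gamma\circ u$ itself, the $L^2(\Omega)$ bounds on its derivatives up to order $s$, and the order-$\sigma$ Gagliardo seminorm of its top-order derivatives. The whole argument rests on two building blocks: (i) an elementary fractional estimate valid for $0\le\alpha<1$, and (ii) the Fa\`a di Bruno chain rule together with Gagliardo--Nirenberg interpolation. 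Throughout I would use that $\Omega$ is bounded with smooth boundary, hence a Sobolev extension domain, so that the Gagliardo--Nirenberg and Sobolev embedding inequalities on $\mathbb{R}^n$ transfer to $\Omega$.

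First I would dispose of the base case $s=0$, i.e.\ $0\le\alpha<1$, which needs no boundedness of $u$. Since $\gamma'\in L^\infty(\mathbb{R})$, the function $\gamma$ is globally Lipschitz with constant $\norm{\gamma'}_{L^\infty}$, so $\abs{\gamma(u(\vx))-\gamma(u(\vy))}\le \norm{\gamma'}_{L^\infty}\abs{u(\vx)-u(\vy)}$ pointwise. Inserting this into the double integral defining $|\cdot|_{H^\alpha(\Omega)}$ gives $|\gamma\circ u|_{H^\alpha(\Omega)}\le \norm{\gamma'}_{L^\infty}\,|u|_{H^\alpha(\Omega)}$ at once, and the $L^2$ part follows from $\abs{\gamma(u)}\le\abs{\gamma(0)}+\norm{\gamma'}_{L^\infty}\abs{u}$ together with the compactness of $\Omega$. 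This settles the theorem for $0\le\alpha<1$ and, applied to $\gamma',\gamma'',\dots$, shows that each $\gamma^{(j)}\circ u$ lies in $H^\sigma(\Omega)$.

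For $s\ge 1$ I would differentiate. By Fa\`a di Bruno, for $|k|=s$ the derivative $\partial^k(\gamma\circ u)$ is a finite sum of terms $\gamma^{(j)}(u)\prod_{i=1}^{j}\partial^{\beta_i}u$ with $\sum_i \beta_i=k$ and $1\le j\le s$, and each factor $\gamma^{(j)}(u)$ is bounded by $\norm{\gamma^{(j)}}_{L^\infty}$. For the $H^s$ (plain $L^2$) part I would bound the products $\prod_i\partial^{\beta_i}u$ in $L^2(\Omega)$ by H\"older after distributing the $s$ derivatives among the factors, each placed in the mixed-norm space supplied by the Gagliardo--Nirenberg inequality $\norm{\partial^{\beta_i}u}_{L^{2s/|\beta_i|}(\Omega)}\lesssim \norm{u}_{L^\infty(\Omega)}^{1-|\beta_i|/s}\norm{u}_{H^s(\Omega)}^{|\beta_i|/s}$, the exponents $2s/|\beta_i|$ being conjugate to $1/2$ by construction. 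For the order-$\sigma$ seminorm of these same top-order terms I would combine the Lipschitz estimate of the base case (to handle the factors $\gamma^{(j)}(u)\in H^\sigma\cap L^\infty$) with a fractional Leibniz / Kato--Ponce product estimate and a further round of Gagliardo--Nirenberg interpolation. Since $\alpha$ is rational, the passage from the integer endpoints to $\alpha=s+\sigma$ can alternatively be organised through the interpolation identity $H^{s+\sigma}=[H^s,H^{s+1}]_\sigma$ together with a nonlinear (Lipschitz-type) interpolation theorem applied to the composition map.

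The hard part is the product control for $s\ge 1$: the Gagliardo--Nirenberg estimates above require $u\in L^\infty(\Omega)$, which on the compact, smoothly bounded domain $\Omega$ is guaranteed by the embedding $H^\alpha(\Omega)\hookrightarrow L^\infty(\Omega)$ only when $2\alpha>n$. When $2\alpha\le n$ one must either assume a priori boundedness of $u$ or argue more delicately that the specific products produced by Fa\`a di Bruno still close in $L^2$ and $H^\sigma$; this is the step I expect to be the genuine obstacle, and the one where the detailed hypotheses of the cited reference do the real work. By contrast, the fractional base case and the chain-rule bookkeeping are routine.
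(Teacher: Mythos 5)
Your proposal cannot be compared against an in-paper argument, because the paper never proves Theorem \ref{sobo}: it is quoted verbatim from the reference cited as \cite{sobolev}, with no proof given. So the attempt has to stand on its own, and it does not. You explicitly leave open the range $1 \le \alpha \le n/2$: every step of your $s \ge 1$ argument --- the Gagliardo--Nirenberg bounds $\|\partial^{\beta_i}u\|_{L^{2s/|\beta_i|}} \lesssim \|u\|_{L^\infty}^{1-|\beta_i|/s}\|u\|_{H^s}^{|\beta_i|/s}$ on the Fa\`a di Bruno products, and the Kato--Ponce/fractional Leibniz step --- requires $u \in L^\infty(\Omega)$, which the Sobolev embedding supplies only when $2\alpha > n$. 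A sketch that handles $\alpha < 1$ (Lipschitz) and $2\alpha > n$ (Moser) but concedes that the intermediate case is "the genuine obstacle" is an incomplete proof, not a proof.

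Moreover, this gap is not a technicality that more careful bookkeeping could close: it is fatal for the statement as transcribed. By a classical result of Dahlberg (1979), if $1 + 1/p < s < n/p$ then the only functions $F$ for which $F \circ u \in W^{s,p}(\mathbb{R}^n)$ for every $u \in W^{s,p}(\mathbb{R}^n)$ are the affine ones; for $p = 2$ this range is nonempty once $n \ge 4$ (e.g.\ $n = 4$, $\alpha = 7/4$, which is rational). The counterexamples are local and compactly supported, so they live inside any compact $\Omega$ with smooth boundary, and the failing $F$ can be taken smooth, nonlinear, and bounded together with all its derivatives (e.g.\ built from $\sin$), satisfying every hypothesis on $\gamma$. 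Hence no argument can establish the theorem in the range you left open; a correct proof must import an extra hypothesis (such as $2\alpha > n$, or $u \in L^\infty(\Omega)$) that the paper's statement omits, presumably present in the cited reference. It is worth noting that in the paper's actual applications one has $n = 2$ and composition is applied at Sobolev order exceeding $n/2 = 1$, so the two cases you do handle would suffice for everything the paper needs --- but as a proof of Theorem \ref{sobo} as stated, your attempt is incomplete, and necessarily so.
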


\noindent We have the theorem regarding trace operators.
\begin{theorem}
\label{trace_thm}
Let $Tf(\vx') = f(\vx',0)$ be a trace operator. Then, for $\alpha > 1/2$,
\begin{equation}
T: H^{\alpha,2}(\mathbb{R}^n) \to H^{\alpha-1/2,2}(\mathbb{R}^{n-1})
\end{equation}
is a bounded map.
\end{theorem}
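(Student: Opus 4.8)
The plan is to establish the estimate first for Schwartz functions $f\in \Sc(\rn)$ and then extend to all of $H^\alpha(\rn)$ by density (here $H^{\alpha,2}$ is the $L^2$-based space $H^\alpha$ of Definition \ref{sobo_spaces}). Throughout I write $\vx=(\vx',\xn)$ and $\xi=(\xi',\xin)$ with $\vx',\xi'\in\rnm$, denote the full Fourier transform by $\hat f=\Fc f$, and set $g=Tf$ so that $g(\vx')=f(\vx',0)$. The first step is to express $\hat g$ in terms of $\hat f$. Applying one-dimensional Fourier inversion in $\xn$ and evaluating at $\xn=0$ gives $f(\vx',0)=\frac{1}{2\pi}\int_{\rr}\tf(\vx',\xin)\,\dd\xin$, where $\tf$ is the partial Fourier transform of $f$ in $\xn$; taking the Fourier transform of this identity in $\vx'$ produces the exact relation
\begin{equation}\label{trace_key}
\hat g(\xi')=\frac{1}{2\pi}\int_{\rr}\hat f(\xi',\xin)\,\dd\xin .
\end{equation}

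Next I would estimate $\hat g(\xi')$ by inserting the Sobolev weight and applying Cauchy--Schwarz in $\xin$. Writing the integrand of \eqref{trace_key} as $(1+\abs{\xi}^2)^{\alpha/2}\hat f\cdot(1+\abs{\xi}^2)^{-\alpha/2}$ and using $\abs{\xi}^2=\abs{\xi'}^2+\xin^2$, I obtain
\begin{equation}\label{trace_CS}
\abs{\hat g(\xi')}^2\leq \frac{1}{(2\pi)^2}\paren{\int_{\rr}(1+\abs{\xi}^2)^{\alpha}\abs{\hat f(\xi',\xin)}^2\,\dd\xin}\paren{\int_{\rr}(1+\abs{\xi'}^2+\xin^2)^{-\alpha}\,\dd\xin}.
\end{equation}
The crux of the argument, and the only place the hypothesis $\alpha>1/2$ is used, is the second factor. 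Substituting $\xin=(1+\abs{\xi'}^2)^{1/2}t$ gives
\begin{equation}\label{trace_int}
\int_{\rr}(1+\abs{\xi'}^2+\xin^2)^{-\alpha}\,\dd\xin=(1+\abs{\xi'}^2)^{1/2-\alpha}\int_{\rr}(1+t^2)^{-\alpha}\,\dd t=:C_\alpha(1+\abs{\xi'}^2)^{1/2-\alpha},
\end{equation}
and the one-dimensional integral $C_\alpha$ converges precisely when $2\alpha>1$. This is exactly where the loss of one half derivative in the Sobolev order is produced.

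Finally I would combine \eqref{trace_CS} and \eqref{trace_int}, multiply by $(1+\abs{\xi'}^2)^{\alpha-1/2}$, and integrate over $\xi'\in\rnm$, using Fubini to interchange the order of integration on the right. This yields
\begin{equation}\label{trace_final}
\int_{\rnm}(1+\abs{\xi'}^2)^{\alpha-1/2}\abs{\hat g(\xi')}^2\,\dd\xi'\leq \frac{C_\alpha}{(2\pi)^2}\int_{\rn}(1+\abs{\xi}^2)^{\alpha}\abs{\hat f(\xi)}^2\,\dd\xi,
\end{equation}
which, by Definition \ref{sobo_spaces}, is precisely $\norm{Tf}_{H^{\alpha-1/2}(\rnm)}\leq C\norm{f}_{H^\alpha(\rn)}$ with $C=\sqrt{C_\alpha}/(2\pi)$. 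Since $\Sc(\rn)$ is dense in $H^\alpha(\rn)$ and the trace of a Schwartz function lies in $H^{\alpha-1/2}(\rnm)$, the bounded map $T$ extends uniquely to all of $H^\alpha(\rn)$, completing the proof. I expect the only genuinely substantive step to be the integral \eqref{trace_int}: it simultaneously delivers the sharp half-derivative loss and pins down the role of the hypothesis $\alpha>1/2$, while the reduction \eqref{trace_key} and the density extension are standard.
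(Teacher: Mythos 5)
Your proof is correct. The paper itself states Theorem \ref{trace_thm} without proof, quoting it as a known classical result, so there is no in-paper argument to compare against; your write-up supplies the standard textbook proof (Fourier representation of the trace, Cauchy--Schwarz against the weight $(1+\abs{\xi'}^2+\xin^2)^{-\alpha}$, and the one-dimensional integral that converges exactly when $\alpha>1/2$, followed by density of $\Sc(\rn)$ in $H^{\alpha}(\rn)$), and every step, including the constant bookkeeping and the half-derivative loss, checks out.
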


\noindent We also extend the definition of Sobolev spaces in the next definition.

\begin{definition}\label{gen_sobo}
    Let $H^{\alpha,p}(\mathbb{R}^n)$ be the Banach space defined by the norm
    \[
    \|f\|_{H^{\alpha,p}(\mathbb{R}^n)} = \| (1+|\xi|^2)^{\frac{\alpha}{2}} \hat{f}\|_p.
    \]
\end{definition}

\noindent We now define local and microlocal Sobolev regularity and wavefront sets \cite{Q1993sing}.

\begin{definition} \label{def:SobolevWF}
A distribution $g$ is in $H^{\alpha,p}$ locally near a point $\vx_0$ if and only if there exists a cut-off function $\varphi \in C_c^{\infty}(\mathbb{R}^n)$ with $\varphi(\vx_0) \neq 0$ such that $\varphi g \in H^{\alpha,p}(\mathbb{R}^n)$. The distribution $g$ is in $H^{\alpha,p}$ microlocally near $(\vx_0,\xi_0)$ if and only if there is a cut-off function $\varphi \in C^{\infty}_c(\mathbb{R}^n)$ with $\varphi(\vx_0) \neq 0$ and function $u(\xi)$ homogeneous of degree zero and smooth on $\mathbb{R}^n\backslash \{0\}$ with $u(\xi_0)\neq 0$ such that $(1+|\xi|^2)^{\alpha/2}u(\xi)\mathcal{F}(\varphi g)(\xi) \in L^p(\mathbb{R}^n)$. We define the Sobolev wavefront set
\begin{equation}
\WF^{\alpha,p}(g) = \left\{ (\vx,\xi) \in \WF(g) : g\ \text{is in}\ H^{\alpha,p}\ \text{microlocally near}\ (\vx, \xi) \right\}
\end{equation}
and $\text{ssupp}^{\alpha,p}(g)$ as the natural projection of $\WF^{\alpha,p}(g)$ onto $X$. If $p$ is omitted in any of the cases above, then we assume the standard Sobolev space with $p = 2$.
\end{definition}




\noindent The weighted V-line transform \cite{mishra2025tensor} is defined below.

\begin{definition}\label{vline_def}
Let $f \in L^2_c(\mathbb{R}^2)$. Then, for $a,b\in\mathbb{R}$, we define the weighted V-line transform with opening angle $\psi$ by
\begin{equation}
\label{vline}
\begin{split}
\mathcal{V}_{a,b}f(\vx,\phi) &= \mathcal{V}_{a,b}f(\vx, \Phi) \\
&= a\int_0^\infty f(\vx + t \Phi) \mathrm{d}t + b \int_0^\infty f( \vx + t \Phi' ) \mathrm{d}t\\
&= aLf(\vx,\Phi) +b Lf(\vx,\Phi'),
\end{split}
\end{equation}
where $\Phi = (\cos(\phi + 2\psi - \pi/2),\sin(\phi + 2\psi - \pi/2))$ and $\Phi' = (\cos(\phi - \pi/2),\sin(\phi - \pi/2))$, and $2\psi \in (0,\pi)$ is the angle between the V-lines. Here, $Lf(\vx,\Phi)$ is a divergent beam transform \cite{hamaker1980divergent} which defines the integrals of $f$ over the divergent beam $\{\vx + t\Phi : t \geq0 \}$. As \eqref{vline} would suggest, for convenience of notation, we interchange the angle $\phi \in \mathbb{R}$ and the vector $\Phi\in S^1$ as input variables for $\mathcal{V}_{a,b}f$ and $Lf$.

We define a smoothed variant of the V-line transform as follows
\begin{equation} \label{smoothvline}
\begin{split}
\widetilde{ \mathcal{V}_{a,b}} f(\vx,\phi) &= a\widetilde{Lf}(\vx,\Phi) +b \widetilde{Lf}(\vx,\Phi')\\
&= \int_{\mathbb{R}^2}\varphi(\vx - \vx_0) \mathcal{V}_{a,b} f(\vx_0,\phi) \mathrm{d}\vx_0,
\end{split}
\end{equation}
where $\widetilde{Lf}(\vx,\phi) = \int_{\mathbb{R}^2} \varphi(\vx  -\vx_0) Lf(\vx_0,\phi) \mathrm{d}\vx_0$, and $\varphi > 0$ is a smoothing kernel which satisfies $\int_{\mathbb{R}^2}\varphi = 1$.
\end{definition}

\section{The scanning geometry and physical model} 
\label{model}
In this section, we present our scanning geometry and derive our physical model for the Compton scatter intensity. This leads us to a non-linear variant of the attenuated Radon transform.

We consider the scanning geometry in figure \ref{fig1}. The photons are emitted from $\vs$ with fixed energy $E$ ($\vs$ is monochromatic), and scatter from $\vx$ with energy $E_s$ towards $\vd$. $\vs$ and $\vd$ are constrained to lie on a lines tangent to the unit ball, $B = \{|\vx| = 1\}$, and the scanned object, $f \in L^2_c(B)$, is rotated with angle $\theta$ about the origin. The scattered energy is
\begin{equation}
\label{E_s}
E_s=\frac{E}{1+(E/E_0)(1-\cos\omega)},
\end{equation}
where $E_0\approx 511\text{keV}$ denotes the electron rest energy and $\omega = \pi - 2\psi$ is the scattering angle. By equation \eqref{E_s}, $\omega$ is determined by $E, E_s$. We consider $\omega \leq 90^\circ$ based on the geometry of figure \ref{fig1}, i.e., we consider forward scattered photons. The detectors are collimated to accept photons in direction perpendicular to the detector array. Thus, the photons are constrained to scatter on lines $L(s,\theta)$ as pictured in figure \ref{fig1}.

This will be important later, as we use known results on the classical (hyperplane) Radon transform in 2-D to prove some of our main theorems.
\begin{figure}[!h]
\centering
\begin{tikzpicture}[scale=0.8]
\draw[rotate=30, fill=green!50,rounded corners=1mm] (0,0) ellipse [x radius=2cm, y radius=1cm];
\node at (-1.5,-0.5) {$f$};
\draw [->,line width=1pt] (-6,0)--(6,0)node[right] {$x_1$};
\draw [->,line width=1pt] (0,-6)--(0,6)node[right] {$x_2$};
\draw [thick] (0,0) circle (4);
\draw [<->] (0,0)--(2.83,-2.83);
\node at (1.415,-1.8) {$1$};
\draw [red] (4,4)--(-4,4)node[left]{\textcolor{black}{source array}};
\draw [blue] (4,4)--(4,-4)node[below]{\textcolor{black}{detector array}};
\draw (4,1)--(-5,1)node[above]{$L(s,\theta)$};
\draw (1,4)node[above]{$\vs$}--(1,1)node[below]{$\vx$};
\draw (1.2,1)--(4,1)node[right]{$\vd$};
\node at (0.77,2.5) {$l_1$};
\node at (2.5,0.75) {$l_2$};
\coordinate (D) at (1,4);
\coordinate (w) at (1,1);
\coordinate (a) at (4,1);
\draw pic[draw=orange, <->,"$2\psi$", angle eccentricity=1.5] {angle = a--w--D};
\end{tikzpicture}
\caption{Linear CST scanner design. Sources ($\vs$) along the red half line $\{(1 - t,1) : t\geq 0\}$ emit photons energy $E$ and illuminate a density, $f$, supported on $B = \{|\vx|<1\}$. The incoming photons attenuate along $l_1$, scatter at $\vx$ with energy $E_s$, and then attenuate along $l_2$ towards the detector, $\vd$ (on the blue detector array). The detectors are collimated in the $x_1$ direction (perpendicular to the detector array) to receive photons scattered on the line $L(s,\theta)$. $f$ is rotated by angle $\theta$ about the origin. In the picture, $\theta = \pi/2$.}
\label{fig1}
\end{figure}
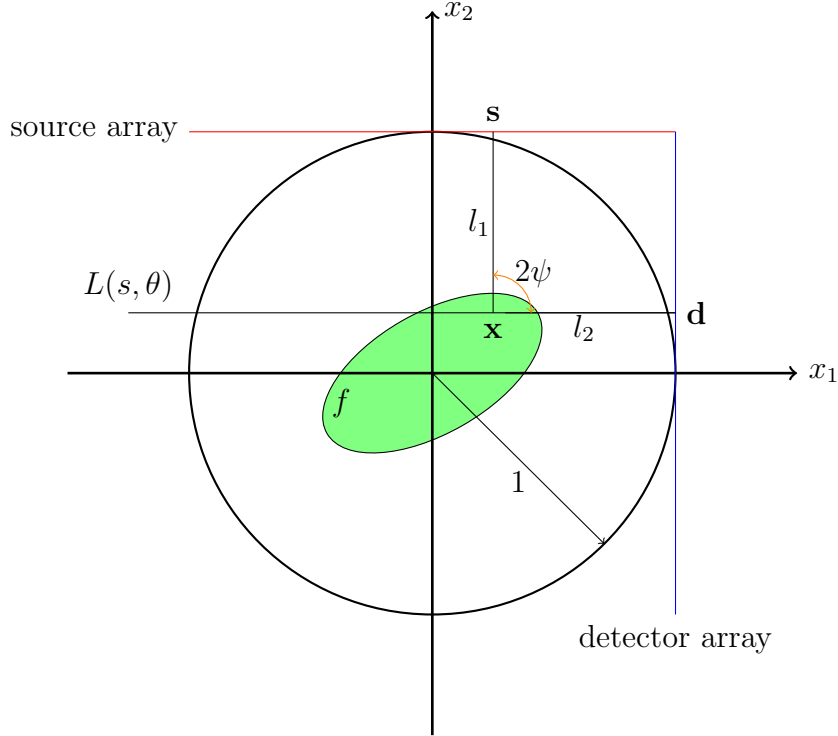

The first-order Compton signal can be modeled
\begin{equation}
\label{model_0}
\mathcal{R}f(s,\theta) = \lambda(E,\omega)\int_{\vx\in L(s,\theta)}w f(\vx)e^{-\paren{ \widetilde{L \mu_E}(\vx, \theta + 2\psi - \pi/2) + \widetilde{L\mu_{E_s}}(\vx, \theta - \pi/2)}}\mathrm{d}l,
\end{equation}
where $f$ denotes the electron density, $\mu_E$ is the attenuation coefficient at energy $E$, and
\begin{equation}
L(s,\theta) = \{\vx\in\mathbb{R}^2 : \vx \cdot \Theta = s\},
\end{equation}
with $\Theta = (\cos\theta,\sin\theta)$. The arc element on $L(s,\theta)$ is denoted $\mathrm{d}l$. The smoothing kernel, $\varphi$, which is implicit in the definition of $\widetilde{L\mu_E}$ (see Definition \ref{vline_def}), models the finite size of $\vs$ and $\vd$. {Similar models have been used previously in the literature \cite{ziou2021scale}.} For example, $\varphi$ could be a characteristic on a unit ball, assuming a spherical pixel. When $\varphi$ is a delta function, then $\vs$ and $\vd$ are point sources and detectors. Later, in section \ref{main_thm_section}, we will establish conditions on $\varphi$ to prove injectivity results for $\mathcal{R}$. 

The remaining terms in \eqref{model} are defined as follows. The smooth weight, $w$, accounts for solid angle effects \cite{me2} and satisfies $w > 0$ on $B$.
The $\lambda$ term is defined
\begin{equation}
\lambda = \lambda(E,\omega) = I_0(E)\frac{\mathrm{d}\sigma}{\mathrm{d}\Omega}(E,\omega),
\end{equation}
where $I_0$ is the photon intensity from $\vs$, and $\frac{\mathrm{d}\sigma}{\mathrm{d}\Omega}$ is the differential cross section, as given by the Klein-Nishina formula \cite{klein1929streuung}. For the purposes of our theorems, we fix $\omega \leq 90^{\circ}$, and thus, since $E$ is also fixed, $\lambda$ is essentially constant.
The line segments $l_1 = l_1(\vs,\vx)$ and $l_2 = l_2(\vx,\vd)$ connect $\vs$ to $\vx$, and $\vx$ to $\vd$, respectfully, as labeled in figure \ref{fig1}. 

The attenuation coefficient and electron density satisfy \cite{WebberQuinto2020II}
\begin{equation}
\mu_E(Z) = \sigma_E(Z)\cdot f,
\end{equation}
where $\sigma_E$ is the total electron cross-section, and $Z$ is the material atomic number. When $E$ becomes large enough ($E\geq 100$keV) and $Z$ is small enough ($Z\leq 20$), then $\sigma_E$ loses it's dependence on $Z$, i.e., $f$ and $\mu_E$ are proportional with constant of proportionality $\sigma_E$ \cite{WebberQuinto2020II}.

Let us assume the source energy is sufficiently large so that $E, E_s \geq 100$keV. For example, if $E\geq 1$MeV, then $E_s\geq 338$keV since $\omega \leq \pi/2$. This assumption is reasonable as many monochromatic sources (e.g., gamma ray) have energies at MeV magnitude. Let us further assume that the material effective atomic number satisfies $Z\leq 20$ (e.g., any carbon, hydrogen, nitrogen, or  oxygen-based molecule). Then, \eqref{model} becomes
\begin{equation}
\label{model_1}
\mathcal{R}f(s,\theta) = \lambda\int_{L(s,\theta)}w fe^{-\widetilde{ \mathcal{V}_{a,b} }f(\vx, \theta)}\mathrm{d}l,
\end{equation}
where $a = \sigma_E >0 $ and $b = \sigma_{E_s}>0$. This is a non-linear operator acting on $f$ due to the exponent and as the integral weights depend on $f$. This can be viewed as a non-linear variant of the classical Radon transform. We shall now analyze the properties of the singularities of $\mathcal{R}f$ in the following sections.

\section{Main microlocal theorem}
\label{main_thm_section}
In this section, we present our central microlocal theory which analyzes the Sobolev regularity of the singularities of $\mathcal{R}f$. Let us first define the classical weighted (linear) Radon transform
\begin{equation}
R_wf(s,\theta) = \int_{L(s,\theta)} wf \mathrm{d}l,
\end{equation}
where $f\in L^2_c(B)$ and $w$ is a smooth weight with $w > 0$ on $B$. Using a combination of classical microlocal analysis and Sobolev space theory, we will show that, for $\varphi$ sufficiently smooth (i.e., in a high enough order Sobolev space) and $f = u\chi_\Omega$,
\begin{equation}
\label{main_res}
\text{ssupp}\paren{\mathcal{R}f} \backslash \text{ssupp}^{1}\paren{\mathcal{R}f} = \text{ssupp}\paren{R_wf},
\end{equation}
where $\chi_\Omega$ is the characteristic function on $\Omega \subset \{|\vx| <R\}$, a simply connected domain, and $u>0$ is a smooth function. We use this result to show, under reasonable assumptions on the boundary of $\Omega$, that $\WF(f)$ is uniquely determined by $\mathcal{R}f$.

To prove \eqref{main_res}, we will first reformulate the physical model \eqref{model_1} using FIO and distributional products.

\subsection{Reformulation of physical model}
Let us define the linear operator $\mathcal{A}_w : L^2_c(\mathbb{R}^3) \to L^2(\mathbb{R} \times [-\pi,\pi])$,
\begin{equation}
\mathcal{A}_wh(s,\theta) = \int_{\mathbb{R}^3}\delta( \theta - x_3)\delta(\vx'\cdot \Theta - s) w h(\vx) \mathrm{d}\vx,
\end{equation}
where $w>0$ is a smooth weight and $\vx' = (x_1,x_2)$. Then, $\mathcal{A}$ defines the integrals of $h$ over lines embedded in planes $\{x_3 = \theta\}$ and $\mathcal{A}_wh = R_w(h(\cdot,\theta))(s,\theta)$.

Let $g = e^{-\widetilde{\mathcal{V}_{a,b}}f}$. Then, \eqref{model_1} becomes
\begin{equation}
\label{R_to_R}
\mathcal{R}f(s,\theta) = \mathcal{A}_w\paren{ f g }(s,\theta) = R_w(h(\cdot,\theta))(s,\theta),
\end{equation}
where $h = fg$. Thus, the Compton signal can be modeled as a linear ray transform applied to the product $fg$. For each fixed $\theta$, this defines the standard Radon projection of $h$ in direction $\Theta$ on the 2-D slice $h(\cdot,\theta)$.

We now discuss the microlocal and smoothing properties of the V-line transform and go on to analyze the smoothness of $h$.

\subsection{The wavefront set of the V-line transform}
Let us consider properties of the divergent beam transform
\[
\begin{split}
Lf(\vx,\Phi) & = \int_0^\infty f(\vx+t \Phi) \d t\\
& = \int_{-\infty}^\infty f(\vx + (t- \Phi^T \vx) \Phi) H_{\Phi^T \vx}(t) \ \d t
\end{split}
\]
where $H_{\Phi^T \vx}(t)$ is the Heaviside function with step at $\Phi^T \vx$. If we define
\[
h(\vx,\Phi,t) =  f(\vx + (t - \Phi^T \vx) \Phi) H_{\Phi^T \vx}(t)
\]
then the divergent beam transform is equivalent to
\[
\mathcal{L}h(\vx,\Phi) = \int_{-\infty}^\infty h(\vx,\Phi,t) \ \mathrm{d} t.
\]



Now consider the singularities of $h$. First, by \cite[Theorem 8.2.4]{hormanderI}
\[
\begin{split}
& \WF(f(\vx+(t-\Phi^T\vx) \Phi)) \subset \Big \{ \big (\vx,\Phi,t; (v - \Phi (\Phi^Tv)) \cdot \d \vx + ((t-\Phi^T\vx) (\Phi_\perp^T v) \\
& \hskip2cm -(\Phi_\perp^T \vx) (\Phi^T v) \d \phi + \Phi^T v \d t\big )\  : \ (\vx + (t - \Phi^T \vx) \Phi; v \cdot \d \vx) \in \WF(f) \Big \}
\end{split}
\]
where $\Phi = (\cos(\phi+2\psi-\pi/2),\sin(\phi+2\psi-\pi/2))^T$ and $\Phi_\perp = (-\sin(\phi+2\psi-\pi/2),\cos(\phi+2\psi-\pi/2))^T$. By the same theorem,
\[
\WF(H_{\Phi^T \vx}(t)) = \Big \{ \big (\vx,\Phi,\Phi^T \vx; a(\Phi \cdot \d \vx + (\Phi_\perp^T \vx)\d \phi - dt)\big ) \ ; \ a \in \mathbb{R}\setminus \{0\} \Big \}
\]
Then, by \cite[Theorem 8.2.10]{hormanderI}
\[
\begin{split}
& \WF(h) \subset \Big \{ \big (\vx, \Phi, \Phi^T \vx ; (a \Phi + (v - \Phi (\Phi^T v))) \cdot \d \vx + (a (\Phi_\perp^T \vx) - (\Phi_\perp^T \vx) (\Phi^T v)) \d \phi + (\Phi^T v - a) \d t   \big ) \\
& \hskip1cm : \ a \in \mathbb{R}\setminus \{0\}, \ (\vx; v \cdot \d \vx) \in \WF(f) \Big \} \bigcup \WF(f(\vx+(t-\Phi^T\vx) \Phi)) \bigcup \WF(H_{\Phi^T \vx}(t)).
\end{split}
\]
The operator $\mathcal{L}$ is an FIO with canonical relation given by
\[
\mathcal{C}_\mathcal{L} = \Big \{ \big ((x,\Phi; v\cdot \dd \vx + \alpha \dd \phi); ( x,\Phi, t ; v\cdot \dd \vx + \alpha \dd \phi) \big ) \Big \}
\]
Applying this canonical relation to the wavefront set gives
\[
\begin{split}
& \WF(\mathcal{L}h) \subset \mathcal{C}_{\mathcal{L}} \WF(h) = \Big \{(\vx,\Phi; v \cdot \d \vx) \ : (\vx; v \cdot \d \vx ) \in \WF(f) \Big \}
\\ &\hskip2cm \bigcup \Big \{ (\vx,
\Phi; v \cdot \d \vx + (t-\Phi^T\vx) (\Phi_\perp^T v) \d \phi) \ : \ (\vx+ (t - \Phi^T \vx) \Phi; v \cdot \d \vx) \in \WF(f), \ \Phi^T v = 0 \Big \}.
\end{split}
\]
We can actually improve this since $h(\vx,\Phi,t)$ is identically zero when $t<\Phi^T \vx$ and so the above set can be restricted to $t \geq \Phi^T \vx$. This gives
\[\label{div_wf}
\begin{split}
& \WF(\mathcal{L}h) \subset W :=\Big \{(\vx,\Phi; v \cdot \d \vx) \ : (\vx; v \cdot \d \vx ) \in \WF(f) \Big \}
\\ &\hskip.25cm \bigcup \Big \{ (\vx,
\Phi; v \cdot \dd \vx+ \alpha (\Phi_\perp^T v) \d \phi) \ : \  (\vx+ \alpha \Phi; v \cdot \d \vx) \in \WF(f), \ \Phi^T v = 0, \ \alpha \geq 0 \Big \}.
\end{split}
\]
Let us also define $W'$ to be $W$ but with $\Phi$ replaced by $\Phi' = (\cos(\phi-\pi/2),\sin(\pi-\pi/2))^T$ (recall \eqref{vline} and following text). Using this notation, we have now proven the following theorem.
\begin{theorem}\label{v_wf}
The wavefront set of the V-line transform $\mathcal{V}_{a,b}f$, for $a,b\neq 0$ satisfies
\begin{equation}\label{VWF}
\WF\paren{ \mathcal{V}_{a,b}f } \subset W \cup W'.
\end{equation}
\end{theorem}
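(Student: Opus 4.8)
The plan is to exploit linearity to reduce everything to a single divergent beam transform, and then to track the wavefront set through the two elementary operations out of which $\mathcal{L}$ is built: a pullback of $f$ by a smooth submersion, and an integration (pushforward) in the fibre variable $t$. Since $\mathcal{V}_{a,b}f(\vx,\phi) = a\,Lf(\vx,\Phi) + b\,Lf(\vx,\Phi')$ and the wavefront set of a linear combination of distributions is contained in the union of the wavefront sets of the summands, it suffices to show $\WF(Lf(\cdot,\Phi))\subset W$ and, by the verbatim computation with $\Phi$ replaced by $\Phi'$, $\WF(Lf(\cdot,\Phi'))\subset W'$. The hypothesis $a,b\neq 0$ keeps both summands present, so the natural bound is the full union $W\cup W'$ rather than a proper subset.

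For the single term I would write $Lf(\vx,\Phi)=\mathcal{L}h(\vx,\Phi)=\int_{\mathbb{R}}h(\vx,\Phi,t)\,\dd t$ with $h(\vx,\Phi,t)=f(\vx+(t-\Phi^T\vx)\Phi)\,H_{\Phi^T\vx}(t)$, exactly as set up above. The first step is to compute $\WF(h)$. I would obtain the wavefront sets of the two factors from Hörmander's pullback theorem \cite[Theorem 8.2.4]{hormanderI}, applied to the smooth maps $(\vx,\phi,t)\mapsto\vx+(t-\Phi^T\vx)\Phi$ and $(\vx,\phi,t)\mapsto t-\Phi^T\vx$; the only care needed is the book-keeping of the transposed differentials, which is what produces the $\dd\phi$ and $\dd t$ components displayed above. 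I would then form the product via \cite[Theorem 8.2.10]{hormanderI}, giving the three-part bound on $\WF(h)$: a genuinely mixed covector, the pure-$f$ contribution, and the pure-Heaviside contribution.

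The subtle point, and the step I expect to be the main obstacle, is that \cite[Theorem 8.2.10]{hormanderI} requires the product $f(\cdots)\,H_{\Phi^T\vx}(t)$ to be well defined, i.e.\ that no covector in the wavefront set of one factor is the negative of a covector of the other at a common base point. The singular loci meet only on the corner set $t=\Phi^T\vx$, $\vx\in\text{ssupp}(f)$. There I would match components: requiring the covectors of the two factors to be opposite forces, through the $\dd t$-components, $a=\Phi^T v$, and substituting this into the $\dd\vx$-components forces $v=0$, which is impossible for a genuine wavefront direction of $f$. Hence the Hörmander condition holds, the product and thus $h$ are well defined, and the three-part bound is legitimate. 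This transversality check is where the geometry of the ray, rather than mere functoriality, actually enters.

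Finally I would push $\WF(h)$ forward through $\mathcal{L}$. Integration in $t$ is an FIO whose canonical relation $\mathcal{C}_{\mathcal{L}}$ retains exactly those covectors of $h$ with vanishing $\dd t$-component and forgets the fibre variable $t$, so $\WF(\mathcal{L}h)\subset\mathcal{C}_{\mathcal{L}}\WF(h)$. Setting the $\dd t$-component of the mixed covector to zero ($\Phi^T v-a=0$) collapses it, after substitution, to the ``direct'' family $\{(\vx,\Phi;v\cdot\dd\vx):(\vx;v\cdot\dd\vx)\in\WF(f)\}$; setting the $\dd t$-component $\Phi^T v$ of the pure-$f$ covector to zero produces the ``transported'' family, in which a singular point $\vx+\alpha\Phi$ of $f$ with conormal perpendicular to $\Phi$ contributes $(\vx,\Phi;v\cdot\dd\vx+\alpha(\Phi_\perp^T v)\dd\phi)$; the pure-Heaviside covector has $\dd t$-component $-a\neq 0$ and drops out entirely. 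The last refinement uses that $h$ vanishes identically for $t<\Phi^T\vx$, so only $\alpha=t-\Phi^T\vx\geq 0$ contributes, restricting the transported family to $\alpha\geq 0$ and yielding precisely $W$. Combining the two terms then gives $\WF(\mathcal{V}_{a,b}f)\subset W\cup W'$.
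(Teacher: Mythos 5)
Your proposal is correct and takes essentially the same route as the paper's own argument: the identical factorization $h(\vx,\Phi,t)=f(\vx+(t-\Phi^T\vx)\Phi)\,H_{\Phi^T\vx}(t)$, the same applications of H\"ormander's pullback theorem (8.2.4) and product theorem (8.2.10), the same pushforward through the FIO $\mathcal{L}$ retaining only covectors with vanishing $\mathrm{d}t$-component, the same restriction to $\alpha\geq 0$ from the support of $h$, and the same union over the two beams $\Phi$, $\Phi'$. The one point where you go beyond the paper is the explicit verification of the hypothesis of Theorem 8.2.10 (matching $\mathrm{d}t$- and $\mathrm{d}\vx$-components of opposing covectors on the corner set forces $v=0$, a contradiction); the paper applies the product theorem without recording this check, so your transversality argument is a genuine and correct addition rather than a deviation.
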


Note that $\widetilde{V_{a,b}}$ (see \eqref{smoothvline}) is defined to be $\mathcal{V}_{a,b}$ convolved with a kernel in $\vx$, and so does not spread the wave front set. Thus \eqref{VWF} also holds with $\mathcal{V}_{a,b}$ replaced by $\widetilde{\mathcal{V}_{ab}}$. Now let $\tilde{f}(\vx,\phi) = f(\vx)$ be the natural extension of $f$ to $\mathbb{R}^3$. By \eqref{div_wf} and Theorem \ref{v_wf}, it is possible for $\widetilde{V_{a,b}}f$ and hence $g = e^{-\widetilde{\mathcal{V}_{a,b}}f}$ to have singularities which occur in the same direction as $\tilde{f}$. In \cite{hormanderI}, wavefront sets of distributional products $fg$ are characterized in the case when $f$ and $g$ don't have singularities in the same direction. Thus, the classical theory of \cite{hormanderI} does not apply here and we require expansions of the theory to address Sobolev regularity of $fg$. We will do this later in section \ref{product_section}, using the generalized Sobolev spaces $H^{\alpha,r}$ defined in Definition \ref{gen_sobo}.

\subsection{Smoothing properties of $\mathcal{V}_{a,b}$}

We now analyze the smoothing properties of $\mathcal{V}_{a,b}$ on Sobolev scale.

Let 
\begin{equation}
\mathcal{V}_L f(\vx,\phi) = a\int_0^\nu f(\vx + t\Phi) \mathrm{d}t + b\int_0^\nu f(\vx + t\Phi'),
\end{equation}
for some $\nu > 0$, where $\Phi, \Phi'$ are as in Definition \ref{vline_def}. When $\nu\geq 2$, since $\text{supp}(f) \subset \{|\vx| < 1\} = B$, for $\vx\in B$, $\mathcal{V}_Lf(\vx,\phi) = \mathcal{V}_{a,b}f(\vx,\phi)$ for any fixed $\phi$. For $\nu$ large enough (depending on the size of the smoothing kernel $\varphi$), we have $\paren{\varphi \ast \mathcal{V}_Lf(\cdot,\phi)}(\vx) =  \widetilde{\mathcal{V}_L}f(\vx,\phi) =  \widetilde{\mathcal{V}_{a,b}}f(\vx,\phi)$ for $\vx \in B$. Therefore $fg = fe^{- \widetilde{\mathcal{V}_L}f} = fe^{- \widetilde{\mathcal{V}_{a,b}}f}$ and, for the purposes of this work where we are interested in smoothness of the product $fg$, we can equivalently consider the smoothness of $g = e^{- \widetilde{\mathcal{V}_L}f}$. 

We first discuss the smoothing properties of $V_Lf$. We have
\begin{equation} \label{Vlhat}
\widehat{\mathcal{V}_L f}(\xi,\phi) = \hat{f}(\xi)\paren{ a u(\xi\cdot\Phi) + b u(\xi\cdot \Phi') }= \hat{f}(\xi) w(\xi),
\end{equation}
where
\begin{equation}
u(y) = \int_0^\nu e^{i t y} \mathrm{d}t = -i\frac{e^{i \nu y} - 1}{y}.
\end{equation}
$u$ is bounded by $\nu$ and is $\mathcal{O}(|y|)$ as $|y| \rightarrow \infty$. For example, when $\nu = 1$, $u(y) = \sinc(y) + i\sin(\frac{y}{2}) \sinc(\frac{y}{2})$. We now have the following theorem.


\begin{theorem}
\label{V_smooth}
Let $\phi \in [-\pi,\pi]$ be fixed and suppose $f\in H^{\beta,\infty}(\mathbb{R}^2)$. Then, $\mathcal{V}_L f (\cdot,\phi) \in H^{\alpha,2}(\mathbb{R}^2)$ for any $\alpha < \beta - 1/2$.
\end{theorem}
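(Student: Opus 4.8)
The plan is to work entirely on the Fourier side using the multiplier representation \eqref{Vlhat}. Since $f \in H^{\beta,\infty}(\mathbb{R}^2)$ means precisely that $\abs{\hat f(\xi)} \leq C(1+\abs{\xi}^2)^{-\beta/2}$ for some constant $C$, and since $\widehat{\mathcal{V}_L f}(\xi,\phi) = \hat f(\xi) w(\xi)$ with $w(\xi) = a\,u(\xi\cdot\Phi) + b\,u(\xi\cdot\Phi')$, the squared $H^{\alpha,2}$-norm of $\mathcal{V}_L f(\cdot,\phi)$ is controlled by
\begin{equation}
\int_{\mathbb{R}^2} (1+\abs{\xi}^2)^{\alpha}\abs{\hat f(\xi)}^2 \abs{w(\xi)}^2 \d\xi \leq C^2 \int_{\mathbb{R}^2}(1+\abs{\xi}^2)^{\gamma}\abs{w(\xi)}^2\d\xi,
\end{equation}
where $\gamma := \alpha - \beta < -1/2$. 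Thus the theorem reduces to showing that this last weighted integral of $\abs{w}^2$ is finite whenever $\gamma < -1/2$.

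First I would split $\abs{w(\xi)}^2 \leq 2a^2 \abs{u(\xi\cdot\Phi)}^2 + 2b^2\abs{u(\xi\cdot\Phi')}^2$, so that it suffices to bound each term separately; by a rotation of the $\xi$ variable the two are of identical form, so I treat $\int_{\mathbb{R}^2}(1+\abs{\xi}^2)^{\gamma}\abs{u(\xi\cdot\Phi)}^2\d\xi$ after choosing coordinates with $\Phi = (1,0)$, i.e.\ $\xi\cdot\Phi = \xi_1$. The essential structural point is that $u(\xi_1)$ depends only on the component of $\xi$ along $\Phi$, so I integrate out the transverse variable $\xi_2$ first. Using Tonelli (the integrand is nonnegative) and the substitution $\xi_2 = \sqrt{1+\xi_1^2}\,s$,
\begin{equation}
\int_{-\infty}^\infty (1+\xi_1^2+\xi_2^2)^{\gamma}\d\xi_2 = C_\gamma\,(1+\xi_1^2)^{\gamma+1/2}, \qquad C_\gamma := \int_{-\infty}^\infty (1+s^2)^{\gamma}\d s,
\end{equation}
and the constant $C_\gamma$ is finite exactly because $\gamma < -1/2$. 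This is the step where the loss of one half derivative in the statement appears, so I would highlight it.

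It then remains to check convergence of the resulting one-dimensional integral $C_\gamma \int_{-\infty}^\infty \abs{u(\xi_1)}^2 (1+\xi_1^2)^{\gamma+1/2}\d\xi_1$. Here I would use the two bounds on $u$ recorded after \eqref{Vlhat}: $\abs{u(y)}\leq \nu$ and $\abs{u(y)} = \mathcal{O}(\abs{y}^{-1})$ as $\abs{y}\to\infty$ (from $u(y) = -i(e^{i\nu y}-1)/y$). Near $\xi_1 = 0$ the weight $(1+\xi_1^2)^{\gamma+1/2}$ is bounded and $\abs{u}^2\leq \nu^2$, so there is no difficulty. For large $\abs{\xi_1}$ the integrand is $\mathcal{O}(\abs{\xi_1}^{-2}\cdot\abs{\xi_1}^{2\gamma+1}) = \mathcal{O}(\abs{\xi_1}^{2\gamma - 1})$, which is integrable since $\gamma < -1/2 < 0$ forces $2\gamma - 1 < -1$. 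Combining the two terms of $\abs{w}^2$ gives finiteness of the weighted integral and hence $\mathcal{V}_L f(\cdot,\phi)\in H^{\alpha,2}(\mathbb{R}^2)$.

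The main obstacle---and the only genuinely non-routine point---is the anisotropy of the multiplier $w$: it fails to decay along the two rays perpendicular to $\Phi$ and to $\Phi'$, so a crude sup-norm bound $\abs{w}\leq (a+b)\nu$ would only yield convergence for $\gamma < -1$, i.e.\ $\alpha < \beta - 1$, which is weaker than claimed. The transverse-first integration is precisely what exploits the directional decay to recover the sharp threshold $\alpha < \beta - 1/2$; everything else is bookkeeping.
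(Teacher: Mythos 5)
Your proof is correct and follows essentially the same route as the paper's: both reduce to the Fourier multiplier form \eqref{Vlhat}, rotate so that $\xi\cdot\Phi=\xi_1$, use the pointwise bound $|\hat f(\xi)|^2\leq C(1+|\xi|^2)^{-\beta}$, and exploit the substitution $\xi_2=\sqrt{1+\xi_1^2}\,t$ to separate the transverse integral, whose convergence forces $\alpha<\beta-1/2$. The only differences are cosmetic (you apply the $\hat f$ bound before the change of variables rather than after, and you correctly state the decay $|u(y)|=\mathcal{O}(|y|^{-1})$, fixing a typo in the paper).
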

\begin{proof}
We have
\begin{equation}
\label{equ_V}
\|\widehat{\mathcal{V}_L f}(\cdot,\phi)\|^2_{H^{\alpha,2}(\mathbb{R}^2)} \leq a\int_{\mathbb{R}^2}|\hat{f}(\xi)|^2|u(\xi\cdot\Phi)|^2(1+|\xi|^2)^\alpha\mathrm{d}\xi+ b\int_{\mathbb{R}^2}|\hat{f}(\xi)|^2|u(\xi\cdot\Phi')|^2(1+|\xi|^2)^\alpha\mathrm{d}\xi.
\end{equation}
Let's focus on the first term. We have
\begin{equation}
\begin{split}
\int_{\mathbb{R}^2}|\hat{f}(\xi)|^2|u(\xi\cdot\Phi)|^2(1+|\xi|^2)^\alpha &= \int_{\mathbb{R}^2}|\hat{f}(\xi)|^2\frac{|e^{i\nu \Phi \cdot \xi} - 1|^2}{|\Phi \cdot \xi|^2} (1+|\xi|^2)^\alpha \mathrm{d}\xi\\
& = \int_{\mathbb{R}^2}|\hat{f}(Q\xi)|^2\frac{|e^{i\nu \xi_1} - 1|^2}{\xi_1^2} (1+|\xi|^2)^\alpha\mathrm{d}\xi,
\end{split}
\end{equation}
where $Q = [\Phi,\Phi_\perp]$ and we have made the substitution $\xi \to Q \xi$. 

Making the substitution $\xi_2 = (1 + \xi_1^2)^{\frac{1}{2}}t$ yields
\begin{equation}
\begin{split}
\int_{\mathbb{R}^2}&|\hat{f}(Q(\xi_1,\sqrt{1+\xi_1^2}t)^T)|^2\frac{|e^{i\nu \xi_1} - 1|^2}{\xi_1^2}(1+\xi_1^2)^{\alpha+1/2}(1+t^2)^\alpha\mathrm{d}\xi_1 \mathrm{d}t
\end{split}
\end{equation}
for any $r\geq 0$. Since $f\in H^{\beta,\infty}(\mathbb{R}^2)$ we have
$$|\hat{f}(\xi)|^2\leq c\frac{1}{(1+|\xi|^2)^\beta}$$
for almost every $\xi$. Then, the integral is bounded by
\begin{equation}
\begin{split}
c\int_{\mathbb{R}^2}\frac{1}{(1+\xi_1^2)^\beta(1+t^2)^\beta}\frac{|e^{i\nu \xi_1} - 1|^2}{\xi_1^2}(1+\xi_1^2)^{\alpha+1/2}(1+t^2)^\alpha\mathrm{d}\xi_1 \mathrm{d}t
\end{split}
\end{equation}
for some $c>0$. This converges if and only if
$$\int_0^\infty(1+t^2)^{\alpha - \beta}\mathrm{d}t$$
converges, which happens when $\alpha < \beta - 1/2$. The second term in \eqref{equ_V} can be bounded similarly.
\end{proof}
For example, if $f \in H^{1,\infty}(\mathbb{R}^2)$ (e.g., if $f = \chi_\Omega$ and $\Omega$ has a flat boundary), then $\mathcal{V}_L f(\cdot,\phi) \in H^{\alpha,2}(\mathbb{R}^2)$ for any $\alpha <1/2$. Such $f$ are of the same Sobolev order (see Theorem \ref{char_sobo}). If instead $f \in H^{3/2,\infty}(\mathbb{R}^2)$ (which is true if $\partial \Omega$ has non-zero curvature \cite{f1,f2}), then $\mathcal{V}_L f(\cdot,\phi) \in H^{\alpha,2}$ for any $\alpha <1$. Thus, in this case $\mathcal{V}_L$ smoothes $f$ by order $1/2$. However, unless we make assumptions about $\partial \Omega$, $\mathcal{V}_L$ may not smooth $f$ at all, which makes sense as when one of the rays of the V-line is tangent to a flat piece of $\partial \Omega$, a jump-type singularity can occur in $\mathcal{V}_Lf$. To formalize this further as a counterexample, let $f = \chi_{[-0.5,0.5]^2}$, let $\nu = 2$, let $\phi = \pi/2$ and $\psi = \pi/4$ (i.e., the opening angle of the V-line is $\pi/2$). Let $u \in C_0^\infty(B_\delta((0.5,-1)))$ be a smooth cutoff centered on $(0.5,-1)$. Then, for $\delta$ small enough, $u \mathcal{V}_L f = u \chi_{\{x_1 < 0.5\}}$, and thus $\mathcal{V}_L f$ is not in $H^{1/2}$ near $(0.5,-1)$ as the characteristic $\chi_{\{x_1 < 0.5\}}$ is not in $H^{1/2}$ near $(0.5,-1)$ (see Theorem \ref{char_sobo}). Therefore, in this example, $\mathcal{V}_L f$ is not in $H^{1/2}(\mathbb{R}^2)$ and is of the same Sobolev order as $f$. For further analysis on the smoothing properties of the V-line transform which may be of interest to the reader, see appendix \ref{add_vline}.

\begin{corollary}
Let $\phi \in [-\pi,\pi]$ be fixed, let $f \in H^{\beta,\infty}(\mathbb{R}^2)$ and let $\varphi \in H^{\lambda,\infty}(\mathbb{R}^2)$ (recall $\varphi$ is the kernel used in the definition of $\widetilde{\mathcal{V}_{a,b}}$, see \eqref{smoothvline}). Then, $\widetilde{\mathcal{V}_L} f (\cdot,\phi) \in H^{\alpha,2}(\mathbb{R}^2)$ for any $\alpha < \beta + \lambda - 1/2$.
\end{corollary}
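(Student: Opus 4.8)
The plan is to reduce the statement directly to Theorem~\ref{V_smooth} by absorbing the smoothing kernel $\varphi$ into the density. The key observation is that, for each fixed $\phi$, the operator $\mathcal{V}_L$ acts only in the $\vx$ variable and is translation invariant there, so it commutes with convolution by $\varphi$. Concretely, a change of variables in the integrals defining $\widetilde{\mathcal{V}_L}$ yields
\[
\widetilde{\mathcal{V}_L} f(\cdot,\phi) = \varphi \ast \paren{\mathcal{V}_L f(\cdot,\phi)} = \mathcal{V}_L\paren{\varphi \ast f}(\cdot,\phi),
\]
and I would verify this identity first; it is the only genuinely computational step and it is routine, amounting to interchanging the $t$-integral with the convolution integral and substituting $\vy = \vx_0 + t\Phi$ (and similarly for $\Phi'$).

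Next I would establish that $\varphi \ast f \in H^{\beta+\lambda,\infty}(\mathbb{R}^2)$. Since convolution in $\vx$ corresponds to multiplication of Fourier transforms, $\widehat{\varphi \ast f} = \hat{\varphi}\,\hat{f}$. By Definition~\ref{gen_sobo}, the hypotheses $f \in H^{\beta,\infty}$ and $\varphi \in H^{\lambda,\infty}$ translate into the pointwise bounds $|\hat{f}(\xi)| \leq c_1 (1+|\xi|^2)^{-\beta/2}$ and $|\hat{\varphi}(\xi)| \leq c_2 (1+|\xi|^2)^{-\lambda/2}$ for almost every $\xi$. Multiplying these gives $|\hat{\varphi}(\xi)\hat{f}(\xi)| \leq c_1 c_2 (1+|\xi|^2)^{-(\beta+\lambda)/2}$, which is precisely the statement that $(1+|\xi|^2)^{(\beta+\lambda)/2}\widehat{\varphi \ast f} \in L^\infty$, i.e. $\varphi \ast f \in H^{\beta+\lambda,\infty}(\mathbb{R}^2)$. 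Applying Theorem~\ref{V_smooth} to $\varphi \ast f$ in place of $f$, with $\beta$ replaced by $\beta + \lambda$, then gives $\mathcal{V}_L(\varphi \ast f)(\cdot,\phi) \in H^{\alpha,2}(\mathbb{R}^2)$ for every $\alpha < (\beta+\lambda) - 1/2$, and the commuting identity identifies the left side with $\widetilde{\mathcal{V}_L}f(\cdot,\phi)$, completing the proof.

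An equivalent route, bypassing the commuting identity, is to repeat the proof of Theorem~\ref{V_smooth} verbatim starting from $\widehat{\widetilde{\mathcal{V}_L}f}(\xi,\phi) = \hat{\varphi}(\xi)\hat{f}(\xi)\paren{a\,u(\xi\cdot\Phi) + b\,u(\xi\cdot\Phi')}$ and inserting the extra factor $|\hat{\varphi}(\xi)|^2 \leq c_2^2(1+|\xi|^2)^{-\lambda}$ into the integrand; the same substitution $\xi_2 = (1+\xi_1^2)^{1/2}t$ then produces the convergence condition $\int_0^\infty (1+t^2)^{\alpha-\beta-\lambda}\,\mathrm{d}t < \infty$, i.e. $\alpha < \beta + \lambda - 1/2$. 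I do not anticipate a serious obstacle: the only point needing care is the justification of the commuting identity (equivalently, the interchange of integration orders), which is legitimate because $f$ has compact support and $\varphi \in L^1(\mathbb{R}^2)$ with $\int \varphi = 1$, so every integral converges absolutely and Fubini applies. No microlocal or functional-analytic input beyond Theorem~\ref{V_smooth} is required.
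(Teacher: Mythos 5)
Your proposal is correct and is essentially the paper's argument: the paper's proof is the one-line remark that the corollary is ``a simple consequence of Theorem \ref{V_smooth},'' and both of your routes --- absorbing $\varphi$ into $f$ via $|\hat{\varphi}\hat{f}| \leq c\,(1+|\xi|^2)^{-(\beta+\lambda)/2}$ so that Theorem \ref{V_smooth} applies with $\beta+\lambda$ in place of $\beta$, or equivalently inserting the factor $|\hat{\varphi}(\xi)|^2$ into that theorem's proof --- are faithful, correct instantiations of exactly that reduction. No gap; the Fubini/commutation point you flag is harmless in the paper's setting (where $f$ is compactly supported and $\varphi \in L^1$) and is bypassed entirely by the Fourier-multiplier route, which is the same identity $\widehat{\widetilde{\mathcal{V}_L}f} = \hat{\varphi}\hat{f}w$ the paper itself uses in Theorem \ref{thm_V_smth}.
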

\begin{proof}
This is a simple consequence of Theorem \ref{V_smooth}.
\end{proof}

We now have a variant of this theorem which relates standard Sobolev regularity of $f$ to that of $\widetilde{\mathcal{V}_L f }$.
\begin{theorem}
\label{thm_V_smth}
Let $\phi \in [-\pi,\pi]$ be fixed, $\varphi \in H^{\lambda,\infty}(\mathbb{R}^2)$, and $f \in H^{\alpha,2}(\mathbb{R}^2)$. Then, $\widetilde{\mathcal{V}_L f }(\cdot,\phi) \in H^{\alpha + \lambda,2}(\mathbb{R}^2)$.
\end{theorem}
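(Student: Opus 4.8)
The plan is to reduce the statement to a pointwise Fourier multiplier estimate, with the whole gain of $\lambda$ orders extracted from the decay of $\hat{\varphi}$. Since $\widetilde{\mathcal{V}_L f}(\cdot,\phi) = \varphi \ast \mathcal{V}_L f(\cdot,\phi)$ by \eqref{smoothvline}, the convolution theorem together with \eqref{Vlhat} gives
\[
\widehat{\widetilde{\mathcal{V}_L f}}(\xi,\phi) = \hat{\varphi}(\xi)\,\hat{f}(\xi)\,w(\xi), \qquad w(\xi) = a\,u(\xi\cdot\Phi) + b\,u(\xi\cdot\Phi').
\]
First I would record the two multiplier bounds that drive everything. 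The V-line factor $w$ is \emph{bounded}: since $|u(y)| \leq \nu$ for every $y$, we have $|w(\xi)| \leq (|a|+|b|)\nu =: M$ uniformly in $\xi$ (and in $\phi$). Second, the hypothesis $\varphi \in H^{\lambda,\infty}(\mathbb{R}^2)$ means, unwinding Definition \ref{gen_sobo}, that $(1+|\xi|^2)^{\lambda/2}\hat{\varphi}(\xi) \in L^\infty(\mathbb{R}^2)$, so there is a constant $C>0$ with $|\hat{\varphi}(\xi)| \leq C(1+|\xi|^2)^{-\lambda/2}$ for almost every $\xi$.

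With these bounds in hand I would estimate the target norm directly. Writing out the $H^{\alpha+\lambda,2}$ norm via \eqref{HRnnorm} and substituting the Fourier expression above,
\[
\|\widetilde{\mathcal{V}_L f}(\cdot,\phi)\|_{H^{\alpha+\lambda,2}}^2 = \int_{\mathbb{R}^2}(1+|\xi|^2)^{\alpha+\lambda}\,|\hat{\varphi}(\xi)|^2\,|\hat{f}(\xi)|^2\,|w(\xi)|^2\,\mathrm{d}\xi.
\]
Inserting $|\hat{\varphi}(\xi)|^2 \leq C^2(1+|\xi|^2)^{-\lambda}$ and $|w(\xi)|^2 \leq M^2$, the weight $(1+|\xi|^2)^{\lambda}$ coming from the target order cancels exactly against the decay of $\hat{\varphi}$, leaving
\[
\|\widetilde{\mathcal{V}_L f}(\cdot,\phi)\|_{H^{\alpha+\lambda,2}}^2 \leq C^2 M^2 \int_{\mathbb{R}^2}(1+|\xi|^2)^{\alpha}\,|\hat{f}(\xi)|^2\,\mathrm{d}\xi = C^2 M^2\,\|f\|_{H^{\alpha,2}}^2,
\]
which is finite because $f \in H^{\alpha,2}(\mathbb{R}^2)$. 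This proves the claim, and since $M$ and $C$ are independent of $\phi$ the bound is uniform in the fixed angle.

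There is essentially no hard step here: the argument is a clean Fourier multiplier computation and I do not expect any genuine obstacle. The one point worth emphasizing, and the actual content of the theorem, is \emph{where} the smoothing comes from. The intrinsic V-line multiplier $w$ contributes no decay at all, so on its own it provides no Sobolev gain; this is consistent with the half-order \emph{loss} seen for the unsmoothed transform in Theorem \ref{V_smooth}. The gain of exactly $\lambda$ orders is supplied entirely by the convolution kernel $\varphi$, whose $H^{\lambda,\infty}$ regularity is precisely the pointwise decay $|\hat{\varphi}(\xi)| \lesssim (1+|\xi|^2)^{-\lambda/2}$ needed to absorb the extra weight. The boundedness of $w$ is used only to guarantee that the V-line structure does no harm.
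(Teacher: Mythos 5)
Your proof is correct and follows essentially the same route as the paper's: factor the Fourier transform as $\hat{\varphi}\hat{f}w$ via \eqref{Vlhat}, use boundedness of the V-line multiplier $w$, and absorb the weight $(1+|\xi|^2)^{\lambda}$ into the pointwise decay of $\hat{\varphi}$ guaranteed by $\varphi \in H^{\lambda,\infty}(\mathbb{R}^2)$. Your write-up simply makes explicit (the constants, the uniformity in $\phi$, and the source of the smoothing) what the paper's terser three-line estimate leaves implicit.
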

\begin{proof}
We have (recall \eqref{Vlhat})
\begin{equation}
\widehat{\widetilde{\mathcal{V}_L}f}(\xi,\phi) = \hat{\varphi}(\xi)\hat{f}(\xi) w(\xi)
\end{equation}
and $|\widehat{\widetilde{V_L}f}(\xi,\phi)| \leq c |\hat{\varphi}(\xi)| |\hat{f}(\xi)|$, since $w$ is bounded.

Let $\widetilde{V_L}f = \widetilde{V_L}f(\cdot,\phi)$. We have
\begin{equation}
\begin{split}
\|\widetilde{\mathcal{V}_L}f\|^2_{H^{\alpha +\lambda,2}(\mathbb{R}^2)} &\leq c \int_{\mathbb{R}^2}|\hat{\varphi}(\xi)|^2|\hat{f}(\xi)|^2(1+|\xi|^2)^{\alpha+\lambda}\mathrm{d}\xi\\
& \leq c' \int_{\mathbb{R}^2}|\hat{f}(\xi)|^2(1+|\xi|^2)^{\alpha}\mathrm{d}\xi\\
& \leq c' \|f\|^2_{H^{\alpha,2}(\mathbb{R}^2)}
\end{split}
\end{equation}
This completes the proof.
\end{proof}


\subsection{Regularity of distributional products}
\label{product_section}
In this section, we calculate the regularity of $fg$ using the modified Sobolev spaces introduced in Definition \ref{gen_sobo}. We now prove a number of results regarding the $H^{\alpha,p}$ spaces leading up to our main theorem regarding distributional products.

\begin{lemma} \label{lem:Hsa_cont}
    Suppose $1 \leq p$, $\widetilde{p} \leq \infty$ and
\begin{equation}\label{eq:bcond}
    n\frac{\widetilde{p} - 1}{\widetilde{p}p}< \beta
\end{equation}
where we define $\infty/\infty = 1$ and $\infty/\infty^2 = 0$.
Then
\[H^{\alpha+\beta,\widetilde{p}p}(\mathbb{R}^n) \subseteq H^{\alpha,p}(\mathbb{R}^n).
\]
\end{lemma}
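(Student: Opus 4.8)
The plan is to prove the inclusion directly from the Fourier-side definition in Definition \ref{gen_sobo}, using a single application of Hölder's inequality. Given $f \in H^{\alpha+\beta,\widetilde{p}p}(\rn)$, I would factor the weighted transform whose $L^p$-norm governs membership in $H^{\alpha,p}$ as a product,
\[
(1+\abs{\xi}^2)^{\alpha/2}\hat f(\xi) = \underbrace{(1+\abs{\xi}^2)^{-\beta/2}}_{=:A(\xi)}\cdot \underbrace{(1+\abs{\xi}^2)^{(\alpha+\beta)/2}\hat f(\xi)}_{=:B(\xi)},
\]
so that, by definition, $\norm{f}_{H^{\alpha,p}(\rn)} = \norm{AB}_p$ while $\norm{B}_{\widetilde{p}p} = \norm{f}_{H^{\alpha+\beta,\widetilde{p}p}(\rn)} < \infty$. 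The whole argument then reduces to bounding $\norm{AB}_p$ in terms of $\norm{B}_{\widetilde{p}p}$.

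Next I would apply Hölder's inequality with the exponent pair $q$ and $\widetilde{p}p$ determined by $1/p = 1/q + 1/(\widetilde{p}p)$, which solves to $q = \widetilde{p}p/(\widetilde{p}-1)$. This yields $\norm{AB}_p \le \norm{A}_q\,\norm{B}_{\widetilde{p}p}$, so it remains only to check that $A \in L^q(\rn)$. Since $A(\xi)^q = (1+\abs{\xi}^2)^{-\beta q/2}$, the integral $\int_{\rn} A(\xi)^q\,\d\xi$ converges precisely when $\beta q > n$; substituting $q = \widetilde{p}p/(\widetilde{p}-1)$ turns this into $\beta > n(\widetilde{p}-1)/(\widetilde{p}p)$, which is exactly hypothesis \eqref{eq:bcond}. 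Hence $\norm{A}_q < \infty$ and $\norm{f}_{H^{\alpha,p}(\rn)} \le \norm{A}_q\,\norm{f}_{H^{\alpha+\beta,\widetilde{p}p}(\rn)}$, giving both the inclusion and the boundedness of the embedding with a constant depending only on $n$, $\beta$, $p$, $\widetilde{p}$.

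The only genuine care is needed at degenerate values of the exponents, where the conventions $\infty/\infty = 1$ and $\infty/\infty^2 = 0$ come in. When $\widetilde{p}=1$ one has $q=\infty$, the threshold reads $\beta > 0$, and the inequality collapses to the trivial pointwise bound $A \le 1$. When $\widetilde{p}=\infty$ the Hölder relation degenerates to $q = p$, the threshold becomes $\beta > n/p$, and the computation of $\norm{A}_q$ is unchanged. When $p=\infty$ both $q$ and $\widetilde{p}p$ collapse to $\infty$, the threshold becomes $\beta > 0$, and the estimate is just $\norm{AB}_\infty \le \norm{A}_\infty\norm{B}_\infty$ with $A$ bounded. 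In each limiting case the stated conventions force $n(\widetilde{p}-1)/(\widetilde{p}p)$ to agree with the true integrability threshold for $A$.

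I do not anticipate a serious analytic obstacle: the entire content is this one Hölder split together with the elementary fact that $(1+\abs{\xi}^2)^{-s}$ is integrable on $\rn$ if and only if $2s > n$. The point deserving the most attention—and where I would concentrate the verification—is the bookkeeping: confirming that the exponent $q = \widetilde{p}p/(\widetilde{p}-1)$ produced by Hölder gives an integrability threshold \emph{identical} to \eqref{eq:bcond}, and that the boundary exponents are handled consistently with the authors' $\infty$-conventions rather than through any nontrivial estimate.
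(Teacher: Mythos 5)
Your proof is correct and takes essentially the same route as the paper's: both factor $(1+|\xi|^2)^{\alpha/2}\hat f$ into $(1+|\xi|^2)^{-\beta/2}$ times $(1+|\xi|^2)^{(\alpha+\beta)/2}\hat f$, apply H\"older's inequality, and observe that hypothesis \eqref{eq:bcond} is exactly the integrability threshold for the weight $(1+|\xi|^2)^{-\beta/2}$ (your generalized H\"older with exponents $q = \widetilde{p}p/(\widetilde{p}-1)$ and $\widetilde{p}p$ is precisely the paper's standard H\"older with conjugate pair $\widetilde{p},\widetilde{q}$ applied after raising to the $p$-th power, since $q = p\widetilde{q}$). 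If anything, your explicit verification of the boundary exponents $p,\widetilde{p} \in \{1,\infty\}$ is more complete than the paper's, which dismisses those cases as provable ``in a similar way.''
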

\begin{proof}
    Let us first suppose $1< p, \widetilde{p}<\infty$. If
    \begin{equation}\label{eq:qt}
    \widetilde{q}= \frac{\widetilde{p}}{\widetilde{p}-1} \Leftrightarrow \frac{1}{\widetilde{p}}+ \frac{1}{\widetilde{q}} = 1,
    \end{equation}
    then using H\"older's inequality
    \[
    \begin{split}
    \|\widehat{f}(1+|\xi|^2)^{\frac{\alpha}{2}}\|^p_p &  = \| \widehat{f} (1+|\xi|^2)^{\frac{\alpha+\beta}{2}} (1+|\xi|^2)^{-\frac{\beta}{2}}\|^p_p\\
    & = \| |\widehat{f}|^p (1+|\xi|^2)^{\frac{p(\alpha+\beta)}{2}} (1+|\xi|^2)^{-\frac{p\beta}{2}}\|_1\\
    & \leq \||\widehat{f}|^p (1+|\xi|^2)^{\frac{p(\alpha+\beta)}{2}}\|_{\widetilde{p}} \| (1+|\xi|^2)^{-\frac{p\beta}{2}}\|_{\widetilde{q}}\\
    & \leq \| |\widehat{f}|^{\widetilde{p}p} (1+|\xi|^2)^{\frac{\widetilde{p}p(\alpha+\beta)}{2}}\|_{1}^{1/\widetilde{p}}\| (1+|\xi|^2)^{-\frac{p\beta}{2}}\|_{\widetilde{q}}\\
    & \leq \|f\|_{H^{\alpha+\beta,\widetilde{p}p}}^p \| (1+|\xi|^2)^{-\frac{p\beta}{2}}\|_{\widetilde{q}}.
    \end{split}
    \]
    If
    \begin{equation}\label{eq:ineq}
    n-1-\widetilde{q}p\beta < -1 \Leftrightarrow n < \widetilde{q}p\beta,
    \end{equation}
    then $\| (1+|\xi|^2)^{-\frac{p\beta}{2}}\|_{\widetilde{q}}$ is finite. Using \eqref{eq:qt} in \eqref{eq:bcond} implies \eqref{eq:ineq} and so completes the proof for this case. 
    
    The special cases when $p$ or $\widetilde{p}$ equal to $1$ or $\infty$ can be proven in a similar way.
\end{proof}

\begin{theorem}\label{char_sobo}
    Let $\Omega \subset \mathbb{R}^n$ be a bounded set with smooth boundary and let $u\in C^\infty(\mathbb{R}^n)$, with $u > 0$. Then $u\chi_\Omega \in H^{\alpha,2}(\mathbb{R}^n)$ if and only if $\alpha < 1/2$ and in $H^{\alpha,1}(\mathbb{R}^n)$ if $\alpha < (1-n)/2$.
\end{theorem}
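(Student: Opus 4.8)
The plan is to treat the two claims separately, reducing the weighted case to the bare characteristic function and extracting the fractional threshold from the Gagliardo form of the Sobolev seminorm in Definition \ref{sobo_spaces}.

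\emph{The $p=2$ statement.} First I would remove the weight $u$. Since $u\in C^\infty$ with $u>0$, it is bounded with bounded derivatives on a neighbourhood of the compact set $\overline{\Omega}$, and so is $1/u$ there; choosing cut-offs in $C_c^\infty(\mathbb{R}^n)$ that agree with $u$ and with $1/u$ on such a neighbourhood, multiplication by either is bounded on $H^{\alpha,2}(\mathbb{R}^n)$ for every $\alpha\in[0,1)$, and the two operations undo each other on $u\chi_\Omega$ and $\chi_\Omega$. Hence $u\chi_\Omega\in H^{\alpha,2}\Leftrightarrow \chi_\Omega\in H^{\alpha,2}$, and it suffices to analyse $\chi_\Omega$. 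For $\alpha\le 0$ membership is automatic, since $\chi_\Omega\in L^2=H^0\subseteq H^{\alpha,2}$; for $\alpha\ge 1/2$ the monotonicity $H^{\alpha,2}\subseteq H^{1/2,2}$ reduces non-membership to the single value $\alpha=1/2$. So the whole content lies in the range $0\le\alpha<1$, where I would pass to the equivalent Gagliardo seminorm $|\chi_\Omega|^2_{H^\alpha}=\int\!\!\int |x-y|^{-(n+2\alpha)}|\chi_\Omega(x)-\chi_\Omega(y)|^2\,dx\,dy$ (equivalence noted after Definition \ref{sobo_spaces}).

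Because $\chi_\Omega$ takes only the values $0$ and $1$, the integrand equals $|x-y|^{-(n+2\alpha)}$ exactly when one point lies in $\Omega$ and the other in $\Omega^c$, so the seminorm is $2\int_\Omega\int_{\Omega^c}|x-y|^{-(n+2\alpha)}\,dy\,dx$. For the upper bound set $s(x)=d(x,\partial\Omega)$ for $x\in\Omega$; any segment from $x$ to an exterior point crosses $\partial\Omega$, so $\Omega^c\subseteq\{|x-y|\ge s(x)\}$ and the inner integral is at most $\int_{|z|\ge s(x)}|z|^{-(n+2\alpha)}\,dz=C\,s(x)^{-2\alpha}$. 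Integrating over a tubular neighbourhood of $\partial\Omega$, where $dx\approx d\sigma\,ds$, gives a bound $\lesssim\int_0^\delta s^{-2\alpha}\,ds$, finite iff $\alpha<1/2$, while the part of $\Omega$ at distance $\ge\delta$ contributes a finite amount for $\alpha>0$. For the lower bound, smoothness of $\partial\Omega$ guarantees that near each boundary point the exterior contains a fixed solid cone, so the inner integral is $\gtrsim s(x)^{-2\alpha}$ uniformly and the seminorm is $\gtrsim\int_0^\delta s^{-2\alpha}\,ds=\infty$ for $\alpha\ge 1/2$; since the integrand is nonnegative there is no cancellation. This yields $\chi_\Omega\in H^{\alpha,2}$ iff $\alpha<1/2$, and with the reduction the same for $u\chi_\Omega$.

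\emph{The $p=1$ statement.} Here I would invoke Lemma \ref{lem:Hsa_cont}. Given $\alpha<(1-n)/2$, the inequality $\tfrac12-\alpha>\tfrac n2$ lets me fix $\beta$ with $\tfrac n2<\beta<\tfrac12-\alpha$; then $\alpha+\beta<\tfrac12$, so by the $p=2$ result $u\chi_\Omega\in H^{\alpha+\beta,2}(\mathbb{R}^n)$. Applying Lemma \ref{lem:Hsa_cont} with $p=1$ and $\widetilde p=2$, the hypothesis $n\tfrac{\widetilde p-1}{\widetilde p p}=\tfrac n2<\beta$ is satisfied, so $H^{\alpha+\beta,2}\subseteq H^{\alpha,1}$ and therefore $u\chi_\Omega\in H^{\alpha,1}(\mathbb{R}^n)$.

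\emph{Main obstacle.} The delicate point is the lower bound in the Gagliardo seminorm: I must produce an exterior cone of size bounded below uniformly along the compact surface $\partial\Omega$ so that the divergence at $\alpha=1/2$ is genuine, and I must justify the tubular-neighbourhood change of variables cleanly. The weight reduction also quietly relies on $u>0$ on $\overline{\Omega}$ to control $1/u$; everything else is routine.
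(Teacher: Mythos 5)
Your proposal is correct, and its second half coincides with the paper's own argument: the paper also deduces the $H^{\alpha,1}$ claim from Lemma \ref{lem:Hsa_cont} with $p=1$, $\widetilde{p}=2$, exactly as you do (your explicit choice of $\beta\in(n/2,\,1/2-\alpha)$ is the detail the paper leaves implicit). Where you diverge is the $p=2$ claim: the paper disposes of it in one line by citing the well-known characteristic-function result in Natterer (page 92), whereas you prove it from scratch --- first reducing $u\chi_\Omega$ to $\chi_\Omega$ by multiplication with compactly supported smooth versions of $u$ and $1/u$, then running the Gagliardo-seminorm computation with the distance-function upper bound and the uniform exterior-cone lower bound. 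Your route costs more work but buys self-containedness and, notably, an explicit proof of the ``only if'' direction (divergence of the seminorm at $\alpha=1/2$), which in the paper is absorbed into the citation; it is also consonant with the paper's own later use of exactly this kind of seminorm lower bound in the proof of Theorem \ref{main_thm_1}. Two small points to tidy: the weight reduction should note that $u>0$ on all of $\mathbb{R}^n$ (as the theorem assumes) makes $\psi/u$ smooth for any cutoff $\psi$ equal to $1$ near $\overline{\Omega}$; and your upper-bound estimate $\int_{|z|\ge s(x)}|z|^{-(n+2\alpha)}\,dz = C\,s(x)^{-2\alpha}$ requires $\alpha>0$, so the Gagliardo analysis really covers $0<\alpha<1$, with $\alpha\le 0$ handled (as you already do) by the $L^2$ inclusion.
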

\begin{proof}
The first part of the theorem, i.e., $u\chi_\Omega \in H^{\alpha,2}(\mathbb{R}^n)$ for $\alpha < 1/2$, is a well known result regarding characteristic functions \cite[page 92]{natterer}. The second part follows from Lemma \ref{lem:Hsa_cont} setting $p = 1$, $\tilde{p} = 2$.
\end{proof}

We now have the theorem which addresses composition of $H^{\alpha,p}$ functions with the exponential function.

\begin{theorem}\label{thm_sep}
Let $f = u\chi_{\Omega}$, where $\Omega \subset B$ is a compact domain with smooth boundary, and $u \in C_c^\infty(\mathbb{R}^2)$ with $u > 0$ on $\Omega$. Let $\varphi\in H^{\lambda,\infty}(\mathbb{R}^2)$. Then, for any smooth cutoff function $\psi \in C_c^\infty(\mathbb{R}^2)$, $\psi g \in H^{\alpha + \lambda,2}(\mathbb{R}^2)$ and $\psi g \in H^{\alpha + \lambda - 1,1}(\mathbb{R}^2)$ for any $\alpha < 1/2$, where $g = e^{ -\widetilde{\mathcal{V}_L}f(\cdot, \phi) }$ and $\phi \in [-\pi, \pi]$ is fixed.
\end{theorem}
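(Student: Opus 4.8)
The plan is to construct $g = e^{-\widetilde{\mathcal{V}_L}f(\cdot,\phi)}$ in three stages: first control the regularity of the exponent $\widetilde{\mathcal{V}_L}f(\cdot,\phi)$, then pass that regularity through the exponential using the composition Theorem \ref{sobo}, and finally deduce the $L^1$-type estimate by Sobolev embedding. For the first stage, Theorem \ref{char_sobo} gives $f = u\chi_\Omega \in H^{\alpha,2}(\mathbb{R}^2)$ for every $\alpha < 1/2$, and Theorem \ref{thm_V_smth} then upgrades this to $\widetilde{\mathcal{V}_L}f(\cdot,\phi) \in H^{\alpha+\lambda,2}(\mathbb{R}^2)$, since $\varphi \in H^{\lambda,\infty}(\mathbb{R}^2)$.

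The main obstacle is the second stage, because the composition Theorem \ref{sobo} requires the outer function to lie in $C^\infty(\mathbb{R})$ with every derivative bounded on all of $\mathbb{R}$, a condition that $x \mapsto e^{-x}$ fails as $x \to -\infty$. The observation that resolves this is that $\widetilde{\mathcal{V}_L}f \geq 0$: indeed $f = u\chi_\Omega \geq 0$ (as $u > 0$ on $\Omega$), the weights $a,b$ are positive, and the kernel $\varphi > 0$, so $\widetilde{\mathcal{V}_L}f = \varphi \ast \mathcal{V}_L f \geq 0$ pointwise. Hence I would fix a function $\gamma \in C^\infty(\mathbb{R})$ that agrees with $e^{-x}$ on $[0,\infty)$ and is modified on $(-\infty,0)$ so that $\gamma$ and all its derivatives are bounded on $\mathbb{R}$; since the range of $\widetilde{\mathcal{V}_L}f$ lies in $[0,\infty)$, we have $g = \gamma \circ \widetilde{\mathcal{V}_L}f$ exactly. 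Choosing a compact domain $\Omega'$ with smooth boundary whose interior contains $\text{supp}(\psi)$, restricting $\widetilde{\mathcal{V}_L}f$ to $\Omega'$, and applying Theorem \ref{sobo} then yields $g \in H^{\alpha+\lambda,2}(\Omega')$. The rationality hypothesis of Theorem \ref{sobo} is harmless: for any $\alpha < 1/2$ I pick a rational $r$ and a real $\alpha'$ with $\alpha + \lambda < r < \alpha' + \lambda$ and $\alpha < \alpha' < 1/2$, apply the theorem at the rational order $r$, and use the nesting $H^{r,2} \subset H^{\alpha+\lambda,2}$.

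Multiplying the local result by the smooth compactly supported cutoff $\psi$ (supported in the interior of $\Omega'$) then gives $\psi g \in H^{\alpha+\lambda,2}(\mathbb{R}^2)$, which is the first assertion. For the second assertion I would invoke the embedding Lemma \ref{lem:Hsa_cont} with $n = 2$, $p = 1$ and $\widetilde p = 2$: the hypothesis $n(\widetilde p - 1)/(\widetilde p p) = 1 < \beta$ forces $\beta > 1$, so writing the target order as $\alpha + \lambda - 1 = (\alpha'+\lambda) - \beta$ with $\beta = \alpha' - \alpha + 1$ and choosing $\alpha < \alpha' < 1/2$ makes $\beta > 1$. Applying the first assertion with $\alpha'$ gives $\psi g \in H^{\alpha'+\lambda,2}(\mathbb{R}^2) \subseteq H^{\alpha+\lambda-1,1}(\mathbb{R}^2)$, as claimed. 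The only delicate points are the positivity argument that legitimizes the truncated exponential $\gamma$, and the bookkeeping that keeps every Sobolev order strictly below $\lambda + 1/2$ while respecting the strict inequalities in Theorem \ref{sobo} and Lemma \ref{lem:Hsa_cont}.
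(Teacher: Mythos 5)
Your proposal is correct and follows essentially the same route as the paper's proof: Theorem \ref{char_sobo} plus Theorem \ref{thm_V_smth} to control the exponent, Theorem \ref{sobo} to pass through the exponential, and Lemma \ref{lem:Hsa_cont} with $p=1$, $\widetilde{p}=2$ for the $H^{\alpha+\lambda-1,1}$ claim. In fact you are more careful than the paper on three points it glosses over --- the truncation of $e^{-x}$ justified by positivity of $\widetilde{\mathcal{V}_L}f$ (the paper only hints at this by writing $\gamma(x)=e^{-x}:[0,\infty)\to[0,1]$), the rationality hypothesis of Theorem \ref{sobo}, and the strict inequality $\beta>1$ in Lemma \ref{lem:Hsa_cont}, which you handle by the intermediate order $\alpha<\alpha'<1/2$.
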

\begin{proof}
Let $\widetilde{\mathcal{V}_L}f = \widetilde{\mathcal{V}_L}f(\cdot, \phi)$. By Theorem \ref{char_sobo}, $f$ is in $H^{\alpha,2}(\mathbb{R}^2)$ for $\alpha <1/2$, and by Theorem \ref{thm_V_smth}, $\widetilde{\mathcal{V}_L}f \in H^{\alpha + \lambda,2}(\mathbb{R}^2)$. Now, applying Theorem \ref{sobo}, setting $\gamma(x) = e^{-x} : [0,\infty) \to [0,1]$, yields $\psi \paren{ \gamma \circ \paren{ \widetilde{\mathcal{V}_L}f } }= \psi g\in H^{\alpha + \lambda,2}(\mathbb{R}^2)$ for any $\alpha < 1/2$. The second part of the proof, namely that $\psi g \in H^{\alpha + \lambda - 1,1}(\mathbb{R}^2)$ for any $\alpha < 1/2$, follows from Lemma \ref{lem:Hsa_cont} setting $p = 1$, $\tilde{p} = 2$.
\end{proof}

\noindent We now have our main theorem regarding the regularity of distributional products.

\begin{theorem}\label{mainprod}
    Suppose that $f$ and $g$ are distributions on $\mathbb{R}^n$,  $\alpha_f + \alpha_g \geq 0$, $1 \leq p, q, r \leq \infty$ are such that
    \begin{equation}\label{eq:Young}
        \frac{1}{p} + \frac{1}{q} = \frac{1}{r} + 1,
    \end{equation}
    and $\vx_0 \in \mathbb{R}^n$. Then
    \begin{enumerate}
        \item
        \label{item:1.7.0}
        if $f \in H^{\alpha_f,p}(\mathbb{R}^n)$, $g \in H^{\alpha_g,q}(\mathbb{R}^n)$, then $fg$ is a well defined element of $H^{\min\{\alpha_f,\alpha_g\},r}(\mathbb{R}^n)$.
        \item \label{item:1.7.2} if $\xi_0 \notin WF_{\vx_0}(f) \cup WF_{\vx_0}(g)$, and $f \in H^{\alpha_f,p}$ and $g \in H^{\alpha_g,q}$ near $\vx_0$, then $fg \in H^{\alpha_f + \alpha_g + \min\{\alpha_f,\alpha_g,0\},r}$ near $(\vx_0,\xi_0)$.
        \item \label{item:1.7.3} if $\xi_0 \notin WF_{\vx_0}(f)$, $\alpha_g \geq 0$, $f \in H^{\alpha_f,p}$ and $g \in H^{\alpha_g,q}$ near $\vx_0$, then $fg \in H^{\alpha_g+\min\{0,\alpha_f\},r}$ near $(\vx_0,\xi_0)$.
    \end{enumerate}
\end{theorem}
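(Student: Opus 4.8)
The engine for all three parts is the Fourier‑side identity $\mathcal F(fg)=(2\pi)^{-n}\hat f\ast\hat g$ together with Young's convolution inequality, for which the exponent relation \eqref{eq:Young} is exactly the hypothesis making $L^p\ast L^q\hookrightarrow L^r$. I would also use Peetre's inequality in the form $\langle\xi\rangle^{s}\le C_s\langle\eta\rangle^{|s|}\langle\xi-\eta\rangle^{s}$ (writing $\langle\xi\rangle=(1+|\xi|^2)^{1/2}$) to shift powers of $\langle\cdot\rangle$ between the two convolution factors.

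For part~(\ref{item:1.7.0}) the plan is to write $\hat f(\eta)=\langle\eta\rangle^{-\alpha_f}F(\eta)$ and $\hat g(\zeta)=\langle\zeta\rangle^{-\alpha_g}G(\zeta)$ with $F\in L^p$, $G\in L^q$ by definition of the spaces; since $\alpha_f$ or $\alpha_g$ may be negative, these are genuine functions in $L^p_{\mathrm{loc}}$, so the convolution defining $\widehat{fg}$ is absolutely convergent once the estimate below is in place, which is what ``well defined'' means here. Setting $s=\min\{\alpha_f,\alpha_g\}$, the key point is the pointwise kernel bound $\langle\xi\rangle^{s}\langle\eta\rangle^{-\alpha_f}\langle\xi-\eta\rangle^{-\alpha_g}\le C$, valid for all $\xi,\eta$ precisely because $\alpha_f+\alpha_g\ge0$: reducing by symmetry to $\alpha_f\le\alpha_g$, one checks it from Peetre's inequality, splitting into the cases $\alpha_f\ge0$ and $\alpha_f<0\le\alpha_g$ and invoking $\alpha_f+\alpha_g\ge0$ in the latter. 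Given this bound, $|\langle\xi\rangle^{s}\widehat{fg}(\xi)|\le C(|F|\ast|G|)(\xi)$, and Young's inequality yields $\langle\cdot\rangle^{s}\widehat{fg}\in L^r$, i.e.\ $fg\in H^{s,r}$.

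For the microlocal statements (\ref{item:1.7.2}) and (\ref{item:1.7.3}) I would first localise: choose one cutoff $\varphi\in C_c^\infty$ with $\varphi(\vx_0)\ne0$, supported so small that $\varphi f\in H^{\alpha_f,p}$ and $\varphi g\in H^{\alpha_g,q}$ (multiplication by a smooth compactly supported function preserves these spaces), and so that the hypothesis $\xi_0\notin \WF_{\vx_0}(f)$ (and, in (\ref{item:1.7.2}), $\xi_0\notin\WF_{\vx_0}(g)$) upgrades to rapid decay of $\mathcal F(\varphi f)$ (resp.\ $\mathcal F(\varphi g)$) in a conic neighbourhood $\Gamma_f$ (resp.\ $\Gamma_g$) of $\xi_0$. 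Writing $\tilde f=\varphi f$, $\tilde g=\varphi g$, testing microlocal $H^{\beta,r}$ membership of $fg$ at $(\vx_0,\xi_0)$ reduces, by Definition \ref{def:SobolevWF}, to bounding $\langle\xi\rangle^{\beta}\,(\hat{\tilde f}\ast\hat{\tilde g})(\xi)$ in $L^r$ with $\xi$ restricted to a narrow cone $\Gamma'\ni\xi_0$.

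I would then split the convolution integral in $\eta$ according to whether $\eta\in\Gamma_f$ or $\xi-\eta\in\Gamma_g$. On the pieces where the relevant factor lies in its cone of microlocal smoothness, $\hat{\tilde f}(\eta)$ or $\hat{\tilde g}(\xi-\eta)$ decays faster than any power, and a short geometric argument (if $\eta$ is small then $\xi-\eta$ remains in the cone of $\xi_0$) shows these pieces contribute to $H^{\beta,r}$ for every $\beta$; this is where the $\WF_{\vx_0}$ hypotheses are consumed, and in (\ref{item:1.7.3}) exactly one such piece survives, forcing the output regularity down to that of $g$. The essential piece is the ``both‑rough'' region where neither factor decays; there one has $\langle\eta\rangle,\langle\xi-\eta\rangle\gtrsim\langle\xi\rangle$ for $\Gamma'$ narrow, and I would estimate it by the part~(\ref{item:1.7.0}) mechanism, redistributing the weight $\langle\xi\rangle^{\beta}$ and, when a factor has negative order so that its Fourier transform grows, the offending power of $\langle\eta\rangle$, onto $\langle\xi-\eta\rangle$ via Peetre before applying Young. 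I expect this redistribution to be the main obstacle: keeping the $g$‑side in $L^q$ after absorbing the growth of a negative‑order $\hat{\tilde f}$ costs an extra $\min\{\alpha_f,\alpha_g,0\}$ of regularity, which is exactly the correction in the stated orders, and the cone bookkeeping (choosing $\Gamma'$ and the separation parameter small relative to the fixed apertures of $\Gamma_f,\Gamma_g$) must be handled carefully to justify the comparability estimates used throughout.
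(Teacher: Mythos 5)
Your proposal is correct and takes essentially the same route as the paper's proof: part (\ref{item:1.7.0}) via the Fourier-side convolution, Young's inequality under \eqref{eq:Young}, and Peetre-type redistribution of the weights (the paper derives Peetre's inequality by hand as \eqref{eq:xiest1} and \eqref{eq:xietaest}), and parts (\ref{item:1.7.2})--(\ref{item:1.7.3}) via localization, a conic splitting of frequency space around $\xi_0$ in which the wavefront hypotheses supply rapid decay, and comparability $\left|\xi\right| \lesssim \left|\eta\right|$, $\left|\xi\right| \lesssim \left|\xi - \eta\right|$ in the region where neither factor decays, which is exactly the paper's cone lemma (Lemma \ref{lem:conelem}). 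The only differences are presentational (a single consolidated kernel bound versus the paper's case-by-case norm estimates, and its explicit four-term partition \eqref{eq:partition} with cones $C_0 \subset C_1$, $C_2$), so the two arguments coincide in substance.
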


\begin{remark}
    Claims \eqref{item:1.7.2} and \eqref{item:1.7.3} can be refined by considering the microlocal regularity of $f$ and $g$ and using a similar method to the proof given here. Indeed, it is already known \cite[Theorem 8.2.10]{hormanderI} that if both $WF_{\vx_0}(f)$ and $WF_{\vx_0}(g)$ are nonempty and
    \begin{equation} \label{eq:0cond}
    0 \notin \{ \xi_f + \xi_g \ : \ \ \xi_f \in WF_0(f) \ \mbox{and} \ \xi_g \in WF_{\vx_0}(g) \},
    \end{equation}
    then
    \[
    WF_{\vx_0}(fg) \subset \overline{\{\xi_f + \xi_g \ : \ \xi_f \in WF_{\vx_0}(f) \ \mbox{and} \ \xi_g \in WF_{\vx_0}(g)\}} \setminus\{0\}.
    \]

\noindent which provides more detail in the location of $WF_{\vx_0}(fg)$ but requires \eqref{eq:0cond}. Note that \eqref{eq:0cond} is not a hypotheses of the theorem.
\end{remark}

\begin{proof}
    Throughout this proof we use the notation $C$ to denote a positive constant which may change from step-to-step. We assume \eqref{eq:Young} throughout.
    
    To prove claim \ref{item:1.7.0}, let us also assume, without loss of generality, that $\alpha_g \geq 0$ and $\alpha_g \geq \alpha_f$. Then, it is sufficient to show that
    \begin{equation}
        \label{eq:basic}
    \|fg\|_{H^{\alpha_f,r}(\mathbb{R}^n)} \leq C \|f\|_{H^{\alpha_f,p}(\mathbb{R}^n)} \|g\|_{H^{\alpha_g,q}(\mathbb{R}^n)}
    \end{equation}
    for all $f$, $g \in C_c^\infty(\mathbb{R}^n)$ and some $C>0$. For such $f$ and $g$, we have
    \begin{equation}\label{eq:est1}
    \begin{split}
    \|f g\|_{H^{\alpha_f,r}(\mathbb{R}^n)} & = (2 \pi)^n \|\hat{f}\ast \hat{g}(1+|\xi|^2)^{\alpha_f/2}\|_{L^r(\mathbb{R}^n)}\\
    & \leq (2\pi)^n  \Bigg (\int_{\mathbb{R}^n} \Bigg ( \int_{\mathbb{R}^n} (1+|\xi|^2)^{\frac{\alpha_f}{2}} (1+ |\xi-\eta|^2)^{-\alpha_f/2}\\ 
    & \hskip10em (1+ |\xi-\eta|^2)^{\alpha_f/2}|\hat{f}(\xi-\eta)| |\hat{g}(\eta)| \dd \eta \Bigg )^r \dd \xi \Bigg )^{1/r}.
    \end{split}
    \end{equation}
    Note that, by the triangle inequality,
    \[
    |\xi|^2\leq (|\xi-\eta| + |\eta|)^2 \leq 2 (|\xi-\eta|^2 + |\eta|^2)
    \]
    which implies
    \begin{equation}\label{eq:xiest1}
    1 + |\xi|^2 \leq 2(1+ |\xi-\eta|^2 + |\eta|^2) \leq 2 (1 + |\xi-\eta|^2) (1+ |\eta|^2).
    \end{equation}
    Therefore, if we assume $\alpha_f \geq 0$, we can establish \eqref{eq:basic} using \eqref{eq:est1}, $\alpha_g \geq \alpha_f$, and Young's inequality. On the other hand, using \eqref{eq:xiest1} with $\xi$ replaced by $\xi-\eta$ and $\eta$ replaced by $-\eta$ we have
    \begin{equation}\label{eq:xietaest}
    1+ |\xi-\eta|^2 \leq 2 (1+ |\xi|^2)(1+|\eta|^2).
    \end{equation}
    Applying this in \eqref{eq:est1} when $\alpha_f<0$, we obtain
    \[
    \begin{split}
    \|f g\|_{H^{\alpha_f,r}(\mathbb{R}^n)} & \leq C \Bigg (\int_{\mathbb{R}^n} \Bigg ( \int_{\mathbb{R}^n}(1+ |\xi-\eta|^2)^{\alpha_f/2}|\hat{f}(\xi-\eta)| (1+|\eta|^2)^{\alpha_g/2} |\hat{g}(\eta)| \dd \eta \Bigg )^r \dd \xi \Bigg )^{1/r}.
    \end{split}
    \]
    Note that we have used $\alpha_f + \alpha_g \geq 0$ which implies $\alpha_g \geq - \alpha_f \geq 0$. Application of Young's inequality then proves \eqref{eq:basic} which completes the proof of claim \eqref{item:1.7.0}.
    
    
    
    
    Suppose that $u \in C^\infty(\mathbb{R}^n \setminus\{0\})$
    is homogeneous of order $0$ and $\psi$, $\widetilde{\psi} \in C_c^\infty(\Omega)$
    are any functions such that $\psi(\vx_0)$, $\widetilde{\psi}(\vx_0)\neq 0$. Then, for any $\alpha$,
    \begin{equation} \label{eq:tHarest}
    \begin{split}
    \|u(D)\psi \widetilde{\psi} f g \|_{H^{\alpha,r}} & =(2\pi)^n  \| u(\widehat{\psi f}
    \ast \widehat{\widetilde{\psi} g}) ( 1+ |\xi|^2)^{\alpha/2} \|_{r}\\
    & = \left (\int_{\mathbb{R}^n} \left | \int_{\mathbb{R}^n} u(\xi) (1+|\xi|^2)^{\frac{\alpha}{2}} \widehat{\psi f}(\xi-\eta) \widehat{\widetilde{\psi} g}(\eta) \dd \eta \right |^r \dd \xi \right )^{1/r}.
    \end{split}
    \end{equation}
    We will use this formula as a basis for the proof of claims \eqref{item:1.7.2} and \eqref{item:1.7.3}.
    Suppose $\xi_0 \in \mathbb{R}^n \setminus\{0\}$ and define $\theta_0 = \xi_0/|\xi_0|$. For $0<\epsilon<\pi/2$, consider the sets
    \[
    C_1 = \{\zeta \in \mathbb{R}^n \setminus \{0\} \ : \ \zeta \cdot \theta_0 \geq |\zeta| \cos(\epsilon) \}
    \]
    Let
    \[
    C_0 = \{\zeta \in \mathbb{R}^n \setminus \{0\} \ : \ \zeta \cdot \theta_0 \geq |\zeta| \cos(\epsilon/2) \},
    \]
    \[
    C_2 = \{\zeta \in \mathbb{R}^n \setminus \{0\} \ : \ \zeta \cdot \theta_0 \leq |\zeta| \cos(\epsilon) \}
    \]
    and assume that $u$ in \eqref{eq:tHarest} is taken such that $\mathrm{supp}(u) \subset C_0$. Note that $C_1 \cup C_2 = \mathbb{R}^n \setminus \{0\}$ and the intersection has zero measure. We now prove a few inequalities that will be useful.
    \begin{lemma} \label{lem:conelem}
    If $\xi \in C_0$ and $\eta \in C_2$, then there is a constant $C>0$ such that
    \begin{equation} \label{eq:conelem1}
    |\xi| \leq C |\xi - \eta|.
    \end{equation}
    \end{lemma}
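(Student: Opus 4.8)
The plan is to reduce the inequality to a purely elementary statement about the angle between $\xi$ and $\eta$, using the fact that the cones $C_0$ and $C_2$ are angularly separated. First I would translate the hypotheses into angular bounds. The condition $\xi \in C_0$ reads $\xi \cdot \theta_0 \geq |\xi|\cos(\epsilon/2)$, so the angle between $\xi$ and $\theta_0$ is at most $\epsilon/2$; the condition $\eta \in C_2$ reads $\eta \cdot \theta_0 \leq |\eta|\cos(\epsilon)$, so the angle between $\eta$ and $\theta_0$ is at least $\epsilon$. By the triangle inequality for the geodesic (angular) metric on directions, the angle $\phi$ between $\xi$ and $\eta$ then satisfies $\phi \geq \epsilon - \epsilon/2 = \epsilon/2$.

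Next I would prove the following elementary claim: if two nonzero vectors make an angle $\phi \geq \delta$ for some fixed $\delta \in (0,\pi/2)$, then $|\xi| \leq (1/\sin\delta)\,|\xi-\eta|$. Writing $a = |\xi|$ and $b = |\eta|$, the law of cosines gives $|\xi-\eta|^2 = a^2 + b^2 - 2ab\cos\phi$, which I view as a quadratic in the scalar $b \geq 0$. When $\cos\phi \geq 0$, the unconstrained vertex $b = a\cos\phi$ is feasible and the minimum equals $a^2\sin^2\phi$; since $\phi \in [\delta,\pi/2]$ here and $\sin$ is increasing on $[0,\pi/2]$, this is at least $a^2\sin^2\delta$. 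When $\cos\phi \leq 0$, the vertex lies outside $b \geq 0$, so the minimum is attained at the boundary $b=0$ and equals $a^2 \geq a^2\sin^2\delta$. In either case $|\xi-\eta|^2 \geq a^2\sin^2\delta = |\xi|^2\sin^2\delta$.

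Combining the two steps with $\delta = \epsilon/2$ yields $|\xi| \leq |\xi-\eta|/\sin(\epsilon/2)$, so the lemma holds with $C = 1/\sin(\epsilon/2)$. There is no substantial obstacle here; the only points requiring care are the angular triangle inequality that produces the lower bound $\phi \geq \epsilon/2$, and the case split in the quadratic minimization, where for $\cos\phi < 0$ one must take the constrained minimum at $b=0$ rather than the interior vertex. I would present the geometric reduction first and then the quadratic estimate, since that ordering makes the constant $1/\sin(\epsilon/2)$ transparent and isolates the one subtlety (the sign of $\cos\phi$) cleanly.
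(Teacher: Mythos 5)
Your proof is correct, but it follows a genuinely different route from the paper's. The paper argues purely algebraically with dot products: it writes $\eta \cdot \theta_0 = (\eta-\xi)\cdot\theta_0 + \xi\cdot\theta_0$, combines the defining inequality of $C_2$ with the triangle inequality $|\eta| \leq |\eta-\xi| + |\xi|$, and then uses the $C_0$ condition to obtain $|\xi|\left(\cos(\epsilon/2) - \cos(\epsilon)\right) \leq (\xi-\eta)\cdot\theta_0 + |\eta-\xi|\cos(\epsilon) \leq |\xi-\eta|\left(1+\cos(\epsilon)\right)$, which yields the bound with $C = \frac{1+\cos\epsilon}{\cos(\epsilon/2)-\cos\epsilon}$ after dividing by the positive gap $\cos(\epsilon/2)-\cos(\epsilon)$. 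You instead pass to spherical geometry: cone membership becomes angular bounds, the geodesic triangle inequality on the sphere gives the angular separation $\phi \geq \epsilon/2$ between $\xi$ and $\eta$, and a law-of-cosines minimization in $|\eta|$ (with the correct case split on the sign of $\cos\phi$) gives $|\xi-\eta| \geq |\xi|\sin(\epsilon/2)$. Your route has two advantages: it produces the sharp constant $C = 1/\sin(\epsilon/2)$ (the paper's constant is far from optimal as $\epsilon \to 0$, growing like $\epsilon^{-2}$ rather than $\epsilon^{-1}$), and it makes the geometric content — angular separation of the cones — explicit. The trade-off is that you invoke the triangle inequality for the angular metric on directions, a standard but nontrivial fact that deserves at least a citation or a one-line justification (e.g., via the spherical law of cosines), whereas the paper's argument is entirely self-contained, using nothing beyond Cauchy--Schwarz and the Euclidean triangle inequality. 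Since the lemma is only needed qualitatively (any finite $C$ suffices for the Sobolev estimates that follow), both proofs serve the paper equally well.
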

    \begin{proof}
        Suppose that  $\xi \in C_0$ and $\eta \in C_2$. Then, by the formula defining $C_2$
        \[
        (\eta -\xi) \cdot \theta_0 + \xi \cdot \theta_0 \leq |\eta| \cos(\epsilon) \leq (|\eta - \xi| + |\xi|) \cos(\epsilon)
        \]
        which implies, since $\xi \in C_0$,
        \[
        |\xi| (\cos(\epsilon/2) - \cos(\epsilon)) \leq \xi \cdot \theta_0 - |\xi| \cos(\epsilon) \leq (\xi-\eta)\cdot \theta_0 + |\eta - \xi| \cos(\epsilon).
        \]
        Since $\cos(\epsilon/2)>\cos(\epsilon)>0$, this proves \eqref{eq:conelem1}.

    \end{proof}

    \noindent If we assume $\xi \in C_0$ and $\xi -\eta \in C_2$, note that we can replace $\eta$ by $\xi - \eta$ in this lemma and obtain
    \begin{equation}\label{eq:conelem2}
        |\xi| \leq C |\eta|.
    \end{equation}
    Now, let us split the integrand in \eqref{eq:tHarest} into a sum of four pieces using
    \begin{equation}\label{eq:partition}
    1 = \chi_{C_1}(\eta) \chi_{C_1}(\xi-\eta) + \chi_{C_2}(\eta) \chi_{C_1}(\xi-\eta) + \chi_{C_1}(\eta) \chi_{C_2}(\xi-\eta) + \chi_{C_2}(\eta)\chi_{C_2}(\xi-\eta)
    \end{equation}
    which for fixed $\xi$ holds outside a set of measure zero in $\eta$. Thus, we can bound \eqref{eq:tHarest} by a sum of four integrals corresponding to the four terms in this decomposition. We will bound each of these for the cases which occur in claims \eqref{item:1.7.2} and \eqref{item:1.7.3}.

    Let us now assume the hypotheses of claim \eqref{item:1.7.2}, and also assume as before, without loss of generality, that $\alpha_g \geq 0$ and $\alpha_g \geq \alpha_f$. Our objective is to prove \eqref{eq:tHarest} with $\alpha = \alpha_f + \alpha_g + \min\{\alpha_f,0\}$ is bounded. In this case, we can choose $\epsilon$ sufficiently small so that $C_1 \subset (\WF_{\vx_0}(f) \cup \WF_{\vx_0}(g))^c$ For the first term arising when we apply \eqref{eq:partition} to \eqref{eq:tHarest}, both $\eta$ and $\xi-\eta$ are in $(WF_{\vx_0}(f)\cup WF_{\vx_0}(g))^c$ on the support of the integrand, and so using \eqref{eq:xiest1} and Young's inequality for convolutions we prove this term is finite. 
    
    For the second term, we have $\xi \in C_0$, $\xi - \eta \in C_1$ and $\eta \in C_2$ and so we can apply \eqref{eq:conelem1} to estimate \eqref{eq:tHarest} by
    \begin{equation} \label{eq:2ndest}
    C \left (\int_{\mathbb{R}^n} \left ( \int_{\mathbb{R}^n}\chi_{C_2}(\eta) \chi_{C_1}(\xi-\eta) u(\xi) (1+|\xi-\eta|^2)^{\frac{\alpha_f + \alpha_g}{2}} |\widehat{\psi f}(\xi-\eta)| |\widehat{\widetilde{\psi} g}(\eta)| \dd \eta \right )^r \dd \xi \right )^{1/r}.
    \end{equation}
    Since $\xi - \eta \notin WF_{\vx_0}(f)$, this term is also finite by Young's inequality. Note we use $\alpha_g \geq 0$ here.
    
    For third term, $\xi \in C_0$, $\xi - \eta \in C_2$ and $\eta \in C_1$. Therefore, if $\alpha_f \geq 0$ we can estimate use \eqref{eq:conelem2} to estimate in a similar way as for the second term. If $\alpha_f <0$, we multiply and divide by $(1+|\xi-\eta|^2)^{\alpha_f/2}$ to bound by
    \[
    \begin{split}&
     C \Bigg (\int_{\mathbb{R}^n} \Bigg ( \int_{\mathbb{R}^n}\chi_{C_1}(\eta) \chi_{C_2}(\xi-\eta) u(\xi) (1+|\xi|^2)^{\frac{\alpha_f + \alpha_g}{2}} (1+|\xi-\eta|^2)^{\frac{-\alpha_f}{2}}\\
     &\hskip15em (1+|\xi-\eta|^2)^{\frac{\alpha_f}{2}}|\widehat{\psi f}(\xi-\eta)| |\widehat{\widetilde{\psi} g}(\eta)| \dd \eta \Bigg )^r \dd \xi \Bigg )^{1/r}.
     \end{split}
    \]
    We then apply \eqref{eq:xietaest}, \eqref{eq:conelem2}, the fact $\eta \notin \WF_{\vx_0}(g)$ on the support of the integrand, and Young's inequality to bound this term.

    For the fourth term $\xi \in C_0$, $\xi - \eta \in C_2$ and $\eta \in C_2$. If $\alpha_f \geq 0$ we can use both \eqref{eq:conelem1} and \eqref{eq:conelem2} to bound by
    \[
    \begin{split}
    C \left (\int_{\mathbb{R}^n} \left ( \int_{\mathbb{R}^n}\chi_{C_2}(\eta) \chi_{C_2}(\xi-\eta) u(\xi) (1+|\xi-\eta|^2)^{\frac{\alpha_f}{2}} |\widehat{\psi f}(\xi-\eta)|  (1+|\eta|^2)^{\frac{\alpha_g}{2}} |\widehat{\widetilde{\psi} g}(\eta)| \dd \eta \right )^r \dd \xi \right )^{1/r}
    \end{split}
    \]
    which is then bounded by Young's inequality. If $\alpha_f<0$, we first use \eqref{eq:conelem2} and multiply and divide by $(1+|\xi-\eta|^2)^{\alpha_f/2}$ to bound by
        \[
    \begin{split}
    & C \Bigg (\int_{\mathbb{R}^n} \Bigg ( \int_{\mathbb{R}^n}\chi_{C_2}(\eta) \chi_{C_2}(\xi-\eta) u(\xi) (1+|\xi|^2)^{\frac{\alpha_f}{2}} (1+|\xi-\eta|^2)^{-\frac{\alpha_f}{2}}\\
    & \hskip10em (1+|\xi-\eta|^2)^{\frac{\alpha_f}{2}}|\widehat{\psi f}(\xi-\eta)|  (1+|\eta|^2)^{\frac{\alpha_g+\alpha_f}{2}} |\widehat{\widetilde{\psi} g}(\eta)| \dd \eta \Bigg )^r \dd \xi \Bigg )^{1/r}.
    \end{split}
    \]
    Note that we have used $\alpha_f+\alpha_g \geq 0$ zero here. Finally, we apply \eqref{eq:xietaest} as well as Young's inequality to bound this term. This completes the proof of claim \eqref{item:1.7.2}.

    Now, let us assume the hypotheses of claim \eqref{item:1.7.3} and suppose $\epsilon$ is chosen small enough so that $C_1 \subset \WF_{\vx_0}(f)^c$. Our objective in this case is to show that \eqref{eq:tHarest} is finite when $\alpha = \alpha_g + \min\{\alpha_f,0\}$. As before, we use \eqref{eq:partition} to split the integral into four different terms and bound these individually. The first term can be bounded using \eqref{eq:xiest1} because $\xi-\eta \notin WF_{\vx_0}(f)$ and $\widetilde{\psi} g \in H^{\alpha_g,q}(\mathbb{R}^n)$. The second term can be bounded by \eqref{eq:2ndest} with $\alpha_f + \alpha_g$ replaced by $\alpha_g$ and this is then bounded by Young's inequality since $\xi-\eta \notin WF_{\vx_0}(f)$ and $\alpha_g \geq 0$.

    The third and fourth terms are bounded in the same way using the fact that $\xi \in C_1$ and $\xi-\eta \in C_2$. Because of this, we combine the two terms and eliminate the cut-off functions depending on $\eta$. First, consider the case when $\alpha_f \geq 0$. Since $\xi-\eta \in C_2$, we can use \ref{eq:conelem2} to bound by 
    \[
    C \left (\int_{\mathbb{R}^n} \left ( \int_{\mathbb{R}^n} \chi_{C_2}(\xi-\eta) u(\xi) |\widehat{\psi f}(\xi-\eta)| (1+|\eta|^2)^{\frac{\alpha_g}{2}}|\widehat{\widetilde{\psi} g}(\eta)| \dd \eta \right )^r \dd \xi \right )^{1/r}.
    \]
    This is bounded by Young's inequality. Now suppose $\alpha_f<0$ and multiply and divide by $(1+|\xi-\eta|^2)^{\alpha_f/2}$ to bound by
    \[
    \begin{split}
    & C \Bigg (\int_{\mathbb{R}^n} \Bigg ( \int_{\mathbb{R}^n} \chi_{C_2}(\xi-\eta) u(\xi) (1+|\xi|^2)^{\frac{\alpha_g + \alpha_f}{2}} (1+|\xi-\eta|^2)^{-\frac{\alpha_f}{2}}\\
    & \hskip15em (1+|\xi-\eta|^2)^{\frac{\alpha_f}{2}}|\widehat{\psi f}(\xi-\eta)||\widehat{\widetilde{\psi} g}(\eta)| \dd \eta \Bigg )^r \dd \xi \Bigg )^{1/r}.
    \end{split}
    \]
    Finally, we use \eqref{eq:xietaest}, \eqref{eq:conelem2}, and Young's inequality to bound this term. This completes the proof of claim \eqref{item:1.7.3}.
    
\end{proof}

We are now in a position to prove our main microlocal theorems.

\begin{theorem}\label{main_thm_1}
Let $f = u\chi_{\Omega}$, where $\Omega \subset B$ is a compact domain with smooth boundary, and $u \in C^\infty(\mathbb{R}^2)$. Let $\varphi \in H^{\lambda,\infty}(\mathbb{R}^2)$ for $\lambda > 1/2$, and let $g = e^{ -\widetilde{\mathcal{V}_L}f(\cdot,\phi) }$ where $\phi \in [-\pi, \pi]$ is fixed. If $(\vx_0,\xi) \notin WF(f)$ then $fg \in H^{\beta,2}$ microlocally near $(\vx_0,\xi)$ for any $\beta < \lambda$. If we also assume $u>0$, then
\begin{equation}
\WF(fg)\backslash \WF^{1/2}(fg) = \WF(f).
\end{equation}
\end{theorem}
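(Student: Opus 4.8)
The plan is to prove the two assertions in turn, using Theorem~\ref{mainprod} for the first and an invertibility argument for $g$ for the second. For the first assertion, I would apply part~\eqref{item:1.7.3} of Theorem~\ref{mainprod} directly. By Theorem~\ref{char_sobo} with $n=2$, $f=u\chi_\Omega\in H^{\alpha_f,1}(\mathbb{R}^2)$ for every $\alpha_f<-1/2$, and by Theorem~\ref{thm_sep}, $g\in H^{\alpha_g,2}(\mathbb{R}^2)$ locally near $\vx_0$ for every $\alpha_g<\lambda+1/2$. Taking $p=1$, $q=2$, $r=2$ so that \eqref{eq:Young} holds, and choosing $\alpha_f,\alpha_g$ with $\alpha_f+\alpha_g\geq 0$ but $\alpha_f+\alpha_g$ arbitrarily close to $\lambda$ from below, the hypotheses of claim~\eqref{item:1.7.3} hold since $\xi\notin \WF_{\vx_0}(f)$ and $\alpha_g\geq 0$. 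The conclusion $fg\in H^{\alpha_g+\min\{0,\alpha_f\},2}=H^{\alpha_f+\alpha_g,2}$ microlocally near $(\vx_0,\xi)$ then yields $fg\in H^{\beta,2}$ microlocally near $(\vx_0,\xi)$ for every $\beta<\lambda$ (the case $\beta<0$ following by inclusion of Sobolev spaces).

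For the identity $\WF(fg)\setminus\WF^{1/2}(fg)=\WF(f)$ I would establish the two inclusions. The inclusion $\subseteq$ is the contrapositive of the first assertion: if $(\vx_0,\xi_0)\notin\WF(f)$ then, because $\lambda>1/2$, the first part gives $fg\in H^{1/2,2}$ (in fact $H^{\beta,2}$ for some $\beta>1/2$) microlocally near $(\vx_0,\xi_0)$, so $(\vx_0,\xi_0)$ is either absent from $\WF(fg)$ or lies in $\WF^{1/2}(fg)$; in either case it is not in $\WF(fg)\setminus\WF^{1/2}(fg)$.

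For the reverse inclusion I would use that $g$ has a well-behaved reciprocal. Since $\widetilde{\mathcal{V}_L}f$ is a bounded, nonnegative function (an integral of the bounded $f=u\chi_\Omega$ over segments of length $\nu$, smoothed by $\varphi\in L^1$), the factor $g=e^{-\widetilde{\mathcal{V}_L}f}$ is bounded away from $0$, and both $g$ and $1/g=e^{\widetilde{\mathcal{V}_L}f}$ lie locally near $\vx_0$ in $H^{s,2}(\mathbb{R}^2)$ for every $s<\lambda+1/2$; this follows from Theorem~\ref{thm_V_smth} combined with Theorem~\ref{sobo} applied to a smooth function agreeing with $t\mapsto e^{\pm t}$ on the bounded range of $\widetilde{\mathcal{V}_L}f$. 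As $\lambda>1/2$, I may take $s>1=n/2$, so $1/g$ is a multiplier of class $H^{s}$ with $s>n/2$. Now let $(\vx_0,\xi_0)\in\WF(f)$. Since $u>0$, $f=u\chi_\Omega$ has a genuine jump across the smooth boundary $\partial\Omega$ in the conormal direction $\xi_0$, hence is of exact microlocal Sobolev order $1/2$ there; that is, $f$ is not microlocally $H^{1/2,2}$ near $(\vx_0,\xi_0)$ (equivalently $\WF^{1/2}(f)=\emptyset$). If $fg$ were microlocally $H^{1/2,2}$ near $(\vx_0,\xi_0)$, multiplying by $1/g$ would give $f=(1/g)(fg)\in H^{1/2,2}$ microlocally near $(\vx_0,\xi_0)$, a contradiction. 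Therefore $fg$ is not microlocally $H^{1/2,2}$ there, which forces $(\vx_0,\xi_0)\in\WF(fg)$ and $(\vx_0,\xi_0)\notin\WF^{1/2}(fg)$, i.e.\ $(\vx_0,\xi_0)\in\WF(fg)\setminus\WF^{1/2}(fg)$.

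The main obstacle is the microlocal multiplication step in the reverse inclusion: I must know that multiplying a distribution which is microlocally $H^{1/2,2}$ at $(\vx_0,\xi_0)$ by a function in $H^{s}$ with $s>n/2$ does not destroy microlocal $H^{1/2,2}$ regularity there. This is not covered verbatim by claims~\eqref{item:1.7.0}--\eqref{item:1.7.3}, since claim~\eqref{item:1.7.0} is global, while claims~\eqref{item:1.7.2} and \eqref{item:1.7.3} require one factor to be microlocally smooth in the direction $\xi_0$, whereas here both $fg$ and $1/g$ may be singular at $(\vx_0,\xi_0)$. I would therefore establish it as the microlocal refinement of Theorem~\ref{mainprod} anticipated in the Remark, repeating the cone decomposition of that proof but tracking the microlocal Sobolev order of $1/g$ in place of a smoothness hypothesis; the estimate closes precisely because $s>n/2$ makes the $1/g$-factor integrable against the conic cutoff. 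The remaining ingredients—boundedness of $\widetilde{\mathcal{V}_L}f$, and the exact order-$1/2$ character of characteristic-function singularities at conormal directions—are routine.
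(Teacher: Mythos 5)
Your treatment of the first assertion and of the inclusion $\WF(fg)\setminus\WF^{1/2}(fg)\subseteq\WF(f)$ is correct and is essentially the paper's own argument: the same choice $p=1$, $q=r=2$ in claim \eqref{item:1.7.3} of Theorem \ref{mainprod}, with $f\in H^{\alpha_f,1}$ for $\alpha_f<-1/2$ and $g$ locally in $H^{\alpha_g,2}$ for $\alpha_g<\lambda+1/2$, followed by the contrapositive. Where you genuinely diverge is the reverse inclusion. The paper never divides by $g$: it bounds the Gagliardo seminorm $|\psi fg|_{H^{\alpha'}}$ from below by $C^2|\chi_\Omega|^2_{H^{\alpha'}(B')}$, using only that $fg\geq C>0$ on $\Omega$ near $\vx_0$ (which holds since $u>0$ and $g=e^{-\widetilde{\mathcal{V}_L}f}$ is bounded below), so that $\psi fg\notin H^{1/2,2}$; it then microlocalizes via the cone-splitting identity \eqref{equ_loc}, the integral outside the cone being finite by the regularity already proven in non-conormal directions. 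In other words, the "routine" jump fact you invoke for $f$ is what the paper proves directly for $fg$, and that makes any multiplication or division lemma unnecessary; your route relocates that work to $f$ and then needs an extra lemma on top of it.

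That extra lemma is where your proposal has a genuine gap. The statement you need (multiplication by $1/g\in H^{s,2}$ locally, $s>n/2$, preserves microlocal $H^{1/2,2}$ regularity of $fg$) is true under the theorem's hypotheses, but not for the reason you give, and not by a straightforward repetition of the paper's cone decomposition. If you transplant that scheme, the problematic region is where the $1/g$-frequency dominates the $fg$-frequency (the high-low and high-high interactions): there the output weight $(1+|\xi|^2)^{1/4}$ must be absorbed by the decay of $\widehat{\psi\,(1/g)}$, and Young's inequality in the form $L^1\ast L^2\subset L^2$ then demands $(1+|\cdot|^2)^{1/4}\,\widehat{\psi\,(1/g)}\in L^1$, i.e.\ $s>1/2+n/2=3/2$, which means $\lambda>1$ --- not $s>n/2$ as you assert, and short of the claimed range $\lambda>1/2$. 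To close the estimate for $1/2<\lambda\leq 1$ one must also spend the global order $t'<1/2$ of $fg$ and sum over dyadic frequency shells (a paraproduct-type argument whose binding condition is $s+t'>3/2$, i.e.\ exactly $\lambda>1/2$); alternatively, restructure the argument: combine your hypothesis with the first part of the theorem to conclude $fg$ is microlocally $H^{1/2,2}$ in \emph{every} direction at $\vx_0$, hence locally $H^{1/2,2}$ near $\vx_0$, and then cite the classical global Sobolev multiplication theorem $H^{s}\cdot H^{1/2}\subset H^{1/2}$ for $s>n/2$ --- a known result, but one that is neither in the paper nor derivable from Theorem \ref{mainprod} (its claim \eqref{item:1.7.0} with $p=q=2$ forces $r=1$ and only yields an $L^1$-based conclusion). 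As written, therefore, your proof of $\WF(f)\subseteq\WF(fg)\setminus\WF^{1/2}(fg)$ fails precisely in the regime $1/2<\lambda\leq 1$ that the theorem is required to cover.
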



\begin{proof}


Let $\vx_0 \in \mathbb{R}^2$ and $\psi \in C^\infty_c(\mathbb{R}^2)$ be a smooth cutoff centered on $\vx_0$. Then $\psi g$ is in $H^{\alpha' + \lambda,2}(\mathbb{R}^2)$ 
where $\alpha' < 1/2$ by Theorem \ref{thm_sep}, and $f\in H^{\alpha',2}(\mathbb{R}^2)$ which implies by Lemma \ref{lem:Hsa_cont} with $p = 1$ and $\widetilde{p} = 2$ that $f \in H^{\alpha'-1,1}(\mathbb{R}^2)$. Now let us set $q = r = 2$ and $p = 1$ and apply Theorem \ref{mainprod}. If $0 \neq\xi_0 \notin WF_{\vx_0}(f)$, then claim \eqref{item:1.7.3} of Theorem \ref{mainprod} yields $fg \in H^{\beta,2}$ microlocally near $(\vx_0,\xi_0)$, for any $\beta < \lambda$ which proves first claim of the theorem. Since $\lambda > 1/2$ this implies that $\{(\vx_0,\xi_0)\} \in (\WF(fg)\setminus \WF^{1/2}(fg))^c$. Combining this with the previous paragraph, we can conclude $WF(f) \supset \WF(fg)\setminus \WF^{1/2}(fg)$.  



Now let us assume that $u>0$ and focus on the case $\xi_0 \in WF_{\vx_0}(f)$. Let $\psi = 1$ on a ball, $B'$, centered on $\vx_0$. First, by Definition \ref{sobo_spaces}, for $0<\alpha'<1$, $\|\psi fg\|_{H^{\alpha',2}(\mathbb{R}^2)}$ converges if and only if the seminorm $|\psi fg|_{H^{\alpha'}(\mathbb{R}^2)}$ converges. We have
\begin{equation}
\begin{split}
|\psi fg|^2_{H^{\alpha}(\mathbb{R}^2)} &= \int_\Omega\int_{\Omega^c} \frac{|(\psi fg)(\vy)|^2}{|\vx-\vy|^{2 + 2\alpha'}}\mathrm{d}\vx\mathrm{d}\vy  + \int_{\Omega^c}\int_{\Omega} \frac{|(\psi fg)(\vx)|^2}{|\vx-\vy|^{2 + 2\alpha'}}\mathrm{d}\vx\mathrm{d}\vy + |\psi fg|^2_{H^{\alpha'}(\Omega)} \\
&\geq \int_{\Omega\cap B'}\int_{\Omega^c\cap B'} \frac{|(fg)(\vy)|^2}{|\vx-\vy|^{2 + 2\alpha'}}\mathrm{d}\vx\mathrm{d}\vy + \int_{\Omega^c\cap B'}\int_{\Omega\cap B'} \frac{|(fg)(\vx)|^2}{|\vx-\vy|^{2 + 2\alpha'}}\mathrm{d}\vx\mathrm{d}\vy\\
&\geq C^2 \paren{ \int_{\Omega\cap B'}\int_{\Omega^c\cap B'} \frac{1}{|\vx-\vy|^{2 + 2\alpha'}}\mathrm{d}\vx\mathrm{d}\vy + \int_{\Omega^c\cap B'}\int_{\Omega\cap B'} \frac{1}{|\vx-\vy|^{2 + 2\alpha'}}\mathrm{d}\vx\mathrm{d}\vy }\\
&= C^2 |\chi_{\Omega}|^2_{H^{\alpha'}(B')},
\end{split}
\end{equation}
where $fg > C$ on $\Omega$. Thus, $\|\psi fg\|_{H^{1/2,2}}$ diverges as $\chi_\Omega$ is not in $H^{1/2,2}(B')$. 

Let $V = \{\xi : |\xi\cdot \xi_0| > |\xi||\xi_0|\cos\epsilon\}$ be an open cone interior with central axis $\xi_0 \in WF_{\vx_0}(f)$. Note that, since $\partial \Omega$ is smooth, $\WF_{\vx_0}(f) = \mathrm{span}(\{\xi_0\})\setminus \{0\}$ and so $(\mathbb{R}^2 \setminus V) \cap \WF_{\vx_0}(f) = \emptyset$ Then 
\begin{equation}\label{equ_loc}
\int_V |\mathcal{F}(\psi fg)(\xi)|^2(1+|\xi|^2)^{1/2} \mathrm{d}\xi= \|\psi fg\|^2_{H^{1/2,2}} - \int_{\mathbb{R}^2\backslash V} |\mathcal{F}(\psi fg)(\xi)|^2(1+|\xi|^2)^{1/2}\mathrm{d}\xi.
\end{equation}
The third term in \eqref{equ_loc} converges for any $\epsilon > 0$ since $fg$ is in $H^{1/2,2}$ near $(\vx_0, \xi)$ for any $\xi \in \mathbb{R}^2\backslash V$ as shown earlier in this proof. Thus, the term on the left side of \eqref{equ_loc} diverges no matter how small we choose $\epsilon$ since $\|\psi fg\|_{H^{1/2,2}}$ diverges. Therefore, putting this together, when $\xi_0 \in WF_{\vx_0}(f)$, $fg$ is not in $H^{1/2,2}$ near $(\vx_0,\xi_0)$. This implies $WF(f) \subset \WF(fg)\setminus \WF^{1/2}(fg)$ which completes the proof.
\end{proof}


\begin{remark}
The above theorem means that $fg$ has two distinct sets of singularities, those which are more regular (i.e., $H^{1/2,2}$ microlocally), and those which are more irregular (i.e., not in $H^{1/2,2}$ microlocally). The latter correspond to the singularities of $f$, which are one of the targets for reconstruction. Noting that $h(\cdot,\theta) = fg = f e^{ -\widetilde{\mathcal{V}_L}f(\cdot,\theta) } = f e^{ -\widetilde{\mathcal{V}_{a,b}}f(\cdot,\theta) }$, $\mathcal{R}f(\cdot,\theta) = R_w(h(\cdot, \theta))$ is simply the 1-D Radon projection projection (in direction $\theta$) of the 2-D slice of $h$ on the $x_3 = \theta$ plane. When the ray $L(s,\theta)$ is tangent to $\partial \Omega$, we would expect to see a stronger singularity in the non-linear data $\mathcal{R}f$, and conversely when $L(s,\theta)$ is not tangent to $\partial \Omega$. {To help visualize this idea, see figure \ref{Fproj}. Here we have shown an example $f$, a slice of $h' = wfg$, where $w$ is the physical modeling weight, and the corresponding Radon projection of $h'$. The physical parameters used here (e.g., the V-line opening angle $2\psi$) are the same as in section \ref{simulations}, so refer to that section for more details. We have also highlighted the projection line and three point-direction pairs $(\vx_i,\xi_i)$, $i = 1,2,3$, in figure \ref{Fp_b}. Here $(\vx_1,\xi_1) \in \WF(f)$, where a jump singularity occurs. This edge is not in $H^{1/2,2}$ locally and this is highlighted in the projection. At $(\vx_2,\xi_2)$, $h'$ is smooth, and hence we do not see an edge in the data. There could also be singularities of $h'$ disjoint from $\WF(f)$ at, e.g., $(\vx_3,\xi_3)$. We show in Theorem \ref{main_thm_1} that these edges are in $H^{\beta,2}$ locally for any $\beta < \lambda$, where $\varphi \in H^{\lambda,\infty}(\mathbb{R}^2)$. Thus, if $\lambda$ is large enough, the singularities of $f$ are uniquely encoded in $h$, and this is reflected in the singularities of the projection. We aim to formalize this idea in the following analysis.

It is important to note also that, due to $w$ and attenuation, the singularities on the upper-right side of the $h'$ slice are of less magnitude than those towards the bottom-left of the phantom, although the Sobolev order is unchanged. This can have practical implications on edge recovery which we will explore later in section \ref{simulations}.}
\begin{figure}
\centering
\begin{subfigure}{0.3\textwidth}
\includegraphics[width=0.9\linewidth, height=3.2cm, keepaspectratio]{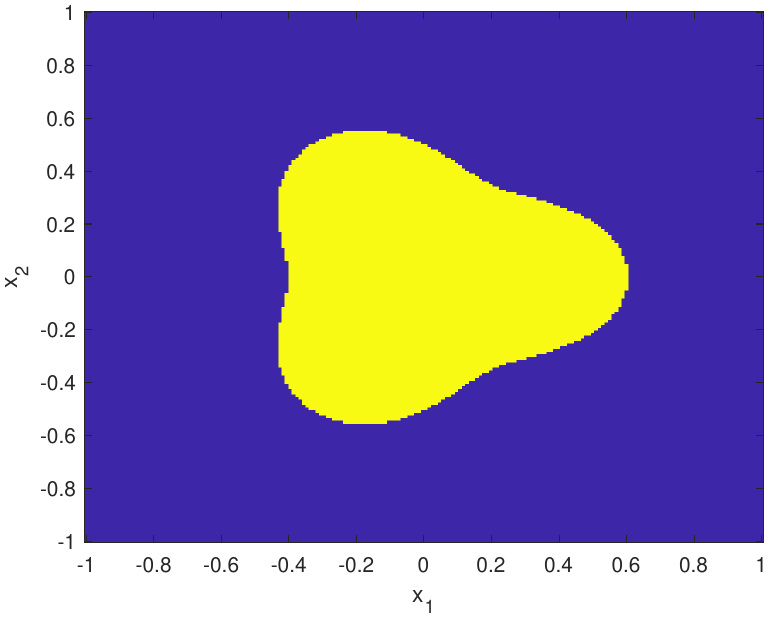}
\subcaption{$f$}
\end{subfigure}
\begin{subfigure}{0.3\textwidth}
\includegraphics[width=0.9\linewidth, height=3.2cm, keepaspectratio]{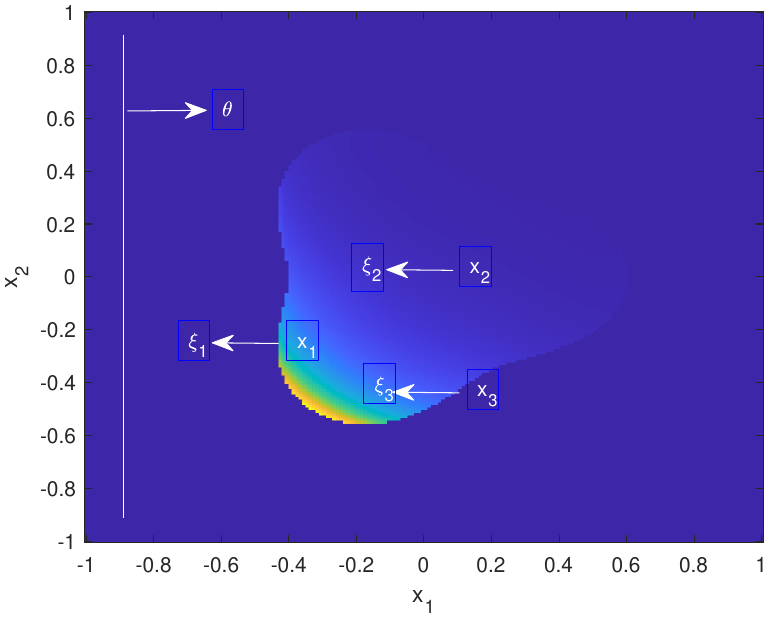}
\subcaption{$h'(\cdot,\pi/2)$} \label{Fp_b}
\end{subfigure}
\begin{subfigure}{0.3\textwidth}
\includegraphics[width=0.9\linewidth, height=3.2cm, keepaspectratio]{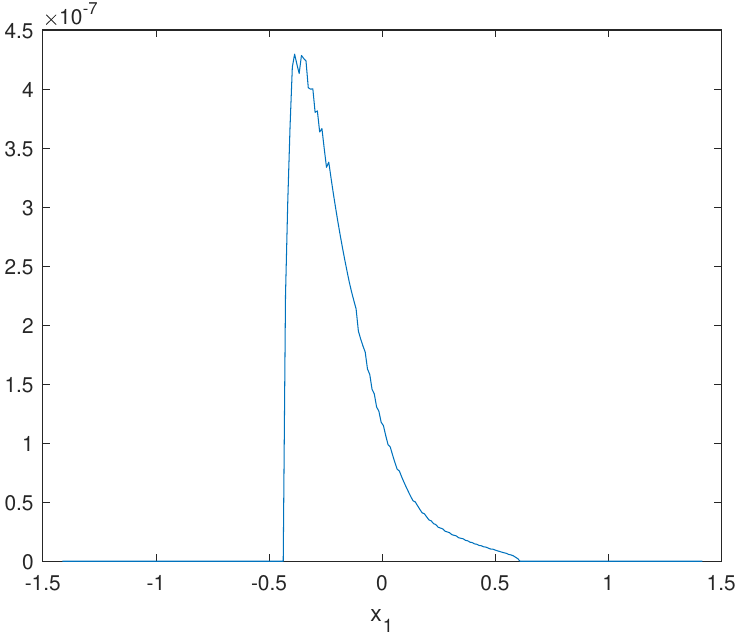}
\subcaption{$R_w(h'(\cdot,\pi/2))(\cdot,\pi/2)$}
\end{subfigure}
\caption{(a) - Example density. (b) - slice of $h' = wfg$ when $\phi = \pi/2$, where $w > 0$ is a smooth weight included for physical modeling as discussed in section \ref{model}. (c) - Radon projection of $h'$ at $\theta = \pi/2$. By \eqref{R_to_R}, this is the same as the projection of the non-linear data at $\theta = \pi/2$.}
\label{Fproj}
\end{figure}
\end{remark}
  
\begin{theorem}
\label{main_1}
Let $f = u\chi_\Omega$ be as in Theorem \ref{main_thm_1}, and let $(s_0,\theta_0)$ be such that $L = L(s_0,\theta_0)$ is not tangent to $\partial \Omega$. Let $\varphi \in H^{\lambda,\infty}(\mathbb{R}^2)$ for $\lambda > 1/2$. Then $\mathcal{R}f(\cdot,\theta_0)$ is in $H^{\beta,2}$ near $s_0$ for any $\beta < \lambda$.
\end{theorem}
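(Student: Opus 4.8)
The plan is to reduce the claim to a one-dimensional trace (Fourier slice) estimate and then feed in the microlocal regularity supplied by Theorem~\ref{main_thm_1}. Recall that the model reformulation gives $\mathcal{R}f(\cdot,\theta_0) = R_w(h(\cdot,\theta_0))$ with $h = fg$. First I would rotate coordinates so that $\Theta_0 = (1,0)$; then $L(s,\theta_0) = \{x_1 = s\}$ and, up to a nonzero multiplicative constant, $\mathcal{R}f(s,\theta_0) = \int_{\mathbb{R}} (w\,fg)(s,x_2)\,\dd x_2$. Writing $\tilde{h} = w\,fg$, which is compactly supported because $f$ is, the Fourier slice theorem gives $\widehat{\mathcal{R}f(\cdot,\theta_0)}(\sigma) = \widehat{\tilde{h}}(\sigma,0)$, so the Sobolev regularity of $\mathcal{R}f(\cdot,\theta_0)$ near $s_0$ is governed by the size of the trace of $\widehat{\tilde{h}}$ on the $\xi_1$-axis, i.e.\ on the ray in the conormal direction $\Theta_0$.

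The key geometric observation is that non-tangency forces $\Theta_0 \notin \WF_{\vx}(f)$ for every $\vx \in L(s_0,\theta_0)$. Indeed, since $\partial\Omega$ is smooth, $\WF_{\vx}(f)$ is empty unless $\vx \in \partial\Omega$, and for $\vx \in L \cap \partial\Omega$ the normal to $\partial\Omega$ at $\vx$ is parallel to $\Theta_0$ precisely when $L$ is tangent to $\partial\Omega$ at $\vx$, which is excluded by hypothesis. Hence $(\vx,\Theta_0)\notin\WF(f)$ along the whole compact segment $L\cap\supp(\tilde{h})$, and Theorem~\ref{main_thm_1} gives that $fg$, and therefore $\tilde{h}=w\,fg$ (multiplication by the smooth weight $w$ is a pseudodifferential operator of order $0$ and preserves microlocal Sobolev regularity), is in $H^{\gamma,2}$ microlocally near $(\vx,\Theta_0)$ for each such $\vx$ and every $\gamma<\lambda$. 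A standard compactness and partition-of-unity argument then upgrades this to a single thin spatial cutoff $\chi(x_1)$ supported near $s_0$ together with a conic cutoff $u$ around $\pm\Theta_0$ for which $\int |u(\xi)|^2\,|\widehat{\chi\tilde{h}}(\xi)|^2 (1+|\xi|^2)^{\gamma}\,\dd\xi < \infty$.

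The main obstacle is that the fixed-angle projection is not a canonical graph: an entire fiber of $\WF(\tilde{h})$ over the line collapses to the single point $s_0$, so I cannot simply invoke an FIO Sobolev-mapping theorem, and naively restricting the conic $L^2$ bound to the axis would cost half a derivative. I would circumvent this using the compact support of $\tilde{h}_\chi := \chi(x_1)\tilde{h}$: its Fourier transform $G = \widehat{\tilde{h}_\chi}$ is smooth, and $\partial_{\xi_2} G = \widehat{(-ix_2)\tilde{h}_\chi}$ inherits the same microlocal regularity, since $x_2\,\tilde{h}_\chi$ is again compactly supported with the same wavefront set. Thus the conic estimate above holds for both $G$ and $\partial_{\xi_2}G$. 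The one-dimensional Sobolev embedding $H^1(-\delta,\delta)\hookrightarrow C^0$ in the $\xi_2$ variable then yields, for a fixed small $\delta$ and a constant $C$ uniform in $\sigma$, the pointwise bound $|G(\sigma,0)|^2 \le C\int_{-\delta}^{\delta}\paren{|G(\sigma,\xi_2)|^2 + |\partial_{\xi_2}G(\sigma,\xi_2)|^2}\,\dd\xi_2$.

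To finish, I would fix $\beta<\lambda$, choose $\gamma$ with $\beta\le\gamma<\lambda$, and integrate this pointwise bound against $(1+\sigma^2)^{\beta}$. For $|\sigma|$ large the strip $\{|\xi_2|\le\delta\}$ lies inside the cone $\{u\neq 0\}$, where $(1+\sigma^2)^{\beta}\le(1+|\xi|^2)^{\gamma}$, so the resulting double integral is dominated by the conic bounds for $G$ and $\partial_{\xi_2}G$; the contribution of bounded $\sigma$ is finite because $G$ is smooth. This gives $\int_{\mathbb{R}} |\widehat{\chi\,\mathcal{R}f(\cdot,\theta_0)}(\sigma)|^2(1+\sigma^2)^{\beta}\,\dd\sigma < \infty$, that is, $\mathcal{R}f(\cdot,\theta_0)\in H^{\beta,2}$ near $s_0$ for every $\beta<\lambda$, as claimed.
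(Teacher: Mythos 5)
Your proposal is correct, and it reaches the conclusion by a genuinely different route than the paper. The paper's proof is a two-step composition of cited results: it regards $\mathcal{R}f = R_w(h(\cdot,\theta))$ as a function of both sinogram variables, invokes Quinto's microlocal Sobolev continuity theorem for the Radon transform (\cite[Theorem 3.1]{Q1993sing}) to get $R_w h \in H^{\beta+1/2,2}$ microlocally near $(s_0,\theta_0)$, and then applies the trace theorem (Theorem \ref{trace_thm}) to restrict to $\theta = \theta_0$, losing the half derivative just gained. You instead never leave the fixed angle: you prove the required ``fixed-direction projection preserves microlocal order'' estimate by hand, via the Fourier slice theorem plus the $x_2$-multiplier trick ($\partial_{\xi_2}\widehat{\tilde{h}_\chi} = \widehat{(-ix_2)\tilde{h}_\chi}$ obeys the same conic bound) and the one-dimensional embedding $H^1(-\delta,\delta)\hookrightarrow C^0$ in the $\xi_2$ variable. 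This is exactly the right workaround for the obstacle you correctly identify --- a naive Fourier-trace of the conic $L^2$ bound would lose $1/2$, whereas your argument, like the paper's, has net-zero derivative count. What each buys: your route is self-contained and elementary (no FIO Sobolev mapping theorem, no trace theorem --- both citations are replaced by the slice/multiplier argument), while the paper's is shorter given the references and produces the two-variable microlocal statement on the full sinogram, which is closer in form to what is reused in the later corollaries. Three small points in your write-up deserve one line each in a final version: the conic cutoff $u$ should be taken equal to $1$ (or bounded below) on the smaller cone containing the strip $\{|\xi_2|\le\delta,\ |\sigma|\ge M\}$, not merely nonzero there; the weight comparison $(1+\sigma^2)^{\beta}\le(1+|\xi|^2)^{\gamma}$ wants $0\le\beta\le\gamma$, which is harmless since the claim for larger $\beta$ implies it for smaller; and the partition-of-unity step tacitly uses that non-tangency is an open condition (equivalently, that the set where $f$ fails microlocal regularity in directions $\pm\Theta_0$ is closed and disjoint from the compact segment), so the thin slab around $L\cap\supp\tilde{h}$ stays clear of $\WF(f)$ in those directions --- all routine, but worth stating.
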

\begin{proof}
Fixing $\phi = \theta_0$, we have $\mathcal{R}f(s, \theta_0)= R_w(h(\cdot, \theta_0))(s,\theta_0)$. Let $h = h(\cdot,\theta_0)$ for convenience of notation. By Theorem \ref{main_thm_1}, since $L$ is not tangent to $\partial \Omega$, $h$ is in $H^{\beta,2}$ microlocally near $(\vx, \theta_0)$ for any $\vx \in L$. Thus, $R_wh \in H^{\beta + 1/2,2}$ near $(s_0,\theta_0)$ \cite[Theorem 3.1]{Q1993sing}. Let $\psi(s,\theta) = \psi_1(s)\psi_2(\theta)$ be a smooth cutoff centered on $(s_0,\theta_0)$ where $\psi_1(s_0),\psi_2(\theta_0) = 1$ and $\psi R_wh \in H^{\beta + 1/2,2}(\mathbb{R}^2)$. Applying Theorem \ref{trace_thm}, we have $\psi_1 R_wh(\cdot,\theta_0) = \psi_1 \mathcal{R}f(\cdot, \theta_0) \in H^{\beta,2}(\mathbb{R}^2)$ for any $\beta < \lambda$.
\end{proof}

\begin{theorem}
\label{main_2}
Let $f = u\chi_\Omega$ be as in Theorem \ref{main_thm_1} with $u>0$ and let $(s_0,\theta_0)$ be such that $L = L(s_0,\theta_0)$ is tangent to $\partial \Omega$ and the set of tangent points is either a line segment or a single point at which the curvature of $\partial \Omega$ is not zero. Also suppose that $\varphi \in H^{\lambda,\infty}(\mathbb{R}^2)$ with $\lambda > 1$. Then, $\mathcal{R}f(\cdot,\theta_0)$ is not in $H^{1,2}$ near $s_0$.
\end{theorem}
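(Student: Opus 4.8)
The plan is to reduce the statement to the one–dimensional behaviour, in $s$, of the chord integral $\mathcal{R}f(s,\theta_0)=\int_{L(s,\theta_0)} w\,v\,\chi_\Omega\,\mathrm{d}l$, where $v=u\,g(\cdot,\theta_0)$ and $g=e^{-\widetilde{\mathcal{V}_L}f(\cdot,\theta_0)}$, and to exhibit at $s=s_0$ a singularity that fails to lie in $H^1$. First I would record that $v$ is continuous and bounded below by a positive constant near the tangent set: by Theorem \ref{thm_V_smth} (applied with $f\in H^{\alpha,2}$, $\alpha<1/2$) we have $\widetilde{\mathcal{V}_L}f(\cdot,\theta_0)\in H^{\beta,2}(\mathbb{R}^2)$ for every $\beta<\lambda+\tfrac12$, and since $\lambda>1$ we may take $\beta>1$, so Sobolev embedding gives $\widetilde{\mathcal{V}_L}f(\cdot,\theta_0)\in C^0$ and hence $g\in C^0$ with $g>0$; combined with $u\in C^\infty$, $u>0$, this makes $wv$ a positive continuous weight near the tangency locus.

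Next I would localise. For $s$ near $s_0$, the only place where the family $L(s,\theta_0)$ is tangent to $\partial\Omega$ (equivalently, where $\partial\Omega$ carries a conormal parallel to $\Theta_0$ meeting the line) is the prescribed tangent set; everywhere else $L(s,\theta_0)$ meets $\partial\Omega$ transversally. Splitting $\mathcal{R}f(\cdot,\theta_0)$ with a spatial cutoff into a piece supported near the tangent set and a remainder, the remainder is governed by the non–tangential analysis underlying Theorem \ref{main_1}, which places it in $H^{\beta,2}$ for $\beta<\lambda$, hence in $H^{1}$ since $\lambda>1$. It therefore suffices to show the tangent–set piece is not in $H^1$ near $s_0$. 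Here the hypothesis that the tangent set is a single segment or a single point is essential: it guarantees a single tangency locus, so that the positive integrand $wv$ produces no cancellation between distinct contributions sharing the same $s$–value.

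I would then compute the leading singularity case by case. If $\partial\Omega\cap L$ is a segment, the chord $L(s,\theta_0)\cap\Omega$ sweeps into (or out of) $\Omega$ as $s$ crosses $s_0$, so $\int_{L(s,\theta_0)} wv\,\chi_\Omega\,\mathrm{d}l$ jumps from $0$ to the positive value $\int_{\mathrm{segment}} wv\,\mathrm{d}l$; a genuine jump is not in $H^{1/2}$ (the $n=1$ case of Theorem \ref{char_sobo}), and a fortiori not in $H^1$. If instead the tangent set is a single point $\vx_0$ at which $\partial\Omega$ has curvature $\kappa\neq0$, then in coordinates with $\Theta_0$ vertical and $\partial\Omega$ a nondegenerate parabola, the chord length through $\Omega$ is $2\sqrt{(2/\kappa)(s-s_0)_{\pm}}$, whence $\mathcal{R}f(s,\theta_0)=a(s)\,(s-s_0)_{\pm}^{1/2}+r(s)$ with $r$ more regular and $a$ continuous, $a(s_0)=2\,w(\vx_0)v(\vx_0)\sqrt{2/\kappa}>0$; since $t_+^{1/2}\notin H^1$ (its derivative behaves like $t_+^{-1/2}\notin L^2_{\mathrm{loc}}$) and multiplication by a positive continuous factor preserves this, $\mathcal{R}f(\cdot,\theta_0)\notin H^1$ near $s_0$.

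The main obstacle I anticipate is precisely the passage to the fixed projection angle $\theta_0$: unlike the positive direction (Theorem \ref{main_1}), I cannot invoke the trace theorem, because restricting the two–dimensional datum to $\theta=\theta_0$ could in principle improve regularity by cancelling contributions from distinct points of $\partial\Omega$ lying on $L$ with conormal parallel to $\Theta_0$. Ruling this out is exactly what the single–point/segment hypothesis together with the positivity of $wv$ buys. The other delicate point is the nonzero–curvature asymptotic for the chord length, which is where the curvature hypothesis enters, since a degenerate, higher–order tangency would yield a weaker singularity that need not leave $H^1$; verifying that the positive continuous weight $wv$ does not cancel the $t_+^{1/2}$ singularity, rather than merely the pure characteristic function, is the remaining technical point.
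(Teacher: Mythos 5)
Your proposal is correct and follows essentially the same route as the paper's proof: the same cutoff decomposition isolating the tangent set (with the remainder handled by the non-tangential analysis of Theorems \ref{main_thm_1} and \ref{main_1}), the same use of Theorem \ref{thm_V_smth} plus Sobolev embedding to make the weight continuous and positive, the same jump argument for the segment case, and the same square-root chord asymptotic for the nonzero-curvature case. The only cosmetic difference is that the paper establishes the singularity by differentiating the localized integral $R(s)$ (obtaining boundary terms $t_\pm'(s)H(s,t_\pm(s))$ with $t_\pm'(s)\sim \pm\kappa(s-s_0)^{-1/2}$ plus a bounded remainder, so $R'\notin L^2$) rather than writing $R(s)=a(s)(s-s_0)_+^{1/2}+r(s)$ directly, which neatly disposes of the ``remaining technical point'' you flag about the continuous positive weight not cancelling the $(s-s_0)_+^{1/2}$ singularity.
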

\begin{proof}

Let $T \subset \mathbb{R}^2$ be the set of points where $L(s_0,\theta_0)$ is tangent to $\partial \Omega$. By hypothesis, $T$ is either a single point or a line segment and in either case let $\psi \in C_c^\infty(\mathbb{R}^2)$ be a cut-off function which is equal to $1$ in a neighbourhood of $T$. Then we have
\begin{equation}\label{eq:Rdecomp}
\mathcal{R}f(s,\theta_0) = \lambda \int_{L(s,\theta_0)}w \psi f e^{-\widetilde{\mathcal{V}_{L}}f(\vx,\theta_0)} \dd l + \lambda \int_{L(s,\theta_0)}w (1-\psi) f e^{-\widetilde{\mathcal{V}_{L}}f(\vx,\theta_0)} \dd l
\end{equation}
A slight variation of the proofs of Theorems \ref{main_thm_1} and \ref{main_1} shows that the second term on the right side of \eqref{eq:Rdecomp} will be in $H^{\beta,2}$ near $s_0$ for $\beta < \lambda$. Since $\lambda > 1$, it is therefore sufficient to show that the first term on the right side of \eqref{eq:Rdecomp} is not in $H^{1,2}$ near $s_0$. For notational convenience, let us write
\[
R(s) = \lambda \int_{L(s,\theta_0)}w \psi f e^{-\widetilde{\mathcal{V}_{L}}f(\vx,\theta_0)} \dd l
\]
for this term.

Before continuing, note that since $\psi f \in H^{\alpha,2}(\mathbb{R}^2)$ for any $\alpha< 1/2$ and since $\lambda > 1$, by Theorem \ref{thm_V_smth}, $\widetilde{V_L} f(\cdot,\theta_0) \in H^{3/2+,2}(\mathbb{R}^2)$ where the plus sign indicates membership in the intersection of spaces with order higher than $3/2$. Furthermore, all derivatives of $\widetilde{V_L} f(\cdot,\theta_0)$ will be in $H^{1/2+,2}(\mathbb{R}^2)$. Sobolev embedding also implies that $\widetilde{V_L} f(\cdot,\theta_0)$ is continuous.

Now, let us consider the case when $T$ is a line segment. Without loss of generality, we suppose that $\Theta_0 = \Theta(\theta_0) = (\cos(\theta),\sin(\theta))^T$ points into $\Omega$ on $T$. By continuity of the integrand,
\[
\lim_{s\rightarrow s_0^+} R(s) - \lim_{s\rightarrow s_0^-} R(s) = \int_T u e^{-\widetilde{\mathcal{V}_{L}}f(\vx,\theta_0)} \dd l > 0.
\]
Therefore $Rf(\cdot,\theta_0)$ is not continuous at $s_0$ and so cannot be in $H^{1,2}$ near $s_0$ by Sobolev embedding.

Now consider the case when $T$ is a single point $\vx_0 = s_0 \Theta_0 + t_0 (\Theta_0)_\perp$ where the curvature of $\partial \Omega$ does not vanish. Note that
\[
\lambda \int_{L(s,\theta_0)} w \psi u e^{-\widetilde{\mathcal{V}_{L}}f(\vx,\theta_0)} \dd l = R(s) + \lambda \int_{L(s,\theta_0)} w \psi u \chi_{\Omega^c} e^{-\widetilde{\mathcal{V}_{L}}f(\vx,\theta_0)} \dd l.
\]
Since $w \psi u \in C_c^\infty(\mathbb{R}^2)$ and $e^{-\widetilde{\mathcal{V}_{L}}f(\vx,\theta_0)} \in H^{3/2+,2}(\mathbb{R}^2)$, the term on the left side of this equation is in $H^{1,2}(\mathbb{R})$. Therefore, to show $R$ is not in $H^{1,2}(\mathbb{R})$ it is sufficient to show that either term on the right side is not in $H^{1,2}(\mathbb{R})$. Thus, by possibly replacing $\Omega$ by $\Omega^c$, we can assume without loss of generality that, at $\vx_0$, $\Theta_0$ points into $\Omega$ and when $\partial \Omega$ is oriented by $\Theta_0$ its curvature is positive.

Next, taking the support of $\psi$ to be sufficiently small, for $s<s_0$ we have $R(s) = 0$ while for $s>s_0$, since the curvature is positive, there will be exactly two values $t_\pm(s)$ such that $\vx_0 + s \Theta_0 + t_\pm(s) (\Theta_0)_\perp \in \partial \Omega$ within the support of $\psi$. We suppose $\pm t_\pm(s)>0$. For $s>s_0$ in the support of $\psi$, we then have
\[
R(s) = \lambda \int_{t_-(s)}^{t_+(s)} wu(\vx_0 + s  \Theta_0 + t (\Theta_0)_\perp) e^{-\widetilde{\mathcal{V}_{L}}f(\vx_0 + s  \Theta_0 + t (\Theta_0)_\perp,\theta_0)}\dd t.
\]
Let us write $H(s,t)$ for the integrand in the previous formula. Then $H \in H^{3/2+,2}(\mathbb{R}^2)$ and $\frac{\partial H}{\partial s} \in H^{1/2+,2}(\mathbb{R}^2)$ and by Theorem \ref{trace_thm} we can restrict to the segment of integration and the result is in $H^{0+,2}$ on this segment. Thus, for $s$ larger than $s_0$, we have
\[
R'(s) = t_+'(s) H(s,t_+(s)) - t'_-(s) H(s,t_-(s)) + G(s,t)
\]
where $G$ is bounded. Because the curvature of $\partial \Omega$ is positive at $\vx_0$, $t_\pm(s) \rightarrow 0$ as $s \rightarrow s_0^+$ and
\[
\lim_{s\rightarrow s_0^+} (s-s_0)^{1/2} t'_{\pm}(s) = \pm \kappa
\]
for some $\kappa > 0$. Since $H$ is continuous and $(s-s_0)^{1/2}$ is not in $H^{1,2}(\mathbb{R})$ near $s = s_0$, therefore $R$ is not in $H^{1,2}(\mathbb{R})$ near $s_0$ which completes the proof.
\end{proof}

We now have our main result regarding the singularities of $\mathcal{R}f$.
\begin{corollary}
\label{corr_main}
Let $f = u\chi_\Omega$ be as in Theorem \ref{main_thm_1} with $u>0$ and let $\varphi \in H^{\lambda,\infty}(\mathbb{R}^2)$ with $\lambda>1$. Let
\[
\begin{split}
\Pi_\theta & = \Big \{s\in \mathbb{R} \ : \ L(s,\theta) \mbox{ is tangent to $\partial \Omega$ exactly in one line segment or at }\\
&\hskip2cm \mbox{a single where the curvature of $\partial \Omega$ is not zero}\Big \}.
\end{split}
\]
Then,
\begin{equation}
\Pi_{\theta_0} \subset
\text{ssupp}(\mathcal{R}f(\cdot,\theta_0)) \backslash \text{ssupp}^1(\mathcal{R}f(\cdot,\theta_0)) \subset \text{ssupp}(R_wf)\cap \{\theta = \theta_0\}
\end{equation}
almost everywhere.
\end{corollary}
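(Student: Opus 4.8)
The plan is to reduce both inclusions to a single statement about local $H^{1,2}$ regularity of the one–dimensional function $\mathcal{R}f(\cdot,\theta_0)$, and then feed in Theorems \ref{main_1} and \ref{main_2} together with the classical microlocal analysis of the weighted Radon transform $R_w$. First I would record the reduction. Since $\mathcal{R}f(\cdot,\theta_0)$ is a (real) distribution of the single variable $s$, its wavefront set over a point $s_0$ is, by conjugate symmetry of the Fourier transform, either empty or all of $\{s_0\}\times(\mathbb{R}\setminus\{0\})$, and microlocal $H^{1,2}$ regularity is the same in the two conormal directions $\pm\xi$; having it in both is equivalent to local $H^{1,2}$ regularity near $s_0$. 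Consequently $s_0\in\text{ssupp}(\mathcal{R}f(\cdot,\theta_0))\setminus\text{ssupp}^1(\mathcal{R}f(\cdot,\theta_0))$ if and only if $\mathcal{R}f(\cdot,\theta_0)$ fails to lie in $H^{1,2}$ in every neighbourhood of $s_0$. Both inclusions then become statements comparing the set where $\mathcal{R}f(\cdot,\theta_0)\notin H^{1,2}$ with $\Pi_{\theta_0}$ and with the tangent set.

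For the first inclusion, take $s_0\in\Pi_{\theta_0}$. By definition $L(s_0,\theta_0)$ is tangent to $\partial\Omega$ either along a line segment or at a single point of nonvanishing curvature, and since $\lambda>1$ this is exactly the hypothesis of Theorem \ref{main_2}. That theorem gives $\mathcal{R}f(\cdot,\theta_0)\notin H^{1,2}$ near $s_0$, so by the reduction $s_0\in\text{ssupp}(\mathcal{R}f(\cdot,\theta_0))\setminus\text{ssupp}^1(\mathcal{R}f(\cdot,\theta_0))$. This inclusion is exact, requiring no exceptional set.

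For the second inclusion, take $s_0$ in the middle set, so $\mathcal{R}f(\cdot,\theta_0)\notin H^{1,2}$ near $s_0$. I would argue by the contrapositive of Theorem \ref{main_1}: were $L(s_0,\theta_0)$ not tangent to $\partial\Omega$, that theorem (applied with $\beta=1$, permissible since $\lambda>1$) would force $\mathcal{R}f(\cdot,\theta_0)\in H^{1,2}$ near $s_0$, a contradiction. Hence $L(s_0,\theta_0)$ is tangent to $\partial\Omega$. It then remains to identify the set of $s$ for which $L(s,\theta_0)$ is tangent to $\partial\Omega$ with $\text{ssupp}(R_wf)\cap\{\theta=\theta_0\}$. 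Here I would use that $R_w$ is an elliptic Fourier integral operator whose canonical relation carries $\WF(f)$, which for $f=u\chi_\Omega$ with $u>0$ is the conormal bundle of $\partial\Omega$, precisely to the wavefront set over those $(s,\theta)$ with $L(s,\theta)$ tangent to $\partial\Omega$; by the standard analysis of the Radon transform \cite{Q1993sing} the visible singularities of $R_wf$ are exactly these tangencies. This yields $(s_0,\theta_0)\in\text{ssupp}(R_wf)$ for all but a null set of tangent $s$.

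The hard part will be this last identification, and in particular accounting for the ``almost everywhere.'' The elliptic FIO calculus guarantees that a nondegenerate tangency produces an uncancelled singularity of $R_wf$, but at degenerate tangencies---a single contact point of vanishing curvature (an inflection), or several contact points sharing the conormal direction $\Theta_0$ whose principal symbols conspire to cancel---the correspondence between tangent lines and $\text{ssupp}(R_wf)$ can fail. I would show that, for fixed $\theta_0$, such exceptional $s$ form a set of measure zero, since the lines meeting $\partial\Omega$ in a higher–order or multiple tangency in direction $\Theta_0$ are cut out by additional equations on $s$ and are therefore finite (or null) in number. Since the middle set is already contained in the tangent set, discarding this null set gives $\Pi_{\theta_0}\subset\text{ssupp}(\mathcal{R}f(\cdot,\theta_0))\setminus\text{ssupp}^1(\mathcal{R}f(\cdot,\theta_0))\subset\text{ssupp}(R_wf)\cap\{\theta=\theta_0\}$ almost everywhere, and in combination the inclusions show these three sets coincide up to measure zero, which is the content of \eqref{main_res}.
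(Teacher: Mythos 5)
Your proposal is correct and takes essentially the same route as the paper, whose entire proof is the two-theorem application you spell out: Theorem \ref{main_2} gives the left inclusion, and the contrapositive of Theorem \ref{main_1} (with $\beta = 1 < \lambda$) plus the standard microlocal identification of $\text{ssupp}(R_wf)\cap\{\theta=\theta_0\}$ with the lines tangent to $\partial\Omega$ gives the right inclusion, with your reduction to local $H^{1,2}$ regularity of the one-variable function left implicit in the paper. One remark: your worry about cancellation at degenerate tangencies is unnecessary, since the canonical relation of $R_w$ is an injective immersion (Bolker condition) and distinct tangent points on $L(s_0,\theta_0)$ map to distinct covectors over $(s_0,\theta_0)$, so $\WF(R_wf) = \Cc\circ\WF(f)$ holds exactly and no exceptional null set is needed for the second inclusion -- though your conservative handling is harmless given the corollary's ``almost everywhere'' qualifier.
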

\begin{proof}
The result follows after applying Theorem \ref{main_1} with some $\lambda > 1$, and also using Theorem \ref{main_2}.
\end{proof}

Thus, for $\varphi$ smooth enough, the strongest (most irregular) singularities of the non-linear transform $\mathcal{R}f$ correspond to those of its linear counterpart, $R_w f$, except possibly at lines of multiple tangency. For example, if $\varphi  = \chi_{B'}$ is a characteristic function on a ball (assuming a spherical pixel), then $\hat{\varphi}$ decays at rate $3/2$ \cite{f1,f2} and therefore $\varphi \in H^{3/2,\infty}(\mathbb{R}^2)$, which is sufficient to apply the above results. Later, in section \ref{simulations}, we show simulated image reconstructions when $\varphi = \chi_{B'}$ to validate our theory.

If $\partial \Omega$ has non-zero curvature everywhere (for example $\Omega$ is strictly convex), then, as observed in the discussion after Theorem \ref{V_smooth}, $\widetilde{\mathcal{V}_L} f(\cdot,\phi) \in H^{\alpha,2}(\mathbb{R}^2)$ for any $\alpha < 1$. In this case, the hypotheses of Theorem \ref{main_2} and Corollary \ref{corr_main} can be weakened to $\lambda > 1/2$.

If the kernel $\varphi$ is not sufficiently regular, then it is possible for $\widetilde{\mathcal{V}_L}f$ to contribute singularties to $\mathcal{R}f$ which are the same Sobolev order as those which arise from $f$. Thus, it is not possible based only on the order of the singularities in $\mathcal{R}f$ to distinguish those which are normal to $\partial \Omega$ (i.e. in the wavefront set of $f$) from those which arise from $\widetilde{V_L}f$.


We now apply the above results to determine $\partial \Omega$. 

\begin{corollary}
\label{corr_main_2}
Let $f = u\chi_\Omega$ be as in Theorem \ref{main_thm_1} and let $\varphi \in H^{\lambda,\infty}(\mathbb{R}^2)$ with $\lambda > 1$. Further, suppose that the set $\Upsilon$ of $(s,\theta)$ such that $L(s,\theta)$ is tangent to $\partial \Omega$ on a disconnected set or at a single point with zero curvature is finite. 
Then $\mathcal{R}f$ determines $\partial \Omega$.
\end{corollary}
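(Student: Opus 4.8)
The plan is to use Corollary \ref{corr_main} to read the tangent lines of $\partial\Omega$ directly off the singularities of $\mathcal{R}f$, and then to reconstruct $\partial\Omega$ from its tangent lines by an envelope (support-function) argument. The starting point is the classical microlocal correspondence for the weighted Radon transform of $u\chi_\Omega$: the singular support $\text{ssupp}(R_wf)$ consists exactly of those pairs $(s,\theta)$ for which the line $L(s,\theta)$ is tangent to $\partial\Omega$. Denote this tangent-line set by $\mathcal{T}$. By Corollary \ref{corr_main}, for almost every $\theta_0$ the strong singular support $\text{ssupp}(\mathcal{R}f(\cdot,\theta_0))\setminus\text{ssupp}^1(\mathcal{R}f(\cdot,\theta_0))$ is sandwiched between $\Pi_{\theta_0}$ and $\mathcal{T}\cap\{\theta=\theta_0\}$. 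Since $\Pi_{\theta_0}$ is precisely $\mathcal{T}\cap\{\theta=\theta_0\}$ with the degenerate tangencies removed (tangency on a disconnected set, or at a single point of zero curvature), and since by hypothesis the set $\Upsilon$ of such degenerate tangencies is finite, the strong singular support recovers $\mathcal{T}$ up to finitely many points. Thus $\mathcal{R}f$ determines $\mathcal{T}$ away from a finite set.

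Next I would reconstruct $\partial\Omega$ as the envelope of the recovered tangent lines. The set $\mathcal{T}$ is the image of $\partial\Omega$ under the Gauss-type lift $\vx\mapsto(\vx\cdot\Theta(\vx),\theta(\vx))$, where $\theta(\vx)$ is the direction of the outward normal at $\vx$; away from $\Upsilon$ this is a smooth one-dimensional curve, and on any arc where $\partial\Omega$ has nonzero curvature the lift is a local diffeomorphism, so $\mathcal{T}$ may be written locally as a graph $s=s(\theta)$. Differentiating $s(\theta)=\vx(\theta)\cdot\Theta$ and using that $\vx'(\theta)$ is tangent to $\partial\Omega$ while $\Theta$ is normal (so $\vx'(\theta)\cdot\Theta=0$) gives $s'(\theta)=\vx(\theta)\cdot\Theta_\perp$, whence the boundary point is recovered by the envelope formula
\[
\vx(\theta)=s(\theta)\,\Theta+s'(\theta)\,\Theta_\perp .
\]
Applying this on every such arc recovers all of $\partial\Omega$ except possibly at the finitely many points of $\Upsilon$ and the finitely many boundary points where the curvature vanishes. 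Since $s(\theta)$ is smooth on these arcs, knowing it for almost every $\theta$ determines it, and its derivative, everywhere by continuity, so the "almost every $\theta$" in Corollary \ref{corr_main} costs nothing.

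To finish, I would observe that the recovered part of $\partial\Omega$ is dense in $\partial\Omega$: it is the complement of finitely many points in a smooth connected closed curve. Taking the closure of the recovered arcs therefore yields all of $\partial\Omega$, which proves that $\mathcal{R}f$ determines $\partial\Omega$.

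The main obstacle is exactly the bookkeeping at the exceptional set $\Upsilon$. At a point of vanishing curvature the Gauss lift degenerates, $s(\theta)$ need not be a local graph, and the envelope formula cannot be applied directly; at a point of multiple tangency several distinct boundary points map to the same element of $\mathcal{T}$, so one cannot invert the lift there. The substantive work is to argue that each such exceptional point is a limit of nondegenerate boundary points already recovered by the envelope construction, so that it is filled in by continuity once the closure is taken; this uses the finiteness of $\Upsilon$ together with the smoothness and connectedness of $\partial\Omega$. A secondary technical point is justifying that the detected strong singularities really do coincide with $\mathcal{T}$ pointwise (not merely almost everywhere in $(s,\theta)$), but this again follows from the fact that $\mathcal{T}$ is a smooth curve determined by a dense subset of its points.
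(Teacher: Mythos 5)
Your overall strategy (recover the tangent-line set from the strong singularities via Corollary \ref{corr_main}, then invert the Gauss lift) is the same in spirit as the paper's, which applies the inverse canonical relation of $R_w$ to the normal bundles of the recovered sinogram curves — that is the microlocal form of your envelope formula $\vx(\theta)=s(\theta)\Theta+s'(\theta)\Theta_\perp$. However, there is a genuine gap in your final step. You claim that the unrecovered part of $\partial\Omega$ consists of ``finitely many points,'' so that taking closures of the recovered arcs fills in everything. This is false under the stated hypotheses: the finiteness assumption is on $\Upsilon$, the set of lines tangent on a \emph{disconnected} set or at a single point of \emph{zero} curvature. A line tangent to $\partial\Omega$ along exactly one flat line segment is not in $\Upsilon$ (indeed it belongs to the good set $\Pi_\theta$ of Corollary \ref{corr_main}), so $\partial\Omega$ may contain entire flat segments, i.e.\ a continuum of zero-curvature points. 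On such a segment the Gauss lift collapses the whole segment to a single point of the sinogram, the graph $s=s(\theta)$ degenerates, and your envelope formula recovers nothing in its interior. Taking the closure of the arcs recovered from the strictly curved parts adds only the \emph{endpoints} of each flat segment, never its interior, so your construction misses these pieces of $\partial\Omega$ entirely.

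This is precisely the case the paper's proof spends its last paragraph on: after recovering the curved arcs $\alpha_j$ (your envelope step), it considers each line $L(s,\theta)$ tangent along flat segments, notes that the recovered arcs $\alpha_j$ approach tangency to that line at their endpoints, and then pairs up endpoints that are adjacent along $L(s,\theta)$ and connects them by straight segments to complete $\partial\Omega$. To repair your argument you would need to add this (or an equivalent) step: identify, from the recovered sinogram data, the tangent lines supporting flat pieces, and reconstruct those pieces as the segments of such lines joining adjacent limit points of the curved arcs. A second, smaller inaccuracy: $\partial\Omega$ need not be a single connected curve (the paper explicitly works with its components), though this does not by itself break your argument the way the flat-segment issue does.
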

\begin{proof}
First note that $\partial \Omega = \text{ssup}(f)$, and so we will prove that this singular support can be determined by $\mathcal{R}f$. By Corollary \ref{corr_main},
\[
\text{ssupp}(R_wf) \setminus \Upsilon\subset\bigcup_{\theta_0} \text{ssupp}(\mathcal{R}f(\cdot,\theta_0)) \backslash \text{ssupp}^1(\mathcal{R}f(\cdot,\theta_0)) \subset \text{ssupp}(R_wf).
\]
Since $\Upsilon$ is finite, $\text{ssup}(R_w f)$ contains no discrete points, and $\text{ssup}(R_w f)$ is closed, by taking the closure of the union in the previous displayed formula we can determine $\text{ssup}(R_wf)$ from $\mathcal{R}f$.

Let $\Upsilon'$ be the set of $(s,\theta)$ which are in $\Upsilon$ or are such that $L(s,\theta)$ is tangent to $\partial \Omega$ on a line segment. Then, $\text{ssup}(R_wf)\setminus \Upsilon'$ is a finite union of smooth curves, $\cup_{i=1}^k \gamma_i$, corresponding to the components of $\partial \Omega$ with points where the curvature is zero removed; each of these components gives rise to two of these curves. We point out that the closures of these curves will intersect at points in the sinogram space corresponding to lines tangent to $\partial \Omega$ at multiple points, and may not be smooth at points coresponding to lines which intersect $\partial \Omega$ at points of zero curvature.

Now consider the normal bundles of the curves $\gamma_i$, which will be in the wavefront set of $R_w f$. Applying the inverse of the canonical relation of $R_w$ to these normal bundles, we obtain the wavefront set of $\chi_\Omega$ with the points at which the curvature of $\partial \Omega$ vanishes or there is a tangent line simultaneously tangent to $\partial \Omega$ at another place removed. Projecting this set to $\mathbb{R}^2$ and taking its closure, we obtain $\partial \Omega$ with any flat segments removed. This set is composed of some finite number of smooth curves $\{\alpha_j\}_{j=1}^l$.

Now consider $(s,\theta)$ such that $L(s,\theta)$ is tangent to $\partial \Omega$ along some number of flat line segments. Since $\partial \Omega$ is smooth, there will be a number of the curves $\alpha_j$ which approach tangency to $L(s,\theta)$ at one, or possibly both, or their endpoints. Pairing up such endpoints which are adjacent along $L(s,\theta)$, we can then connect the endpoints to obtain the full set $\partial \Omega$. This completes the proof.



\end{proof}

Based on the above theory, and applying ideas from lambda tomography \cite{quinto2009electron}, we would expect to see the singularities of $f$ highlighted in a reconstruction of the form
\begin{equation}
f_r = R^* \frac{\mathrm{d}^k}{\mathrm{d}s^k} \mathcal{R}f,
\end{equation}
where $R = R_1$ (i.e., the classical Radon transform), $k \geq 1$ and $f_r$ denotes a reconstruction of $f$.

\subsection{Uniqueness results for the density value}
So far, we have derived uniqueness results for the singularities of $f$. We now focus on the smooth part of $f$ (namely $u$) in the case when $u = n_e$ is a constant density value.

\begin{theorem}
\label{ne_thm}
Let $f = u\chi_\Omega$ be as in Theorem \ref{main_thm_1} and let $u = n_e$ be constant, where $n_e \in [0, u_m]$ and $u_m$ is a maximum possible density value. Let $\varphi = \frac{1}{\epsilon^2}\chi_{B_\epsilon(0)}$ for some $\epsilon > 0$. Then, if $\epsilon$ is small enough, $\mathcal{R}f$ uniquely determines $n_e$.
\end{theorem}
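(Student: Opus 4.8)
The plan is to first use the singularity results already established to reduce the problem to the recovery of a single scalar, and then to read that scalar off from the strength of a boundary singularity of $\mathcal{R}f$ at a carefully chosen tangent direction. By Corollary \ref{corr_main_2}, $\mathcal{R}f$ determines $\partial\Omega$, hence $\Omega$; since $w$, $\lambda$, $a$, $b$, $\psi$ and $\varphi$ are all known, the nonnegative function $A_0(\vx,\theta) := \widetilde{\mathcal{V}_{a,b}}\chi_\Omega(\vx,\theta)$ is known, and by linearity of $\mathcal{V}_{a,b}$ in $f = n_e \chi_\Omega$ the data can be written
\[
\mathcal{R}f(s,\theta) = \lambda\, n_e \int_{L(s,\theta)} w\,\chi_\Omega(\vx)\, e^{-n_e A_0(\vx,\theta)} \,\dd l.
\]
The only remaining unknown is the scalar $n_e \in [0,u_m]$. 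The case $n_e = 0$ gives $\mathcal{R}f \equiv 0$ and is thus distinguished, so I would assume $n_e > 0$ and aim to show the map $n \mapsto \mathcal{R}f$ is injective on $(0,u_m]$.

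The key geometric step is to choose a direction $\Theta_0 = (\cos\theta_0,\sin\theta_0)$ for which the support line $L(s_{\max},\theta_0)$, with $s_{\max} = \max_{\vx\in\overline\Omega}\, \vx\cdot\Theta_0$, is tangent to $\partial\Omega$ at a single point $\vx_1$ of nonzero curvature; such a $\Theta_0$ exists because $\partial\Omega$ is a smooth closed curve and hence has a point of positive curvature, and the directions giving a degenerate or nonunique support point are negligible. At $\vx_1$ the two rays of the V satisfy $\Phi\cdot\Theta_0 = \sin(2\psi) > 0$ and $\Phi'\cdot\Theta_0 = 0$ (a direct computation from Definition \ref{vline_def}), while $\Omega \subset \{\vx\cdot\Theta_0 \leq s_{\max}\}$. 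Hence the ray $\{\vx_1 + t\Phi\}$ leaves this half-plane immediately and $\{\vx_1 + t\Phi'\}$ stays on the tangent line, so $\mathcal{V}_{a,b}\chi_\Omega(\vx_1,\theta_0) = 0$ exactly. The same slab estimate, together with the curvature bound on chord lengths, shows $A_0(\vx_1,\theta_0) = (\varphi \ast \mathcal{V}_{a,b}\chi_\Omega(\cdot,\theta_0))(\vx_1) = O(\sqrt\epsilon)$, so $A_0(\vx_1,\theta_0) \to 0$ as $\epsilon \to 0$.

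With this configuration fixed, I would repeat the asymptotic analysis in the proof of Theorem \ref{main_2}, but tracking the constant: since $g = e^{-n_e A_0(\cdot,\theta_0)}$ is continuous (by the regularity in Theorem \ref{thm_V_smth} and Sobolev embedding, as used in the proof of Theorem \ref{main_2}) and the chord of $\Omega$ cut by $L(s,\theta_0)$ near $\vx_1$ has length $\sim 2\sqrt{2(s_{\max}-s)/\kappa}$, one obtains
\[
\mathcal{R}f(s,\theta_0) \sim K(n_e)\, \sqrt{(s_{\max}-s)_+}, \qquad K(n_e) = C\, n_e\, e^{-n_e A_0(\vx_1,\theta_0)},
\]
where $C = 2\lambda\, w(\vx_1)\sqrt{2/\kappa} > 0$ and the curvature $\kappa$ are known from $\Omega$. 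The coefficient $K(n_e)$ is recoverable from $\mathcal{R}f$ as the strength of this one-sided square-root singularity at $s_{\max}$. Choosing $\epsilon$ small enough that $A_0(\vx_1,\theta_0) < 1/u_m$, the elementary map $n \mapsto n\, e^{-n A_0(\vx_1,\theta_0)}$ is strictly increasing on $[0,u_m]$, so $K$ is injective there and $K(n_e)$ determines $n_e$.

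The routine parts are the direction computation, the slab/curvature estimate $A_0(\vx_1,\theta_0) = O(\sqrt\epsilon)$, and the scalar monotonicity. The main obstacle is the quantitative extraction of $K(n_e)$: Theorem \ref{main_2} only asserts failure of $H^{1,2}$ regularity, so I would need to upgrade its square-root asymptotic to a genuine limit for the coefficient, controlling the continuous factor $g$ and the smooth weight $w$ uniformly near $\vx_1$ and checking the remainder is of lower order. A secondary technical point is guaranteeing, for the given $\Omega$, a direction $\theta_0$ whose global support point is a single nonzero-curvature point; if instead the support line is tangent along a flat piece, the line-segment case of Theorem \ref{main_2} applies and yields a jump of size $\lambda\, n_e \int_T w\, e^{-n_e A_0}\,\dd l$ that plays the same role.
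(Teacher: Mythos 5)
Your proposal is correct in outline but follows a genuinely different route from the paper. The paper's proof never extracts a singularity coefficient: it fixes the same geometric configuration you do (a support line $L(s_0,\theta_0)$ tangent to $\partial \Omega$ at a single point of nonzero curvature, with $\Omega$ entirely on one side), restricts to $s \in [s_0-\delta, s_0]$, and differentiates the data value with respect to $n_e$ under the integral sign, obtaining
\[
\frac{\dd}{\dd n_e}\mathcal{R}f(s,\theta_0) = \lambda \int_{L(s,\theta_0)} w\, \chi_\Omega \left[ 1 - n_e \widetilde{\mathcal{V}_L}\chi_\Omega \right] e^{-n_e \widetilde{\mathcal{V}_L}\chi_\Omega}\, \dd l .
\]
The same slab/curvature estimate you use to show $A_0(\vx_1,\theta_0) = O(\sqrt{\epsilon})$ shows that $\widetilde{\mathcal{V}_L}\chi_\Omega$ is uniformly small on the relevant chords as $\epsilon, \delta \to 0$, so the bracket is positive for all $n_e \leq u_m$, the data value at any fixed $s \in [s_0-\delta,s_0]$ is strictly increasing in $n_e$, and injectivity is immediate. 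Your route replaces this with recovery of the coefficient $K(n_e) = C n_e e^{-n_e A_0(\vx_1,\theta_0)}$ of the square-root singularity, followed by monotonicity of $n \mapsto n e^{-n A_0}$ when $A_0 < 1/u_m$ --- note this is exactly the same scalar inequality ($1 - n A_0 > 0$) that drives the paper's argument, so both proofs hinge on the identical smallness mechanism. What your route costs is the obstacle you honestly flag: upgrading Theorem \ref{main_2} from a negative regularity statement to a genuine asymptotic $\mathcal{R}f(s,\theta_0) \sim K(n_e)\sqrt{(s_{\max}-s)_+}$. This is achievable with ingredients already in the paper (continuity of $g$ via Sobolev embedding, localization of $L(s,\theta_0)\cap\Omega$ near $\vx_1$ by compactness and single tangency, and the chord asymptotics $t_\pm(s) \sim \pm\sqrt{2(s_{\max}-s)/\kappa}$ from the proof of Theorem \ref{main_2}), but it is extra work that the paper's direct monotonicity argument avoids; what it buys is an explicit formula linking the observable singularity strength to $n_e$. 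One caveat: your first step invokes Corollary \ref{corr_main_2}, which carries the extra hypothesis that the exceptional set $\Upsilon$ is finite --- a hypothesis not assumed in Theorem \ref{ne_thm}. The paper's proof (and, in fact, the rest of yours) treats $\Omega$ as known and proves injectivity of $n_e \mapsto \mathcal{R}(n_e\chi_\Omega)$ for that fixed $\Omega$; reconstruction of $\Omega$ itself is deferred to the corollary following Theorem \ref{ne_thm}. You should therefore drop that first step and regard $\Omega$ as given, since otherwise you are proving a different statement under strictly stronger hypotheses.
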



\begin{proof}
We have
\begin{equation}
\mathcal{R}f(s,\theta) = \lambda n_e\int_{L(s,\theta)}w \chi_\Omega(\vx) e^{-n_e \widetilde{ \mathcal{V}_L }\chi_\Omega(\vx, \theta)}\mathrm{d}l.
\end{equation}
Since $\partial \Omega$ is smooth, there exits a $(s_0,\theta_0)$ such that $L(s_0,\theta_0) \cap \partial \Omega = \{\vx_0\}$ where $\partial \Omega$ has non-zero curvature at $\vx_0$ and $f(\vx) = 0$ for $\vx \cdot \Theta_0 \geq s_0$. Let us fix $\theta = \theta_0$ and consider $s\in [s_0-\delta,s_0]$. Then,
\begin{equation}
\label{equ_der}
\frac{\mathrm{d}}{\mathrm{d}n_e}\mathcal{R}f(s,\theta) = \lambda \int_{L(s,\theta)}w \chi_\Omega\left[1 - n_e\widetilde{ \mathcal{V}_L } \chi_\Omega\right] e^{-n_e \widetilde{ \mathcal{V}_L }\chi_\Omega}\mathrm{d}l.
\end{equation}
By construction, the term $\widetilde{ \mathcal{V}_L }\chi_\Omega$ in the square bracket in the above integral goes to zero as $\epsilon, \delta \to 0$. Thus, for $\epsilon$ and $\delta$ small enough, $1 - n_e \widetilde{ \mathcal{V}_L }\chi_\Omega > 0$ for $n_e \leq u_m$. Therefore, $\mathcal{R}f(s,\theta_0)$ for any fixed $s \in [s_0-\delta,s_0]$, is a strictly monotone increasing function of $n_e$ (noting that the remaining terms in \eqref{equ_der} are strictly positive), and these data values thus correspond uniquely to $n_e$.
\end{proof}

\begin{corollary}
Let $f = u\chi_\Omega$ be as in Corollary \ref{corr_main_2} with $u = n_e$ be constant. Let $\varphi = \frac{1}{\epsilon^2}\chi_{B_\epsilon(0)}$ with $\epsilon > 0$ small enough. Then, $\mathcal{R}f$ determines $f$ uniquely.
\end{corollary}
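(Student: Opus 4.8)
The plan is to assemble the full reconstruction of $f = n_e\chi_\Omega$ from the two uniqueness results already in hand: Corollary~\ref{corr_main_2}, which recovers the geometry $\partial\Omega$, and Theorem~\ref{ne_thm}, which recovers the scalar density $n_e$. Since $f$ is completely specified by the pair $(\Omega, n_e)$, it suffices to determine each factor separately from $\mathcal{R}f$ and then combine them.

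First I would verify that the chosen kernel $\varphi = \frac{1}{\epsilon^2}\chi_{B_\epsilon(0)}$ meets the regularity hypothesis of Corollary~\ref{corr_main_2}. A normalized characteristic function of a ball in two dimensions has Fourier transform decaying at rate $3/2$ (as noted in the discussion following Corollary~\ref{corr_main}, using \cite{f1,f2}), so $\varphi \in H^{3/2,\infty}(\mathbb{R}^2)$ with $3/2 > 1$. Crucially, a scaling argument gives $\widehat{\varphi}(\xi) = \widehat{\chi_{B_1(0)}}(\epsilon\xi)$, so this Sobolev order is independent of $\epsilon$; the requirement $\lambda > 1$ therefore holds for every $\epsilon > 0$. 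Since $f$ is taken exactly as in Corollary~\ref{corr_main_2}, the finiteness of the exceptional set $\Upsilon$ is inherited. Applying Corollary~\ref{corr_main_2} then shows that $\mathcal{R}f$ determines $\partial\Omega$, and hence $\Omega$ itself as the bounded region enclosed by its smooth boundary inside $B$.

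Next I would invoke Theorem~\ref{ne_thm} to recover $n_e$. Its hypotheses match the present setting verbatim, namely $u = n_e$ constant and $\varphi = \frac{1}{\epsilon^2}\chi_{B_\epsilon(0)}$, provided $\epsilon$ is chosen small enough, which is precisely our standing assumption. Thus $\mathcal{R}f$ uniquely determines $n_e$. Combining the two conclusions, both $\Omega$ and $n_e$ are determined by $\mathcal{R}f$, and therefore so is $f = n_e\chi_\Omega$.

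The only point requiring care is that a single choice of $\epsilon$ must simultaneously satisfy the smallness requirement of Theorem~\ref{ne_thm} and supply a kernel regular enough for Corollary~\ref{corr_main_2}. As observed above there is no tension: the regularity $\varphi \in H^{3/2,\infty}(\mathbb{R}^2)$ holds for all $\epsilon > 0$, so one simply takes $\epsilon$ as small as Theorem~\ref{ne_thm} demands. Hence the two results apply compatibly and the corollary follows.
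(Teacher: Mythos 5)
Your overall strategy is exactly the paper's: check that $\varphi = \frac{1}{\epsilon^2}\chi_{B_\epsilon(0)}$ lies in $H^{3/2,\infty}(\mathbb{R}^2)$ so that Corollary \ref{corr_main_2} applies and recovers $\partial\Omega$, then invoke Theorem \ref{ne_thm} to recover $n_e$, and combine. Your scaling observation that the Sobolev regularity of $\varphi$ is independent of $\epsilon$, so the smallness requirement of Theorem \ref{ne_thm} and the regularity requirement of Corollary \ref{corr_main_2} are compatible, is a nice explicit touch that the paper leaves implicit.

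There is, however, one genuine gap: the step where you pass from $\partial\Omega$ to $\Omega$. You assert that $\Omega$ is ``the bounded region enclosed by its smooth boundary inside $B$.'' The hypotheses, inherited from Theorem \ref{main_thm_1} via Corollary \ref{corr_main_2}, only require $\Omega \subset B$ to be a compact domain with smooth boundary; simple connectivity is not assumed. For a non-simply-connected $\Omega$ --- e.g.\ the elliptic annulus used in the paper's own simulations --- the ``bounded region enclosed by $\partial\Omega$'' is the full disk bounded by the outer curve, which is not $\Omega$: knowing $\partial\Omega$ does not by itself tell you which components of $\mathbb{R}^2\setminus\partial\Omega$ belong to $\Omega$. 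The paper closes this with a short parity argument: for $x \notin \partial\Omega$, choose a ray $L_x$ emanating from $x$ that is nowhere tangent to $\partial\Omega$, and let $n$ be the number of points of $\partial\Omega \cap L_x$; since $\Omega$ is compact, points sufficiently far along the ray lie outside $\Omega$, so $x \in \Omega$ if and only if $n$ is odd. Adding this argument (or explicitly restricting to simply connected $\Omega$, where the Jordan curve theorem suffices) completes your proof.
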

\begin{proof}
By Corollary \ref{corr_main_2}, and since $\hat{\varphi}$ decays at rate $3/2 > 1$, $\mathcal{R} f$ determines $\partial \Omega$ uniquely. By Theorem \ref{ne_thm}, $u = n_e$ is also determined. We now argue that $\partial \Omega$ determines $\Omega$. Indeed, take any $x \in \mathbb{R}^2\setminus \partial \Omega$ and let $L_x$ be an infinite ray with $x$ as the origin which is never tangent to $\partial \Omega$. Let $n$ be the number of points in $\partial \Omega \cap L_r$.  Since $\Omega$ is compact, points sufficiently far along this ray will be outside of $\Omega$ and from this we can conclude that if $n$ is odd $x \in \Omega$ and if $n$ is even then $x \notin \Omega$. Thus, $\Omega$ is also determined and this completes the proof.
\end{proof}


\section{Simulated image reconstructions}
\label{simulations}
In this section, we present simulated image reconstructions to verify our theory. We first show edge reconstructions and then give an example where we determine the density value.
\subsection{Data simulation}
We simulate data using the non-linear model in \eqref{model_1} for $s\in [-\sqrt{2}, \sqrt{2}]$ (noting that $\text{supp}(f)\subset B$) and $\theta \in [0,2\pi]$. The image resolution is $N \times N$ with $N = 200$ and $(s,\theta)$ are sampled uniformly on $[-\sqrt{2}, \sqrt{2}] \times [0,2\pi]$ on a $282 \times 360$ grid, sampling 282 points in $s$ and 360 points in $\theta$, respectfully. Letting $\vb$ be a vector of data samples, we simulate noisy data \begin{equation}
\vb_\gamma = \vb + \gamma \times \frac{\|\vb\|_2}{\sqrt{k}}\eta,
\end{equation}
where $k$ is the length of $b$, $\eta \sim \mathcal{N}(0,1)$ is a vector of draws from the standard normal distribution, and $\gamma$ controls the noise level. We set $\varphi = \chi_{B_{0.02}(0)}$ (i.e., the smoothing kernel for the V-line transform) to be a characteristic function on a ball with radius 0.02, which is the length of 2 pixels.

\subsection{Reconstruction methods}
\label{rec_mthds}
Here we detail our reconstruction methods. We consider filtered backprojection ideas from lambda tomography and algebraic reconstruction methods as detailed below:
\begin{enumerate}
\item Filtered backprojection (FBP) - here, we recover the edges of $f$ via
\begin{equation}
f_r = R^* \frac{\mathrm{d}^2}{\mathrm{d}s^2} (\mathcal{R}f)_\gamma.
\end{equation}
This is a classical idea in lambda tomography \cite{quinto2009electron}. Here, $(\mathcal{R}f)_\gamma$ is the true data perturbed by Gaussian noise as discussed above.
\item Landweber iteration  - Let $A$ be the discretized form of the linear Radon transform, $R$. For this method, we apply standard Landweber iteration \cite{landweber1951iteration}  with $\vb_\gamma$ (i.e., the non-linear data) as our data, and using $A$ as the operator. To implement the Landweber method, we use the code supplied in \cite{AIRtools}.
\item Total Variation (TV) -  here, we find
\begin{equation}
\label{solve}
\argmin_{\vx} \|A\vx - \vb_\gamma\|^2 + \lambda \text{TV}(\vx),
\end{equation}
where $\lambda$ is the smooting parameter, $\text{TV}(\vx) = \sqrt{\|\nabla\vx\|_2^2 + \beta^2}$ is a modified TV norm, and $\beta$ is an additional smoothing parameter that is introduced so that the gradient of $G$ is defined at zero. To implement TV, we use the code of \cite{ehrhardt2014joint}.
\end{enumerate}
In all methods detailed above, we are treating the non-linear data as linear, and applying some inversion based on $R_w$. While we cannot recover accurately the smooth parts of the density in this way, we expect the edges of $f$ to be highlighted in the reconstruction by Corollary \ref{corr_main_2}. To extract the edge map of $f$ from the reconstructed image, we use edge detection methods. Specifically, we apply the Matlab function, ``edge."

\subsection{Image phantoms and physical parameters}
We consider the image phantoms displayed in figure \ref{F1}. The left-hand phantom in figure \ref{F1a} has non-convex boundary and is supported on a simply connected domain and the right-hand phantom in figure \ref{F1b} is an elliptic annulus with two disconnected convex boundaries (one exterior and one interior). 
\begin{figure}
\centering
\begin{subfigure}{0.3\textwidth}
\includegraphics[width=0.9\linewidth, height=7cm, keepaspectratio]{NC_phantom}
\subcaption{non-convex boundary} \label{F1a}
\end{subfigure}
\begin{subfigure}{0.3\textwidth}
\includegraphics[width=0.9\linewidth, height=7cm, keepaspectratio]{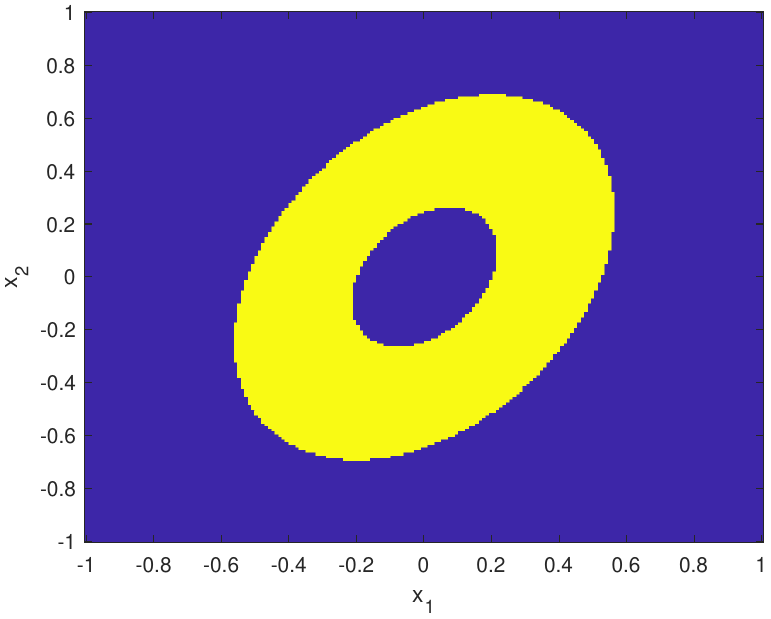}
\subcaption{elliptic annulus} \label{F1b}
\end{subfigure}
\caption{Ground truth phantoms.}
\label{F1}
\end{figure}
For these examples, the phantom material is water with relative electron density 1. One unit of length is 25cm, i.e., the square domain that the phantoms are shown on ($[-1,1]$) is $50\text{cm}^2$, and thus the size of the phantoms is on the order of 10cm (a significant size). We assume the sources are all monocromatic gamma rays with energy $1.17$MeV, which is one of the emitted energies of Co-60, a common gamma ray source used in industrial radiography \cite[section 6.1.1]{national2021radioactive}.

\subsection{Edge recovery results}
Here we present our edge reconstruction results. In figure \ref{F2}, we show edge map reconstruction of the non-convex phantom using the reconstruction methods detailed above. The noise level used throughout this section is $\gamma = 0.01$ (i.e., $1\%$ noise). As expected, the edges of $f$ are highlighted in the FBP reconstruction, although some edges have greater magnitude than others due to attenuation and solid angle effects. In particular, the convex parts of the boundary are generally better highlighted than the non-convex parts as the incoming and outgoing rays are less likely to interact with the material when $L(s,\theta)$ is tangent to a convex part of the boundary. The convex parts of the boundary of the phantom are also typically closer to the source and detector array, and so there is less reduction in signal due to solid angle. This is also due to the specific scanning geometry used here in figure \ref{fig1}.
\begin{figure}
\centering
\begin{subfigure}{0.24\textwidth}
\includegraphics[width=0.9\linewidth, height=3.2cm, keepaspectratio]{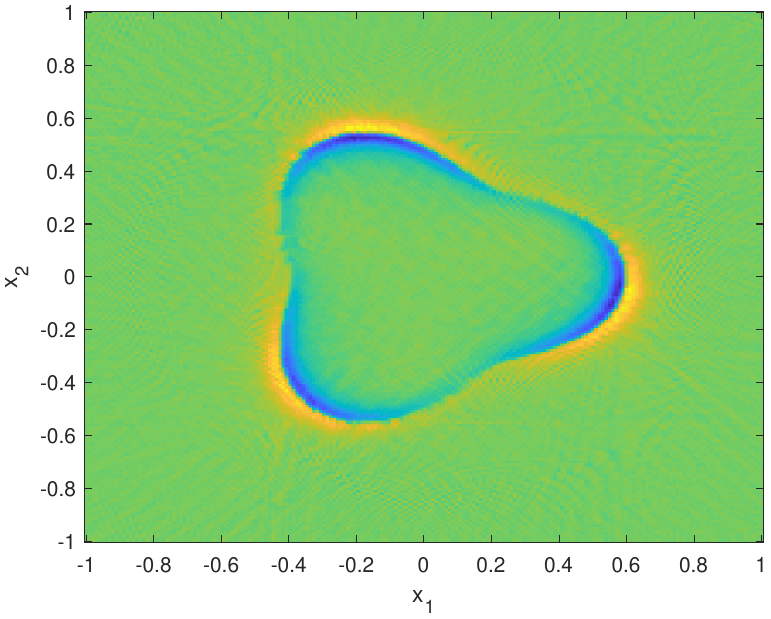}
\end{subfigure}
\begin{subfigure}{0.24\textwidth}
\includegraphics[width=0.9\linewidth, height=3.2cm, keepaspectratio]{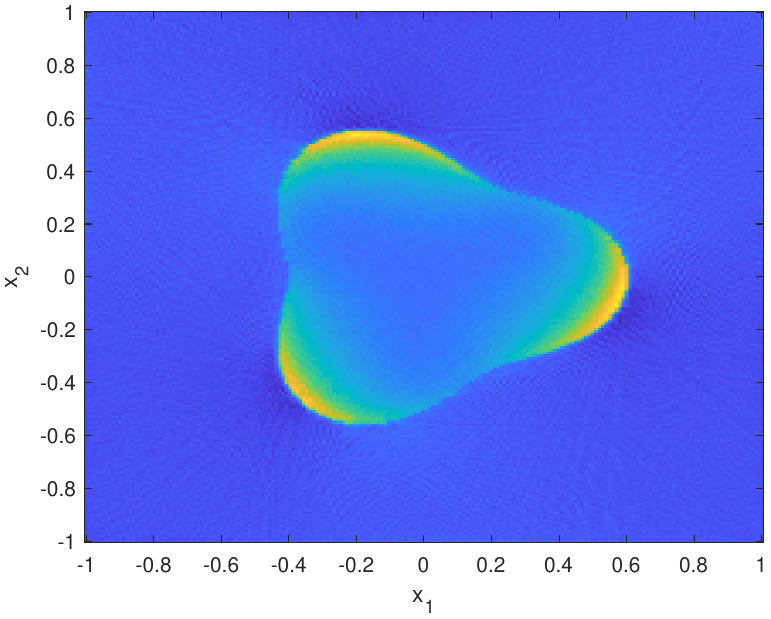}
\end{subfigure}
\begin{subfigure}{0.24\textwidth}
\includegraphics[width=0.9\linewidth, height=3.2cm, keepaspectratio]{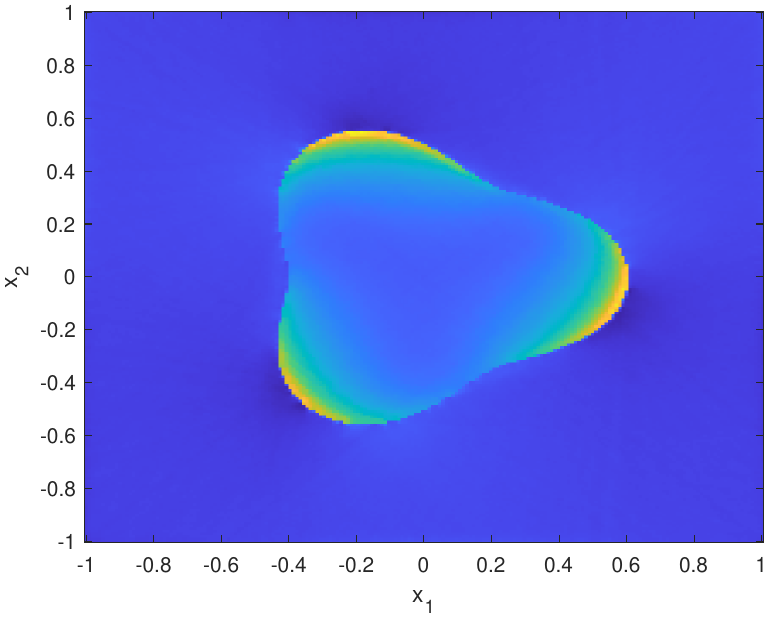}
\end{subfigure}
\\
\begin{subfigure}{0.24\textwidth}
\includegraphics[width=0.9\linewidth, height=3.2cm, keepaspectratio]{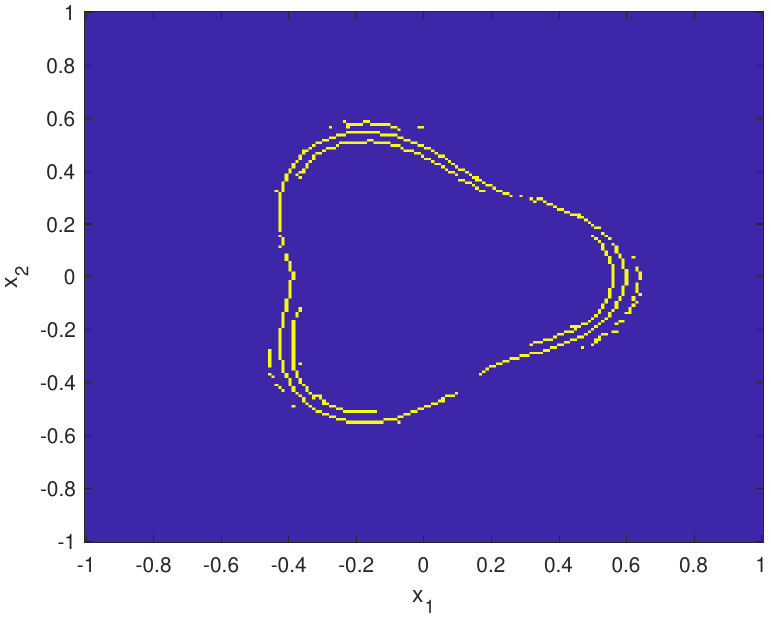}
\subcaption*{FBP}
\end{subfigure}
\begin{subfigure}{0.24\textwidth}
\includegraphics[width=0.9\linewidth, height=3.2cm, keepaspectratio]{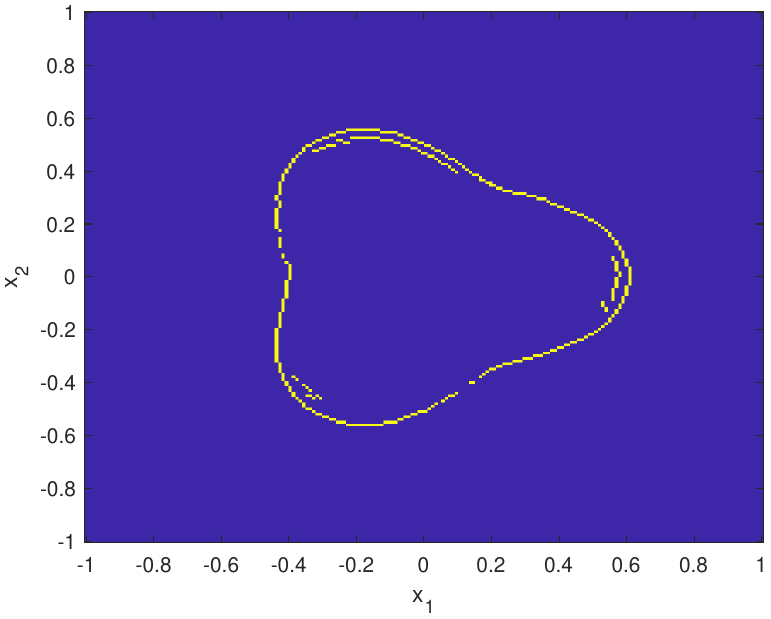}
\subcaption*{Landweber}
\end{subfigure}
\begin{subfigure}{0.24\textwidth}
\includegraphics[width=0.9\linewidth, height=3.2cm, keepaspectratio]{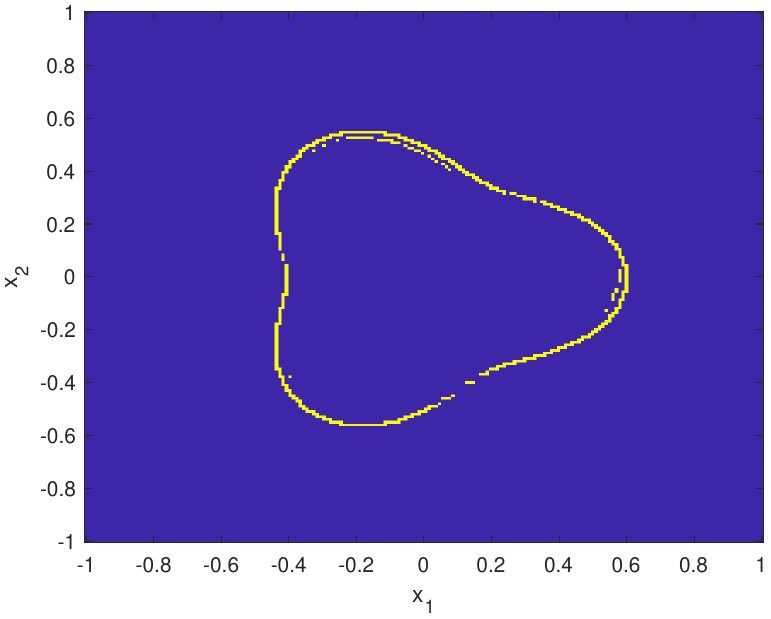}
\subcaption*{TV}
\end{subfigure}
\caption{Non-convex phantom boundary reconstructions with $\gamma = 0.01$. The top row are the reconstructions and the bottom row are the extracted edge maps.}
\label{F2}
\end{figure}
The Landweber and TV reconstructions offer similar results, although the TV method appears better at handling noise which leads to a more accurate realization of the edges. As noted in section \ref{rec_mthds}, these reconstructions do not recover accurately the smooth parts of $f$, and are purely used for edge recovery.

In the non-convex phantom reconstruction, some edges were highlighted more than others due to attenuation and solid angle effects. In the next example, we show such weighting due to physical modeling can lead to more detrimental results in terms of edge recovery. See figure \ref{F3}, where we show edge reconstructions of the ellptic annulus phantom. In this case, as the gamma rays cannot reach the interior boundary of the annulus without attenuating through a significant amount of material, the interior edges are masked and barely visible in the reconstruction. The interior edges are also typically farther away from the sources and detectors by the geometry in figure \ref{fig1}, and so there is less signal contribution from the interior parts of the density due to solid angle. 
\begin{figure}
\centering
\begin{subfigure}{0.24\textwidth}
\includegraphics[width=0.9\linewidth, height=3.2cm, keepaspectratio]{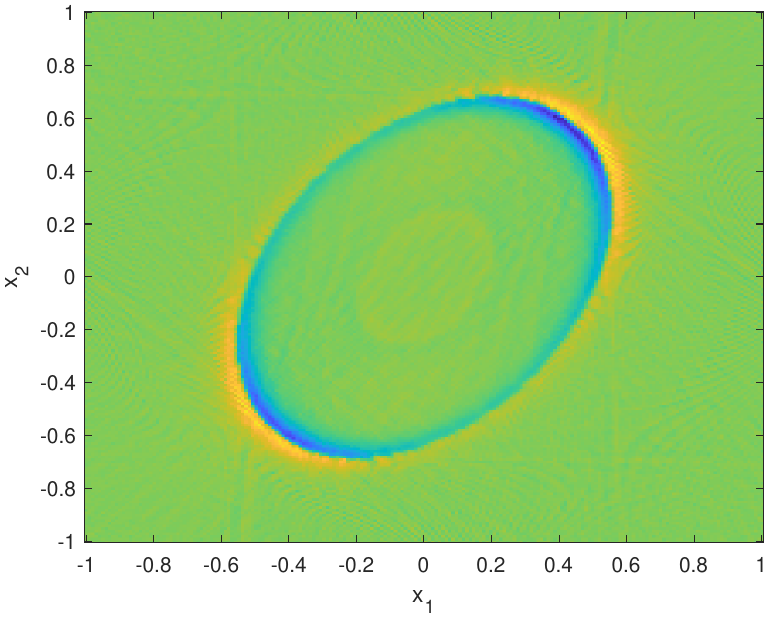}
\end{subfigure}
\begin{subfigure}{0.24\textwidth}
\includegraphics[width=0.9\linewidth, height=3.2cm, keepaspectratio]{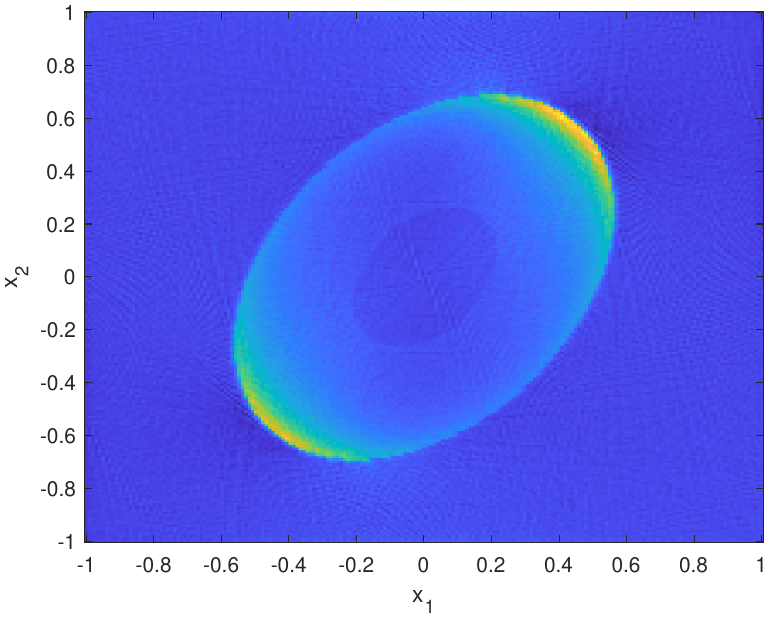}
\end{subfigure}
\begin{subfigure}{0.24\textwidth}
\includegraphics[width=0.9\linewidth, height=3.2cm, keepaspectratio]{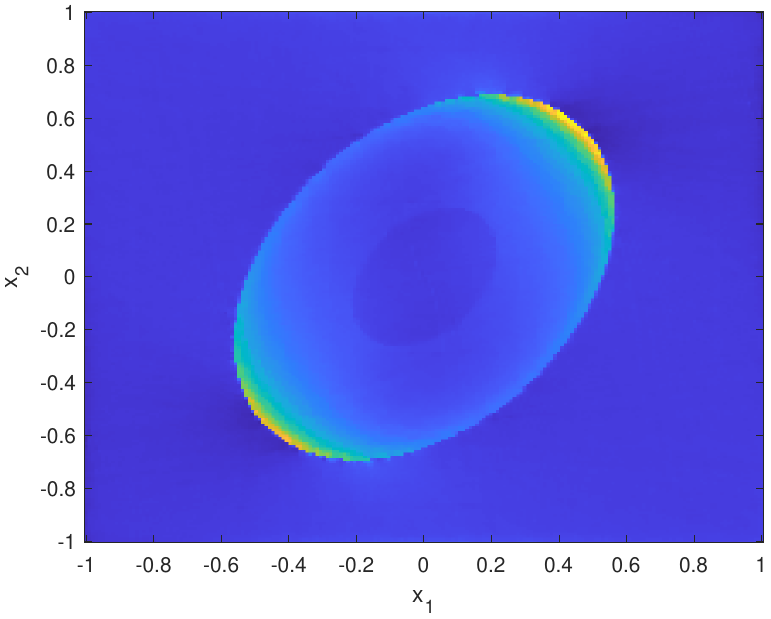}
\end{subfigure}
\\
\begin{subfigure}{0.24\textwidth}
\includegraphics[width=0.9\linewidth, height=3.2cm, keepaspectratio]{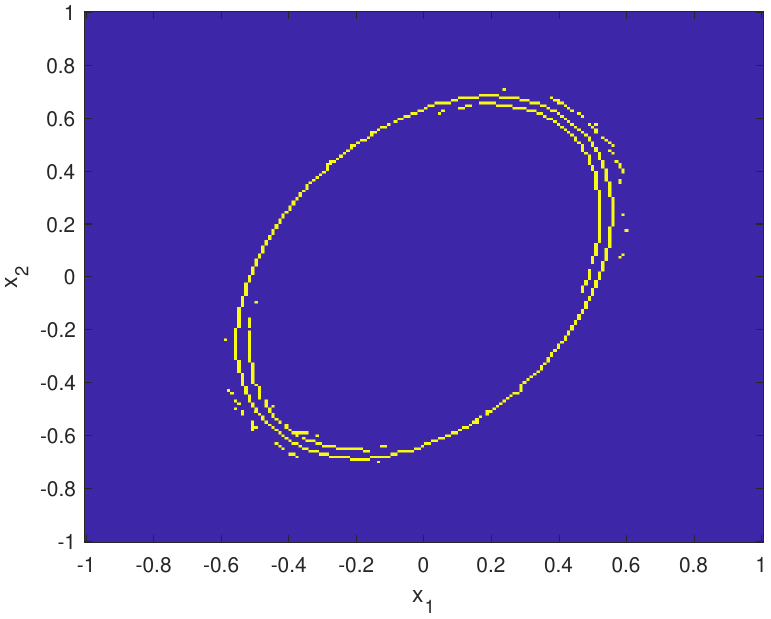}
\subcaption*{FBP}
\end{subfigure}
\begin{subfigure}{0.24\textwidth}
\includegraphics[width=0.9\linewidth, height=3.2cm, keepaspectratio]{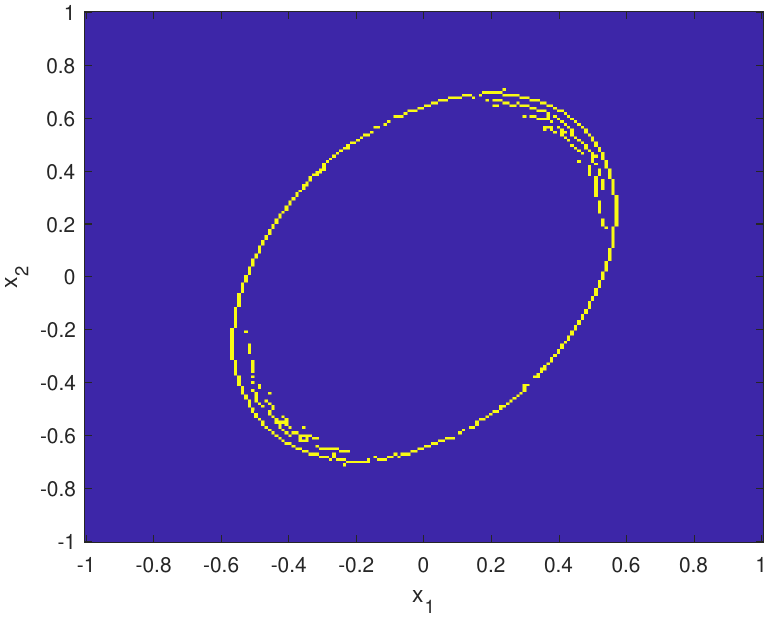}
\subcaption*{Landweber}
\end{subfigure}
\begin{subfigure}{0.24\textwidth}
\includegraphics[width=0.9\linewidth, height=3.2cm, keepaspectratio]{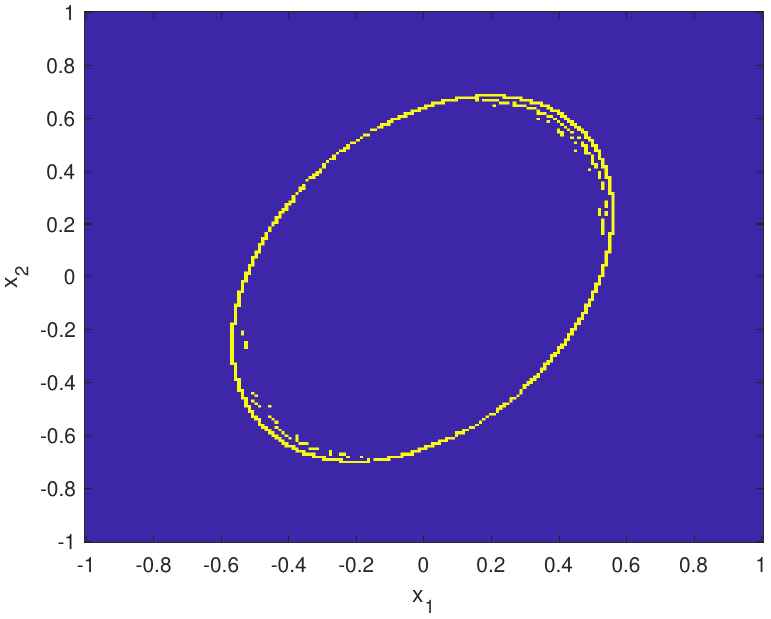}
\subcaption*{TV}
\end{subfigure}
\caption{Elliptic annulus phantom boundary reconstructions with $\gamma = 0.01$. The top row are the reconstructions and the bottom row are the extracted edge maps.}
\label{F3}
\end{figure}
To explain this further, see figure \ref{F4}, where we have plotted sinograms of the non-linear and linear data. In the linear sinograms in figures \ref{F4c} and \ref{F4d}, the parts of the sinogram which correspond to the interior edges of the elliptic annulus are clearly highlighted. The same is not true in the non-linear sinograms in figures \ref{F4a} and \ref{F4b}. By Corollary \ref{corr_main}, the strongest singularities of the non-linear transform and those of the linear transform coincide, at least in terms of Sobolev order. However, Sobolev order is invariant to scaling and thus in practice in this example, the interior edges of the elliptic annulus are not visible due to attenuation. 
\begin{figure}
\centering
\begin{subfigure}{0.24\textwidth}
\includegraphics[width=0.9\linewidth, height=3.2cm, keepaspectratio]{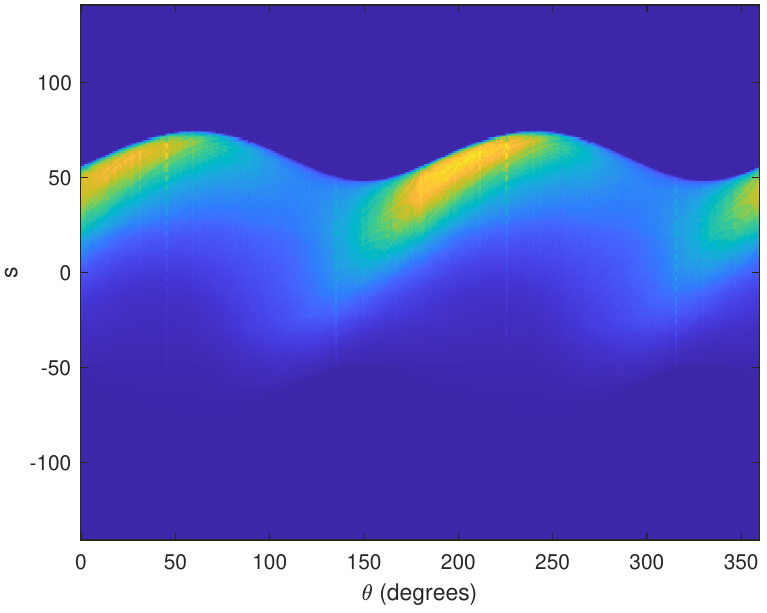}
\subcaption{$\mathcal{R}f$}  \label{F4a}
\end{subfigure}
\begin{subfigure}{0.24\textwidth}
\includegraphics[width=0.9\linewidth, height=3.2cm, keepaspectratio]{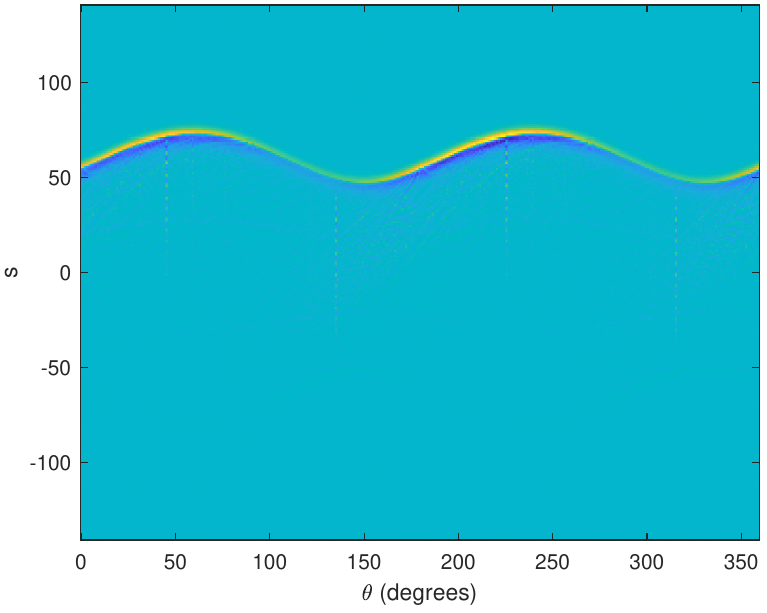}
\subcaption{$\frac{\mathrm{d}^2}{\mathrm{d}s^2}\mathcal{R}f$}  \label{F4b}
\end{subfigure}
\begin{subfigure}{0.24\textwidth}
\includegraphics[width=0.9\linewidth, height=3.2cm, keepaspectratio]{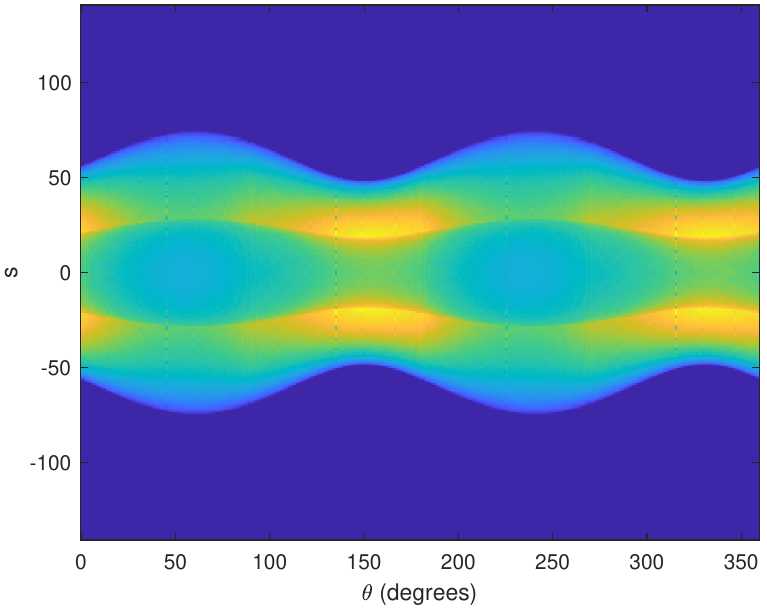}
\subcaption{$R_wf$} \label{F4c}
\end{subfigure}
\begin{subfigure}{0.24\textwidth}
\includegraphics[width=0.9\linewidth, height=3.2cm, keepaspectratio]{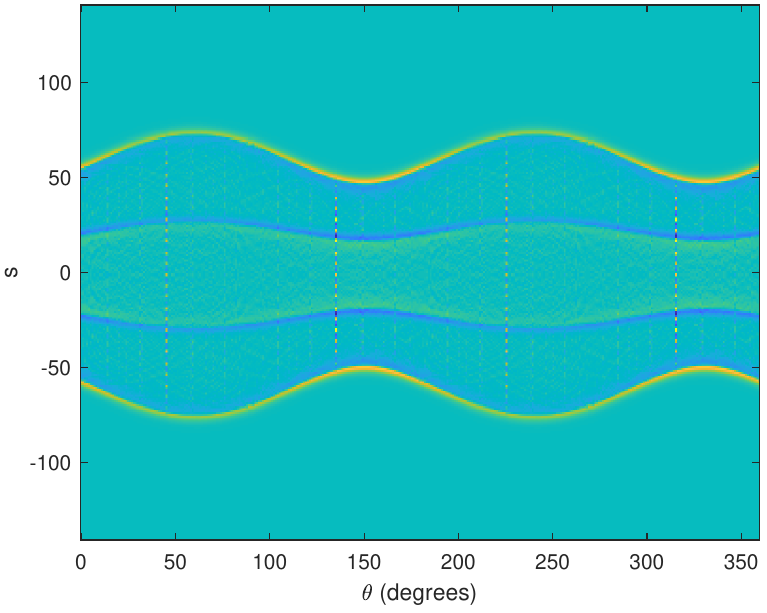}
\subcaption{$\frac{\mathrm{d}^2}{\mathrm{d}s^2}R_wf$}  \label{F4d}
\end{subfigure}
\caption{Linear and non-linear sinograms corresponding to when $f$ is the elliptic annulus phantom. We plot both the sinograms and the second derivatives in $s$ to highlight the singularities.}
\label{F4}
\end{figure}

\subsection{Recovering the density value}
In this subsection, we give an example where we recover the full support of $f$ and the the density value $n_e$ when $f = n_e \chi_\Omega$. Specifically, we consider the case when $f$ is the non-convex phantom and $n_e = 1$.
\begin{figure}
\centering
\begin{subfigure}{0.24\textwidth}
\includegraphics[width=0.9\linewidth, height=3.2cm, keepaspectratio]{NC_phantom}
\subcaption{ground truth \textcolor{white}{Phantom text}}  \label{F5a}
\end{subfigure}
\begin{subfigure}{0.24\textwidth}
\includegraphics[width=0.9\linewidth, height=3.2cm, keepaspectratio]{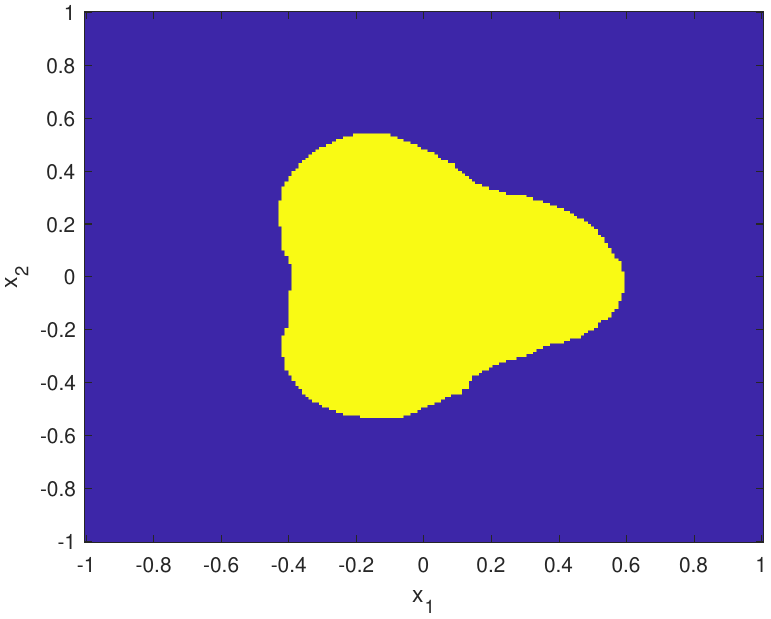}
\subcaption{FBP ($p = 98.9\%$) \textcolor{white}{Phantom text}}  \label{F5b}
\end{subfigure}
\begin{subfigure}{0.24\textwidth}
\includegraphics[width=0.9\linewidth, height=3.2cm, keepaspectratio]{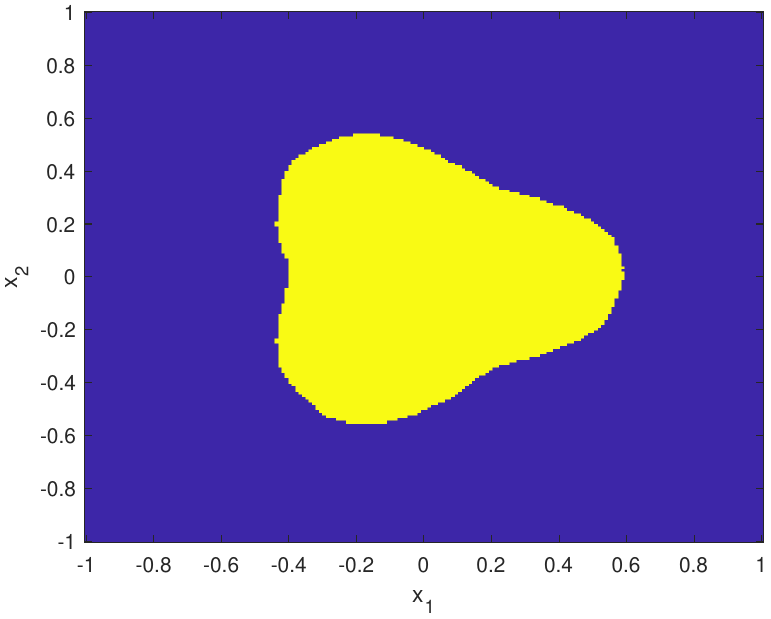}
\subcaption{Landweber ($p = 99.5\%$)} \label{F5c}
\end{subfigure}
\begin{subfigure}{0.24\textwidth}
\includegraphics[width=0.9\linewidth, height=3.2cm, keepaspectratio]{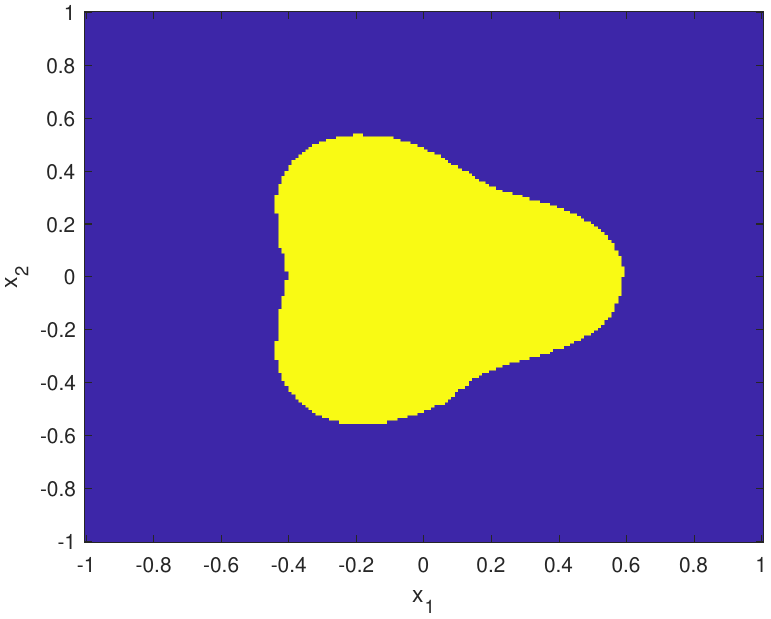}
\subcaption{TV ($p = 99.4\%$) \textcolor{white}{Phantom text}}  \label{F5d}
\end{subfigure}
\caption{Non-convex phantom support reconstructions using all methods considered when $\gamma = 0.01$. The ground truth phantom is shown in the left-hand figure for reference.}
\label{F5}
\end{figure}
See figure \ref{F5}, where we show reconstructions of the support of $f$. To calculate the support, we first approximate the continuous edge using the edge maps on the bottom row of figure \ref{F2}. To do this, we simply take averages over groups of pixels on small neighborhoods of the boundaries in figure \ref{F2}. We then fill in the continuous boundary to calculate the support. We can do this in this example as $\Omega$ is simply connected. In the supcaptions in figure \ref{F5}, we state the percentage of pixels, $p$, which are the same as those in the ground truth, assuming the images are binary. All methods work well in recovering the support, with approximately $99\%$ of $\Omega$ correctly recovered, and the Landweber and TV methods are most optimal.

Now that we have an approximation for $\Omega$, we are in a position to calculate $n_e$. To do this, we determine the $n_e$ which minimizes the least-squares residual, i.e., the $n_e$ which best matches the data given our approximation for $\Omega$.
\begin{figure}
\centering
\begin{subfigure}{0.4\textwidth}
\includegraphics[width=0.9\linewidth, height=7cm, keepaspectratio]{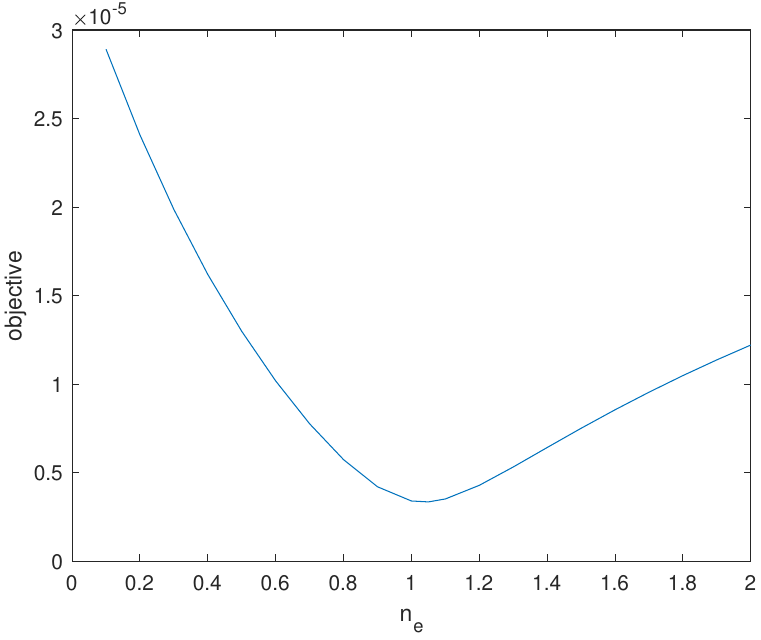}
\end{subfigure}
\caption{Plot of least squares residual for varying $n_e$, using the approximation for $\Omega$ in figure \ref{F5d} (the TV reconstruction).}
\label{F6}
\end{figure}
See figure \ref{F6}, where we plot the residual for varying $n_e$ using our approximation for $\Omega$ in figure \ref{F5d}. The residual is minimized when $n_e \approx 1.05$, which is close to the true value of $n_e = 1$ ($5\%$ error). The curve shown in figure \ref{F6} is in line with Theorem \ref{ne_thm}, as there appears to be a unique minimum when $n_e \in [0,2]$.

\section{Conclusion}
We have presented a microlocal analysis of a novel non-linear ray transform, $\mathcal{R}$, arising in CST. We derived new results on distributional products and the smoothing properties of the V-line transform, and used these to analyze the singularities of $\mathcal{R}f$. Specifically, the singularities of $\mathcal{R}f$ that are not locally in $H^1$ correspond to those of $R_wf$, i.e., the linear analog of $\mathcal{R}f$. We used this result to recover the singularities of $f$ and derive injectivity results for $\mathcal{R}$ when $f = u \chi_\Omega$, where $u > 0$ is smooth and $\Omega$ is a simply connected domain. Simulated experiments were presented to validate the theory and we provided edge map reconstructions of a non-convex and elliptic annulus phantom. In the non-convex phantom example, the edges were recovered accurately, and we used this to fully recover the support of $f$ and the density value. On the elliptic annulus phantom, where the support is not simply connected, we found the attenuation masked the internal boundary as the jump size was insignificant compared to the external boundary. The jump size does not affect Sobolev scale, and thus this does not contradict our theory. However, this is an important practical consideration. We note also that the elliptic annulus was deliberately chosen to be thick (10-15cm) and thus strongly attenuating, and this can be considered a special case when the method is less optimal.

The injectivity results presented here require smoothing kernel $\varphi \in H^{\lambda,\infty}(\mathbb{R}^2)$, where $\lambda > 1$. In future work, we aim to relax this assumption, e.g., so that $\varphi = \delta$ (which would allow a point-source model). We also wish to strengthen the results of Theorem \ref{mainprod}, in particular part (2). We hypothesize that (2) is provable when $q = p = r = 2$, which, e.g., would reduce the smoothness requirements on $g$ in this application. 

\section*{Acknowledgements:} 
The first author wishes to acknowledge funding support from Aspira Women's Health, The Cleveland Clinic Foundation, The Honorable Tina Brozman Foundation, the V Foundation, and the National Cancer Institute R03CA283252-01. Sean Holman was supported by Engineering and Physical Sciences Research Council (EPSRC) through grant EP/V007742/1. We also wish to thank The Institute for Computational and Experimental Research in Mathematics (ICERM) at Brown University for hosting us for a workshop in August 2024, which facilitated compelling discussions and advancements in this project.

\bibliographystyle{abbrv} 
\bibliography{RefRevolution}

\appendix
\section{Addtional analysis of the V-line transform}
\label{add_vline}
We have the theorem which calculates the Fourier components of $\mathcal{V}_{a,b}f$.
\begin{theorem}
The weighted V-line transform has the Fourier decomposition:
\begin{equation}
\widehat{ \mathcal{V}_{a,b} f}_k(\xi) = \int_{-\pi}^\pi  \mathcal{F}\paren{\mathcal{V}_{a,b} f}(\xi, \phi) e^{-i k \phi} \mathrm{d}\phi = \paren{a + be^{2 i k\psi}}e^{-ik(\varphi_{\xi} + \frac{\pi}{2})} \frac{\hat{f}(\xi)}{|\xi|},
\end{equation}
where $2\psi$ is the angle between the V-lines and $\xi = |\xi|(\cos\varphi_{\xi}, \sin\varphi_{\xi})$.
\begin{proof}
We have
\begin{equation}
\label{fourier}
\begin{split}
\widehat{ \mathcal{V}_{a,b} f}_k(\xi) &= \frac{1}{2\pi}\int_{\mathbb{R}^2}\int_{-\pi}^{\pi}\mathcal{V}_{a,b}f(\vy,\phi)e^{-i k \phi} e^{-i \xi \cdot \vy} \mathrm{d}\phi \mathrm{d}\vy\\
&= \hat{f}(\xi)\frac{1}{2\pi} \left[a\int_0^\infty \int_{-\pi}^{\pi}e^{-i\paren{ k\phi + t\Phi\cdot\xi }}\mathrm{d}\phi\mathrm{d}t + b\int_0^\infty \int_{-\pi}^{\pi}e^{-i\paren{ k\phi + t\Phi'\cdot\xi }}\mathrm{d}\phi\mathrm{d}t \right]\\
& = \paren{a+be^{2ik\psi}}e^{-ik(\varphi_{\xi} + \frac{\pi}{2})} \left[\int_0^\infty J_k(t|\xi|)\mathrm{d}t\right] \hat{f}(\xi)\\
& = \paren{a+be^{2ik\psi}}e^{-ik(\varphi_{\xi} + \frac{\pi}{2})} \frac{\hat{f}(\xi)}{|\xi|},
\end{split}
\end{equation}
where  $\xi = |\xi|(\cos\varphi_{\xi},\sin\varphi_{\xi})$, where $\varphi_{\xi} \in [-\pi,\pi]$, and $J_k$ is a Bessel function order $k$. The last step follows since
$$\mathcal{L}(J_k(t|\xi|)) = \int_0^\infty e^{-st}J_k(t|\xi|)\mathrm{d}t = \frac{\paren{\sqrt{s^2+|\xi|^2} - s}^k}{|\xi|^k\sqrt{s^2+|\xi|^2}},$$
for $\text{Re}\ s > 0$, where $\mathcal{L}$ is the Laplace transform. Letting $s\to 0$ shows $\int_0^\infty J_k(t|\xi|)\mathrm{d}t = \frac{1}{|\xi|}$.
\end{proof}
\end{theorem}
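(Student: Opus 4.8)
The plan is to compute the $k$-th angular Fourier coefficient of the spatial Fourier transform directly from the definition, carrying out the spatial, angular, and radial ($t$) integrations in that order and reducing each to a standard identity. Starting from
\[
\widehat{\mathcal{V}_{a,b}f}_k(\xi) = \frac{1}{2\pi}\int_{\mathbb{R}^2}\int_{-\pi}^{\pi}\mathcal{V}_{a,b}f(\vy,\phi)\,e^{-ik\phi}\,e^{-i\xi\cdot\vy}\,\mathrm{d}\phi\,\mathrm{d}\vy
\]
and substituting $\mathcal{V}_{a,b}f(\vy,\phi)=a\int_0^\infty f(\vy+t\Phi)\,\mathrm{d}t+b\int_0^\infty f(\vy+t\Phi')\,\mathrm{d}t$, the two summands have identical structure, so I would treat the $a$-term in full and recover the $b$-term by the same computation with $\Phi$ replaced by $\Phi'$.

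First I would evaluate the spatial integral for fixed $t,\phi$: the translation $\vy\mapsto\vy-t\Phi$ gives $\int_{\mathbb{R}^2}f(\vy+t\Phi)e^{-i\xi\cdot\vy}\,\mathrm{d}\vy=e^{it\,\xi\cdot\Phi}\hat f(\xi)$, pulling $\hat f(\xi)$ out and leaving only the phase $e^{it\,\xi\cdot\Phi}$. Writing $\xi=|\xi|(\cos\varphi_\xi,\sin\varphi_\xi)$ turns the dot product into $\xi\cdot\Phi=|\xi|\cos\paren{\phi+2\psi-\tfrac{\pi}{2}-\varphi_\xi}$, so the angular integral is $\int_{-\pi}^{\pi}e^{it|\xi|\cos(\phi+\beta)}e^{-ik\phi}\,\mathrm{d}\phi$ with $\beta=2\psi-\tfrac{\pi}{2}-\varphi_\xi$. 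Shifting $\phi$ and applying the Bessel integral representation $\tfrac{1}{2\pi}\int_{-\pi}^{\pi}e^{iz\cos\phi}e^{-ik\phi}\,\mathrm{d}\phi=i^kJ_k(z)$ (equivalently the Jacobi--Anger expansion) collapses this to $J_k(t|\xi|)$ times phase factors in $k,\psi,\varphi_\xi$; summing the $a$- and $b$-contributions assembles these into the prefactor $\paren{a+be^{2ik\psi}}e^{-ik(\varphi_\xi+\pi/2)}$. The remaining radial integral is then $\int_0^\infty J_k(t|\xi|)\,\mathrm{d}t=1/|\xi|$, which I would obtain as the $s\to0^+$ limit of the Laplace transform $\int_0^\infty e^{-st}J_k(t|\xi|)\,\mathrm{d}t=\paren{\sqrt{s^2+|\xi|^2}-s}^k/\paren{|\xi|^k\sqrt{s^2+|\xi|^2}}$.

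The hard part is that none of these integrals is absolutely convergent: the phase $e^{it\,\xi\cdot\Phi}$ does not decay in $t$, and $\int_0^\infty J_k(t|\xi|)\,\mathrm{d}t$ converges only conditionally, so a naive appeal to Fubini to justify reordering the $\vy$-, $\phi$-, and $t$-integrations is illegitimate. I would resolve this by inserting a regularizing factor $e^{-\epsilon t}$, which renders every integral absolutely convergent, legitimizes the interchange of integration, and lets me apply the Laplace-transform identity at $s=\epsilon$; taking $\epsilon\to0^+$ (using the conditional convergence of the Bessel integral to control the limit) then recovers the stated formula. To keep the manipulations rigorous from the outset I would first establish the identity for $f$ in the Schwartz class, where the interchanges are routine, and then extend to $f\in L^2_c(\mathbb{R}^2)$ by density.
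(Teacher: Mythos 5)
Your proposal follows essentially the same route as the paper's proof: translate in $\vy$ to extract $\hat f(\xi)$, reduce the angular integral to $J_k(t|\xi|)$ via the Jacobi--Anger/Bessel integral representation, and evaluate $\int_0^\infty J_k(t|\xi|)\,\mathrm{d}t = 1/|\xi|$ as the $s\to 0^+$ limit of the same Laplace-transform identity. Your additional care with the conditionally convergent integrals (the $e^{-\epsilon t}$ regularization and the Schwartz-class-then-density argument) is a rigor refinement the paper omits, not a different method.
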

An interesting corollary of this is $\mathcal{V}_{a,b}f_0(\vy) = (a+b) f \ast \paren{\frac{1}{|\vy|}} = (a+b)R_h^*R_hf(\vy),$ where $R_h$ is the classical hyperplane Radon transform in 2-D. We do not need this result for our analysis but this may be of interest to other researchers who study the V-line transform as it provides an inverse for $\mathcal{V}_{a,b}$. 

The above theorem implies that the fourier components of $\mathcal{V}_{a,b} f$ decay faster  (due to the division by $|\xi|$), at least in the $\xi$ variable which is dual to $\vx$. However, the series components in the $\phi$ direction do not decay at all, which makes it difficult to determine global smoothing estimates. Theorem \ref{V_smooth} in the main text rectifies this somewhat, when the length of the V-line rays is finite, as in this case, the V-line transform is compactly supported in the $\vx$ variable.

\end{document}